\newcommand{\defeq}{\coloneqq}
\DeclarePairedDelimiter{\Norm}{\|}{\|}
\DeclarePairedDelimiterX{\lNorm}[2]{\|}{\|_{#2}}{#1}
\DeclarePairedDelimiter{\abs}{\lvert}{\rvert}
\newcommand{\T}{\intercal}
\DeclarePairedDelimiterX{\innerangle}[2]{\langle}{\rangle}{#1, #2}
\DeclarePairedDelimiter{\ceil}{\lceil}{\rceil}
\newcommand{\vzero}{\mathbf{0}}
\newcommand{\vone}{\mathbf{1}}
\newcommand{\va}{\mathbf{a}}
\newcommand{\vd}{\mathbf{d}}
\newcommand{\vg}{\mathbf{g}}
\newcommand{\vs}{\mathbf{s}}
\newcommand{\vx}{\mathbf{x}}
\newcommand{\vy}{\mathbf{y}}
\newcommand{\fI}{\mathcal{I}}
\newcommand{\fO}{\mathcal{O}}
\newcommand{\BN}{\mathbb{N}}
\newcommand{\BR}{\mathbb{R}}
\newcommand{\BS}{\mathbb{S}}
\newcommand{\mA}{\mathbf{A}}
\newcommand{\mB}{\mathbf{B}}
\newcommand{\mI}{\mathbf{I}}
\newcommand{\mP}{\mathbf{P}}
\DeclareMathOperator*{\argmin}{arg\,min}
\DeclareMathOperator{\Tr}{Tr}
\theoremstyle{plain}
\newtheorem{thm}{Theorem}%[section]
\newtheorem{lem}{Lemma}
\newtheorem{asm}{Assumption}
\newtheorem{remark}{Remark}
\newtheorem{prop}{Proposition}
\newcommand{\Ddots}{\mathinner{\mkern1mu\raise\p@
\vbox{\kern7\p@\hbox{.}}\mkern2mu
\raise4\p@\hbox{.}\mkern2mu\raise7\p@\hbox{.}\mkern1mu}}
\newcommand*{\rom}[1]{\expandafter\@slowromancap\romannumeral #1@}
\newcommand{\biggg}{\bBigg@{3}}
\newcommand{\bigggl}{\mathopen\biggg}
\newcommand{\bigggr}{\mathclose\biggg}
\newcommand{\Biggg}{\bBigg@{3.5}}
\newcommand{\bigggg}{\bBigg@{4}}
\newcommand{\Bigggg}{\bBigg@{4.5}}
\crefname{equation}{}{}
\newcommand{\hvg}{\hat{\vg}}
\newcommand{\hvs}{\hat{\vs}}
\newcommand{\hvy}{\hat{\vy}}
\newcommand{\hvd}{\hat{\vd}}
\newcommand{\hmB}{\hat{\mB}}
\newcommand{\hatm}{\hat{m}}
\newcommand{\hq}{\hat{q}}
\newcommand{\htheta}{\hat{\theta}}
\newcommand{\bmB}{\bar{\mB}}
\newcommand{\ttt}{\tilde{t}}
\newcommand{\tgamma}{\tilde{\gamma}}
\newcommand{\tM}{\tilde{M}}
\newcommand{\tmB}{\tilde{\mB}}
\newcommand{\thh}{\tilde{h}}
\newcommand{\tH}{\tilde{H}}
\def\SABFGS{{SA$^2$-BFGS}}
\newcommand{\diff}[2]{\frac{\mathrm{d}}{\mathrm{d} #2} #1}
\newcommand{\evaldiff}[3]{\left. \frac{\mathrm{d}}{\mathrm{d} #2} #1 \right|_{#2 = #3}}
\newcommand{\eval}[2]{\left. #1 \right|_{#2}}
\title{Explicit Global Convergence Rates of BFGS without Line Search}
\author{Jianjiang Yu\textsuperscript{1},\quad 
Weiguo Gao\textsuperscript{1,2,3},\quad 
and Luo Luo\textsuperscript{2} \\[2ex]
\textsuperscript{1}School of Mathematical Sciences, Fudan University, Shanghai, China \\
\textsuperscript{2}School of Data Science, Fudan University, Shanghai, China \\
\textsuperscript{3}Shanghai Key Laboratory of Contemporary Applied Mathematics, Shanghai, China \\[2ex]
\texttt{jjyu22@m.fudan.edu.cn, wggao@fudan.edu.cn, and luoluo@fudan.edu.cn}
} 
\date{\today \\}
\begin{document}

\maketitle

\begin{abstract}
This paper studies the convergence rates of the Broyden--Fletcher--Goldfarb--Shanno~(BFGS) method without line search.
We show that the BFGS method with an adaptive step size [Gao and Goldfarb, Optimization Methods and Software, 34(1):194-217, 2019] exhibits a two-phase non-asymptotic global convergence behavior when minimizing a strongly convex function, i.e., a linear convergence rate of $\mathcal{O}((1 - 1 / \varkappa)^{k})$ in the first phase and a superlinear convergence rate of $\mathcal{O}((\varkappa / k)^{k})$ in the second phase, where $k$ is the iteration counter and $\varkappa$ is the condition number.
In contrast, the existing analysis only establishes asymptotic results.
Furthermore, we propose a novel adaptive BFGS method without line search, which allows a larger step size by taking the gradient Lipschitz continuity into the algorithm design.
We prove that our method achieves faster convergence when the initial point is far away from the optimal solution. 
\end{abstract}

\section{Introduction}

This paper considers the unconstrained minimization problem 
\begin{equation}\label{prob:main}
    \min_{\vx \in \BR^{n}} f(\vx),
\end{equation}
where $f: \BR^{n} \to \BR$ is strongly convex and has both Lipschitz continuous gradient and Hessian. 
We focus on quasi-Newton methods which approximate the second-order information by a low-rank modification during iterations, avoiding the expensive computational cost of evaluating and inverting the exact Hessian. 
Popular quasi-Newton methods such as Broyden--Fletcher--Goldfarb--Shanno~(BFGS)~\cite{broydenQuasiNewtonMethodsTheir1967, fletcherNewApproachVariable1970, goldfarbFamilyVariablemetricMethods1970,shannoConditioningQuasiNewtonMethods1970}, Davidon--Fletcher--Powell~
(DFP)~\cite{davidonVariableMetricMethod1991, fletcherRapidlyConvergentDescent1963}, and Symmetric Rank-1~(SR1)~\cite{connConvergenceQuasiNewtonMatrices1991, khalfanTheoreticalExperimentalStudy1993}
have been extensively studied and widely applied in scientific computation~\cite{pfrommerRelaxationCrystalsQuasiNewton1997, schlegelExploringPotentialEnergy2003, virieuxOverviewFullwaveformInversion2009, liuQuasiNewtonMethodsRealtime2017, haberQuasiNewtonMethodsLargescale2004, goldfarbPracticalQuasiNewtonMethods2020, zdunekNonnegativeMatrixFactorization2006}. 

It is well-known that asymptotic superlinear local convergence rates of classical quasi-Newton methods can be achieved by the Dennis--Mor{\'e} characterization~\cite{dennisCharacterizationSuperlinearConvergence1974, griewankLocalConvergenceAnalysis1982, yuanModifiedBFGSAlgorithm1991},
whereas the explicit convergence rates were established only in recent years.
Specifically, \citet{jinNonasymptoticSuperlinearConvergence2023}, as well as \citet{rodomanovRatesSuperlinearConvergence2022, rodomanovNewResultsSuperlinear2021} were the first to provide non-asymptotic superlinear local convergence rates for classical BFGS and DFP methods.
Later, \citet{yeExplicitSuperlinearConvergence2023} established non-asymptotic results for the classical SR1 method.
In parallel, another line of research studied quasi-Newton methods with greedy and random directions, also showing provably explicit superlinear rates~\cite{rodomanovGreedyQuasiNewtonMethods2021, jinSharpenedQuasiNewtonMethods2022, linExplicitConvergenceRates2022, jiGreedyPSBMethods2023, liuSymmetricRank$k$Methods2024,liu2024incremental}. 
However, all the above results rely on the unit step size during iteration and only work when the initial point lies in a local neighborhood of the optimum.

Line search strategy is a popular way to establish global guarantees of quasi-Newton methods~\cite{powell1976some, byrdGlobalConvergenceClass1987,byrdToolAnalysisQuasiNewton1989}.
Recent analysis shows that incorporating line search steps into the quasi-Newton iteration can achieve explicit global convergence rates~\cite{krutikovConvergenceRateQuasiNewton2023, rodomanovGlobalComplexityAnalysis2024,  jinNonasymptoticGlobalConvergence2024b,jiangOnlineLearningGuided2023,jinNonasymptoticGlobalConvergence2025}.
In addition, the trust region~\cite{byrdAnalysisSymmetricRankone1996,jiGreedyPSBMethods2023} and cubic regularization methods~\cite{kamzolovCubicRegularizationKey2023,wangGlobalNonasymptoticSuperlinear2024} are also used to provide global convergence guarantees.
However, the iteration of the above strategies requires a subroutine to determine the step size or solve the sub-problem, leading to additional computational cost and complicated implementations.
In practice, we prefer simple and provable optimization algorithms.
The seminal work of \citet{gaoQuasiNewtonMethodsSuperlinear2019} proposed an adaptive step size rule for the BFGS method to achieve both a global convergence guarantee and local superlinear rates without line search, while their results are asymptotic.
Recently, \citet{wangGlobalNonasymptoticSuperlinear2024} introduced a correction strategy based on the gradient norm, achieving a modified SR1 method with global non-asymptotic convergence rates.

In this paper, we focus on the convergence of BFGS methods without line search.
We first revisit the adaptive BFGS proposed by \citet{gaoQuasiNewtonMethodsSuperlinear2019}. 
Compared with the original asymptotic analysis, we show that this method enjoys global non-asymptotic convergence, i.e., the linear convergence rate of $\fO((1 - 1 / (6 \varkappa))^{k})$ in the first phase and the superlinear convergence rate of $\fO((D_{1} / k)^{k})$ in the second phase, where $k$ is the iteration counter, $\varkappa$ is the condition number, and $D_{1}$ depends on properties of the objective and the initialization. 
Furthermore, we propose a smoothness aided adaptive BFGS method, which iterates with a larger step size by considering the gradient Lipschitz continuity.
Our theoretical analysis shows that such a strategy still preserves the linear and superlinear non-asymptotic convergence.
Additionally, the larger step size allows our method to enter the phase with the linear convergence rate of $\fO((1 - 1 / (2 \varkappa))^{k})$ earlier when the initial point is far away from the optimal solution.
We summarize our theoretical results in Table~\ref{tab:rates}.

\begin{table}[t]
    \centering
    \resizebox{\linewidth}{!}{
    \begin{tabular}{|Sc|Sc|Sc|Sc|}
    \hline
    Method & Phase & Convergence Rate & Starting Moment \\  \hline
    \multirow{2}[5]{*}{\makecell[c]{Algorithms~\ref{alg:ada_bfgs} and~\ref{alg:ada_bfgs_improved}}} & \makecell{Linear Phase \\ (Theorem~\ref{thm:linear_phase_linear_rates})} & $\fO\Biggl(\biggl(1 - \dfrac{1}{6 \varkappa}\biggr)^{k}\Biggr)$ & $ \tilde{\fO}\Bigl(\Psi(\bmB_{0}) + \min\bigl\{M^{2} \Delta, \varkappa M\sqrt{\Delta}\bigr\}\Bigr)$ \\ \cline{2-4}
    & \makecell{Superlinear Phase \\ (Theorem~\ref{thm:superlinear_phase_superlinear_rates})} &     $\fO\Biggl(\biggl(\dfrac{D_{1}}{k}\biggr)^{k}\Biggr)$ & $D_{1} \defeq \tilde{\fO}\Bigl(\Psi(\tmB_{0}) + \tM \sqrt{\Delta}\bigl(\Psi(\bmB_{0}) + \varkappa + \min\bigl\{M^{2} \Delta,  \varkappa M \sqrt{\Delta}\bigr\}\bigr)\Bigr)$ \\ \hline
    \multirow{3}[12]{*}{\makecell[c]{Algorithm~\ref{alg:ada_bfgs_improved}}} & \makecell{Linear Phase $\mathrm{I}$ \\ (Theorem~\ref{thm:linear_phase_linear_rates_1_improved})} & $\fO\Biggl(\biggl(1 - \dfrac{1}{2 \varkappa \min\{M^{2} \Delta, \varkappa\}} \biggr)^{k} \Biggr)$ & $  \fO\Bigl(\Psi(\bmB_{0})\Bigr)$ \\ \cline{2-4}
    & \makecell{Linear Phase $\mathrm{II}$ \\ (Theorem~\ref{thm:linear_phase_linear_rates_2_improved})} & $\fO\Biggl(\biggl(1 - \dfrac{1}{2 \varkappa}\biggr)^{k}\Biggr)$ & $\tilde{\fO}\Bigl(\Psi(\bmB_{0}) + \min\bigl\{\varkappa M^{2} \Delta, \varkappa^{2}\bigr\}\Bigr)$ \\ \cline{2-4}
    & \makecell{Superlinear Phase \\ (Theorem~\ref{thm:superlinear_phase_superlinear_rates_improved})} & $\fO\Biggl(\biggl(\dfrac{D_{2}}{k}\biggr)^{k}\Biggr)$ & $
        D_{2} \defeq \fO\Bigl(\Psi(\tmB_{0}) + 
        \tM \sqrt{\Delta} \bigl(\Psi(\bmB_{0}) + \min\bigl\{\varkappa M^{2} \Delta, \varkappa^{2}\bigr\}\bigr)\Bigr)$ \\ \hline
    \end{tabular}}
    \caption{\small We summarize our explicit convergence rates for adaptive BFGS method~(Algorithm~\ref{alg:ada_bfgs})~\cite{gaoQuasiNewtonMethodsSuperlinear2019} and for the variants proposed in this work~(Algorithm~\ref{alg:ada_bfgs_improved}). 
    Here, we denote 
    $\Psi(\mA) = \Tr(\mA) - \ln \det(\mA) - n$, 
    $\bmB_{0} = \mB_{0} / L$, $\tM = L_{2}/({2 \mu^{{3}/{2}}})$, $\tmB_{0} = \nabla^{2} f(\vx_{*})^{-{1}/{2}} \mB_{0} \nabla^{2} f(\vx_{*})^{-{1}/{2}}$, $\varkappa=L / \mu$, and $\Delta = f(\vx_{0}) - f(\vx_*)$, 
    where $\mB_{0}$ is the Hessian estimator at the initial point $\vx_0$, 
    $L$ is the gradient Lipschitz parameter, 
    $L_{2}$ is the Hessian Lipschitz parameter,
    and $M$ is the self-concordant parameter which may be smaller than $\tM$. 
    }\label{tab:rates}
\end{table}

\section{Preliminaries}\label{sec:pre}

In this section, we present the required assumptions, briefly review the BFGS update, and introduce the form of the adaptive step size, whose derivation will be provided in Section~\ref{sec:ada_stepsize}. We conclude the section by summarizing the complete algorithm.

\subsection{Notations and Assumptions}

We use $\Norm{\cdot}$ to denote the Euclidean norm for vectors and the spectral norm for matrices, respectively. We let $\BS_{++}^{n}$ be the set of $n \times n$ symmetric positive definite matrices and let $\mI$ be the $n \times n$ identity matrix. 
The trace and determinant are denoted by $\Tr(\cdot)$ and $\det (\cdot)$, respectively. 
For a twice differentiable and convex objective $f:\BR^{n} \to \BR$, we define $\lNorm{\vd}{\vx} \defeq (\vd^{\T} \nabla^{2} f(\vx) \vd)^{1/2}$ as the weighted norm of a vector $\vd \in \BR^{n}$ with respect to the Hessian at point $\vx \in \BR^{n}$. 
Additionally, the unique minimizer of the objective $f$ is denoted by $\vx_{*}$.

We first impose the assumptions of strong convexity and a Lipschitz continuous gradient as follows.

\begin{asm}\label{asm:strongly_convex}
We suppose the function $f:\BR^{n} \to \BR$ is $\mu$-strongly convex with  $\mu>0$, i.e., for all $\vx, \vy \in \BR^{n}$ and $\lambda\in[0,1]$, we have 
\begin{align*}
    f(\lambda \vx + (1 - \lambda) \vy) \leq \lambda f(\vx) + (1 - \lambda) f(\vy) - \frac{\lambda(1 - \lambda) \mu}{2}\Norm{\vx - \vy}^2.
\end{align*}
\end{asm}
\begin{asm}\label{asm:grad_Lip}

We suppose the function $f:\BR^{n} \to \BR$ has an $L$-Lipschitz continuous gradient with $L > 0$, i.e., for all $\vx, \vy \in \BR^{n}$, we have 
\begin{align*}
    \Norm{\nabla f(\vx)-\nabla f(\vy)}\leq L\Norm{\vx-\vy}.
\end{align*}
\end{asm}
Consequently, we define the condition number of $f$ as $\varkappa \defeq {L}/{\mu}$. 
We then consider the assumptions of self-concordance and a Lipschitz continuous Hessian, which are commonly used in the study of second-order optimization.

\begin{asm}\label{asm:self_con}
We suppose the convex function $f:\BR^{n} \to \BR$ is $M$-self-concordant with $M > 0$, i.e., for all $\vx, \vd \in \BR^{n}$, we have
\begin{equation*}
    \abs*{\nabla^{3} f(\vx) [\vd, \vd, \vd]} \leq 2 M \lNorm{\vd}{\vx}^{3}.
\end{equation*}
In particular, we say $f$ is standard self-concordant if $M = 1$. 
\end{asm}

\begin{asm}\label{asm:Hessian_Lip}
We suppose the function $f:\BR^{n} \to \BR$ has an $L_{2}$-Lipschitz continuous Hessian with $L_{2} > 0$, i.e., for all $\vx, \vy \in \BR^{n}$, we have
\begin{equation*}
    \Norm*{\nabla^{2} f(\vx) - \nabla^{2} f(\vy)} \leq L_{2} \Norm{\vx - \vy}.
\end{equation*}
\end{asm}

The above assumptions have the following relationships. 

\begin{lem}[{\cite[Example 4.1]{rodomanovGreedyQuasiNewtonMethods2021}}]\label{lem:Hessian_Lip2self_con}
Under Assumptions~\ref{asm:strongly_convex} and~\ref{asm:Hessian_Lip}, 
the function $f$ is self-concordant with parameter $L_{2}/(2\mu^{3/2})$.
\end{lem}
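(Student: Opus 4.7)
The plan is to directly verify the self-concordance inequality by bounding the third directional derivative via the Hessian Lipschitz constant, and then converting the Euclidean norm to the local Hessian norm using strong convexity.

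First, I would express the third directional derivative as a limit of a second-difference quotient of the Hessian:
\begin{equation*}
    \nabla^{3} f(\vx)[\vd, \vd, \vd] = \lim_{t \to 0} \frac{\vd^{\T}\bigl(\nabla^{2} f(\vx + t \vd) - \nabla^{2} f(\vx)\bigr) \vd}{t}.
\end{equation*}
This is the standard definition of the third derivative as a symmetric trilinear form acting on $(\vd, \vd, \vd)$; one gets it by differentiating $t \mapsto \vd^{\T} \nabla^{2} f(\vx + t \vd) \vd$ at $t = 0$.

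Next, I would apply Cauchy--Schwarz and Assumption~\ref{asm:Hessian_Lip} to the numerator:
\begin{equation*}
    \abs*{\vd^{\T}\bigl(\nabla^{2} f(\vx + t \vd) - \nabla^{2} f(\vx)\bigr) \vd}
    \leq \Norm*{\nabla^{2} f(\vx + t \vd) - \nabla^{2} f(\vx)} \Norm{\vd}^{2}
    \leq L_{2} \abs{t} \Norm{\vd}^{3}.
\end{equation*}
Dividing by $|t|$ and passing to the limit yields $\abs{\nabla^{3} f(\vx)[\vd, \vd, \vd]} \leq L_{2} \Norm{\vd}^{3}$.

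Finally, I would invoke Assumption~\ref{asm:strongly_convex}, which gives $\nabla^{2} f(\vx) \succeq \mu \mI$, so that $\Norm{\vd}^{2} \leq \mu^{-1} \lNorm{\vd}{\vx}^{2}$. Cubing this relation and combining with the previous bound produces
\begin{equation*}
    \abs*{\nabla^{3} f(\vx)[\vd, \vd, \vd]} \leq \frac{L_{2}}{\mu^{3/2}} \lNorm{\vd}{\vx}^{3} = 2 \cdot \frac{L_{2}}{2 \mu^{3/2}} \lNorm{\vd}{\vx}^{3},
\end{equation*}
which matches Assumption~\ref{asm:self_con} with $M = L_{2}/(2 \mu^{3/2})$. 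There is no real obstacle here; the only delicate step is justifying the limit representation of the third derivative, which is routine once $f$ is assumed sufficiently smooth (implicit in having a Lipschitz Hessian, so $f \in C^{2}$ and $\nabla^{3} f$ exists almost everywhere in a suitable weak sense; for the self-concordance inequality, the directional third derivative suffices).
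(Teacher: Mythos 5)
Your proof is correct. The paper itself gives no proof of this lemma (it defers entirely to the cited Example~4.1 of Rodomanov and Nesterov), and your argument — bounding the third directional derivative by $L_{2}\Norm{\vd}^{3}$ via the difference quotient of the Hessian, then converting to the local norm using $\nabla^{2}f(\vx)\succeq\mu\mI$ — is exactly the standard derivation in that reference.
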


\begin{lem}[{\cite[Theorem 6.7]{gaoQuasiNewtonMethodsSuperlinear2019}}]\label{lem:self_con2Hessian_Lip}
Under Assumptions~\ref{asm:grad_Lip} and~\ref{asm:self_con}, 
the function $f$ has a $2 M L^{3/2}$-Lipschitz continuous Hessian.
\end{lem}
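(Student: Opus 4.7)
The goal is to bound $\Norm{\nabla^{2} f(\vx) - \nabla^{2} f(\vy)}$ by $2 M L^{3/2} \Norm{\vx - \vy}$. The plan is to convert the pointwise bound on the third derivative provided by self-concordance into a bound on $\Norm{\nabla^{2} f(\vx) - \nabla^{2} f(\vy)}$ through an integral representation, using the gradient-Lipschitz assumption to relate the weighted norm $\lNorm{\cdot}{\vz}$ to the Euclidean norm.

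First, I would upgrade Assumption~\ref{asm:self_con} from the ``diagonal'' trilinear bound $\abs{\nabla^{3} f(\vz)[\vd, \vd, \vd]} \leq 2 M \lNorm{\vd}{\vz}^{3}$ to the full multilinear bound
\begin{equation*}
\abs*{\nabla^{3} f(\vz)[\vu, \vv, \vw]} \leq 2 M \lNorm{\vu}{\vz} \lNorm{\vv}{\vz} \lNorm{\vw}{\vz}
\quad\text{for all } \vu, \vv, \vw \in \BR^{n}.
\end{equation*}
Since $\nabla^{3} f(\vz)$ is a symmetric trilinear form, this extension is a standard consequence of polarization (or equivalently, of the fact that a symmetric $k$-linear form on a Hilbert space and its induced homogeneous polynomial have the same supremum over the unit ball of the relevant norm $\lNorm{\cdot}{\vz}$, once one works in the inner product induced by $\nabla^{2} f(\vz)$).

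Next, Assumption~\ref{asm:grad_Lip} yields $\nabla^{2} f(\vz) \preceq L \mI$ for every $\vz \in \BR^{n}$, which gives the elementary comparison $\lNorm{\vd}{\vz} \leq \sqrt{L} \Norm{\vd}$. Substituting this into the multilinear bound above produces
\begin{equation*}
\abs*{\nabla^{3} f(\vz)[\vu, \vv, \vw]} \leq 2 M L^{3/2} \Norm{\vu} \Norm{\vv} \Norm{\vw}.
\end{equation*}

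Finally, for any $\vx, \vy \in \BR^{n}$ and any unit vector $\vu$, I would write
\begin{equation*}
\vu^{\T}\bigl(\nabla^{2} f(\vx) - \nabla^{2} f(\vy)\bigr) \vu
= \int_{0}^{1} \nabla^{3} f\bigl(\vy + t(\vx - \vy)\bigr)[\vx - \vy, \vu, \vu]\, \mathrm{d} t
\end{equation*}
and apply the trilinear bound inside the integral to obtain $\abs{\vu^{\T}(\nabla^{2} f(\vx) - \nabla^{2} f(\vy)) \vu} \leq 2 M L^{3/2} \Norm{\vx - \vy}$. Since $\nabla^{2} f(\vx) - \nabla^{2} f(\vy)$ is symmetric, taking the supremum over $\Norm{\vu} = 1$ identifies the left-hand side with the spectral norm, yielding the claimed $2 M L^{3/2}$-Lipschitz continuity. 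The only non-routine step is the multilinear extension in the first stage; the rest is a standard fundamental-theorem-of-calculus computation.
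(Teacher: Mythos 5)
Your proposal is correct. Note that the paper does not actually prove this lemma---it is imported verbatim from \citet[Theorem~6.7]{gaoQuasiNewtonMethodsSuperlinear2019}---so there is no in-paper argument to compare against; your proof (multilinear extension of the self-concordance bound via the symmetric-form/polarization result, the comparison $\lNorm{\vd}{\vz} \leq \sqrt{L}\Norm{\vd}$ from $\nabla^{2} f \preceq L\mI$, and the fundamental-theorem-of-calculus representation of $\nabla^{2}f(\vx)-\nabla^{2}f(\vy)$) is the standard route and matches how the cited source establishes the result. The one step you rightly flag as non-routine, the extension from the diagonal bound to the full trilinear bound, is exactly \citet[Lemma~5.1.2]{nesterovLecturesConvexOptimization2018} (valid for the seminorm induced by the positive semidefinite $\nabla^{2}f(\vz)$), so the argument is complete.
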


We define $\tM \defeq {L_2}/(2 \mu^{3/2})$.
Note that the self-concordant parameter obtained by Lemma~\ref{lem:Hessian_Lip2self_con} may not be tight.
In other words, under Assumptions~\ref{asm:strongly_convex} and~\ref{asm:Hessian_Lip}, the function $f$ may be $M$-self-concordant with $M\leq \tM$, e.g., the logistic loss~\cite{zhangCommunicationefficientDistributedOptimization2018} and the regularized log-sum-exp function~\cite{rodomanovGreedyQuasiNewtonMethods2021}. 

\subsection{BFGS Methods}

Quasi-Newton methods solve the minimization problem (\ref{prob:main}) by the iteration scheme
\begin{equation}\label{eq:quasi_update}
\begin{cases}
\vd_{k} = - \mB_{k}^{-1} \vg_{k}, \\
\vx_{k + 1} = \vx_{k} + t_{k} \vd_{k}, 
\end{cases}   
\end{equation}
where $\vg_{k} = \nabla f(\vx_{k})$, $\mB_{k}\in\BR^{n\times n}$ is the estimator of $\nabla^2 f(\vx)$, and $t_{k}$ is the step size.
We focus on BFGS methods~\cite{broydenQuasiNewtonMethodsTheir1967, fletcherNewApproachVariable1970, goldfarbFamilyVariablemetricMethods1970,shannoConditioningQuasiNewtonMethods1970} which update the Hessian estimator by
\begin{equation}\label{eq:bfgs_update}
    \mB_{k + 1} = \mB_{k} - \frac{\mB_{k} \vs_{k} \vs_{k}^{\T} \mB_{k}}{\vs_{k}^{\T} \mB_{k} \vs_{k}} + \frac{\vy_{k} \vy_{k}^{\T}}{\vy_{k}^{\T} \vs_{k}},
\end{equation}
where $\vs_{k} = \vx_{k + 1} - \vx_{k}$ and $\vy_{k} = \vg_{k + 1} - \vg_{k}$.
Note that we can apply the Sherman--Morrison--Woodbury formula~\cite{hendersonDerivingInverseSum1981} to access $\mB_{k+1}^{-1}$ with a computational cost of $\fO(n^2)$ given $\mB_k^{-1}$, since BFGS performs the rank-two update \eqref{eq:bfgs_update}. It is well known that for strongly convex objective functions, BFGS methods ensure that the matrix $\mB_{k}$ is positive definite for all $k \geq 1$ if the initial estimator $\mB_{0}$ is positive definite.
It is known that the BFGS iteration \eqref{eq:quasi_update}--\eqref{eq:bfgs_update} with $t_k\equiv 1$ enjoys non-asymptotic local convergence rates~\cite{jinNonasymptoticSuperlinearConvergence2023,rodomanovRatesSuperlinearConvergence2022, rodomanovNewResultsSuperlinear2021}, 
while global convergence cannot be guaranteed by taking the unit step size.

The convergence analysis of BFGS is typically based on the following affine transformations. 
For any given weight matrix $\mP\in\BS_{++}^{n}$, we define
\begin{align}
    \hvg_{k} = \mP^{-\frac{1}{2}} \vg_{k}, \quad \hvs_{k} = \mP^{\frac{1}{2}} \vs_{k}, \quad \hvy_{k} = \mP^{-\frac{1}{2}} \vy_{k}, \quad \hvd_{k} = \mP^{\frac{1}{2}} \vd_{k},  \quad \text{and} \quad
    \hmB_{k} = \mP^{-\frac{1}{2}} \mB_{k} \mP^{-\frac{1}{2}}. \label{def:wgt_vec}
\end{align}

It is well known that BFGS methods are invariant under affine transformations, i.e., 
\begin{align}\label{eq:affine-inv}
    \hvg_{k}^{\T} \hvs_{k} = \vg_{k}^{\T} \vs_{k}, \quad \hvy_{k}^{\T} \hvs_{k} = \vy_{k}^{\T} \vs_{k}, \quad \text{and} \quad
    \hmB_{k + 1} = \hmB_{k} - \frac{\hmB_{k} \hvs_{k} \hvs_{k}^{\T} \hmB_{k}}{\hvs_{k}^{\T} \hmB_{k} \hvs_{k}} + \frac{\hvy_{k} \hvy_{k}^{\T}}{\hvy_{k}^{\T} \hvs_{k}}.
\end{align}

\begin{algorithm}[ht]
    \caption{Adaptive BFGS~(c.f.\cite{gaoQuasiNewtonMethodsSuperlinear2019})}
    \label{alg:ada_bfgs}
    \begin{algorithmic}[1]
        \STATE $\vx_{0} \in \BR^{n}$, $M > 0$, $\mB_{0} \succ \vzero$ \\[0.15cm]
        \STATE \textbf{for} $k = 0, 1, 2, \dots$ $\textbf{do}$ \\[0.15cm]
        \STATE \quad $\vg_{k}=\nabla f(\vx_{k})$, \ $\vd_{k} = - \mB_{k}^{-1} \vg_k$ \\[0.15cm]
        \STATE \quad \label{line:tk} $t_{k} = -\dfrac{\vg_{k}^{\T}\vd_{k}}{\lNorm{\vd_{k}}{\vx_{k}} (\lNorm{\vd_{k}}{\vx_{k}} - M \vg_{k}^{\T} \vd_{k})}$ \\[0.15cm]
        \STATE \quad $\vx_{k + 1} = \vx_{k} + t_{k} \vd_{k}$ \\[0.15cm]
        \STATE \quad $\vs_{k} = t_{k} \vd_{k}$, \ $\vy_{k} = \vg_{k+1} - \vg_k$ \\[0.15cm]
        \STATE \quad $\mB_{k + 1} = \mB_{k} - \dfrac{\mB_{k} \vs_{k} \vs_{k}^{\T} \mB_{k}}{\vs_{k}^{\T} \mB_{k} \vs_{k}} + \dfrac{\vy_{k} \vy_{k}^{\T}}{\vy_{k}^{\T} \vs_{k}}$ \\[0.15cm]
        \STATE \textbf{end for}
    \end{algorithmic}
\end{algorithm}
We also define the quantities $\htheta_{k}$ and $\hatm_{k}$ by
\begin{equation}\label{def:theta_m}
\cos(\htheta_{k}) \defeq \frac{-\hvg_{k}^{\T} \hvs_{k}}{\Norm{\hvg_{k}} \Norm{\hvs_{k}}}
 \quad \text{and} \quad
\hatm_{k} \defeq \frac{\hvy_{k}^{\T} \hvs_{k}}{\Norm{\hvs_{k}}^{2}}.
\end{equation}
For a given $\mA \in \BS_{++}^{n}$, we introduce the potential function:
\begin{equation}\label{def:potential_func}
    \Psi(\mA) \defeq \Tr(\mA) - \ln \det(\mA) - n, 
\end{equation}
which can be viewed as the Bregman distance between $\mA$ and the identity matrix $\mI$ with respect to the function $\Phi(\mA) = - \ln \det(\mA)$.
The following result has been widely employed in the convergence rate analysis of BFGS~\cite{nocedalNumericalOptimization2006, jinNonasymptoticGlobalConvergence2025, jinNonasymptoticGlobalConvergence2024b, jinAffineinvariantGlobalNonasymptotic2025}. 

\begin{prop}[{\cite[Proposition~2]{jinNonasymptoticGlobalConvergence2025}}]\label{prop:potential_func_upper}
Let $\{\mB_{k}\}_{k \geq 0}$ be the Hessian approximation matrices generated by the BFGS update in \eqref{eq:bfgs_update},
then we have
\begin{equation*}
    \Psi(\hmB_{k + 1}) \leq \Psi(\hmB_{k}) + \frac{\Norm{\hvy_{k}}^{2}}{\hvs_{k}^{\T} \hvy_{k}} - 1 + \ln \frac{\cos^{2} \htheta_{k}}{\hatm_{k}}
\end{equation*}
for all $k\geq 0$, where $\hat\mB_k$, $\htheta_{k}$ and $\hatm_{k}$ follow the definitions in \cref{def:wgt_vec,eq:affine-inv,def:theta_m} with any $\mP \in \BS_{++}^{n}$. Furthermore, for all $k \geq 1$, we have
\begin{equation}\label{eq:acc_potential_func}
    \sum_{i = 0}^{k - 1} \ln \frac{\cos^{2}(\htheta_{i})}{\hatm_{i}} \geq - \Psi(\hmB_{0}) + \sum_{i = 0}^{k - 1} \left( 1 - \frac{\Norm{\hvy_{i}}^{2}}{\hvs_{i}^{\T} \hvy_{i}} \right).
\end{equation}
\end{prop}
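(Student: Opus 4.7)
The plan is to write $\Psi(\hmB_{k+1})-\Psi(\hmB_k)$ as an exact expression using the classical trace and determinant identities for the BFGS update, and then bound the resulting residual terms by $-1$ via the elementary inequality $\ln u\le u-1$. The accumulated bound \eqref{eq:acc_potential_func} follows by telescoping and invoking $\Psi(\hmB_k)\ge 0$.

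First, I would unwind the definitions in the hat-space. Linearity of the trace applied to the BFGS update in \eqref{eq:affine-inv} immediately gives
\begin{equation*}
\Tr(\hmB_{k+1})=\Tr(\hmB_k)-\frac{\Norm{\hmB_k\hvs_k}^{2}}{\hvs_k^{\T}\hmB_k\hvs_k}+\frac{\Norm{\hvy_k}^{2}}{\hvs_k^{\T}\hvy_k},
\end{equation*}
and the well-known determinant identity for BFGS (two applications of the matrix determinant lemma) gives
\begin{equation*}
\ln\det(\hmB_{k+1})=\ln\det(\hmB_k)+\ln\frac{\hvs_k^{\T}\hvy_k}{\hvs_k^{\T}\hmB_k\hvs_k}.
\end{equation*}
Subtracting these and using $\Psi(\mA)=\Tr(\mA)-\ln\det(\mA)-n$ yields an exact formula for $\Psi(\hmB_{k+1})-\Psi(\hmB_k)$.

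The crucial identification is next. Because $\vd_k=-\mB_k^{-1}\vg_k$, the hat-space vectors satisfy $\hmB_k\hvs_k=-t_k\hvg_k$ with $t_k>0$, so the angle $\htheta_k$ from \eqref{def:theta_m} equals the familiar Byrd--Nocedal quantity
\begin{equation*}
\cos(\htheta_k)=\frac{\hvs_k^{\T}\hmB_k\hvs_k}{\Norm{\hvs_k}\Norm{\hmB_k\hvs_k}}.
\end{equation*}
Abbreviating $\hq_k\defeq\hvs_k^{\T}\hmB_k\hvs_k/\Norm{\hvs_k}^{2}$, this identity lets me rewrite $\Norm{\hmB_k\hvs_k}^{2}/(\hvs_k^{\T}\hmB_k\hvs_k)=\hq_k/\cos^{2}(\htheta_k)$ and $\ln(\hvs_k^{\T}\hvy_k/(\hvs_k^{\T}\hmB_k\hvs_k))=\ln(\hatm_k/\hq_k)$. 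Substituting reorganizes the exact increment into
\begin{equation*}
\Psi(\hmB_{k+1})-\Psi(\hmB_k)=\frac{\Norm{\hvy_k}^{2}}{\hvs_k^{\T}\hvy_k}+\ln\frac{\cos^{2}(\htheta_k)}{\hatm_k}+\left(\ln\frac{\hq_k}{\cos^{2}(\htheta_k)}-\frac{\hq_k}{\cos^{2}(\htheta_k)}\right).
\end{equation*}
Applying $\ln u\le u-1$ to the positive scalar $u=\hq_k/\cos^{2}(\htheta_k)$ bounds the parenthesized residual by $-1$, producing exactly the stated per-iteration inequality.

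For \eqref{eq:acc_potential_func}, I would telescope the per-iteration bound over $i=0,\dots,k-1$ so that the left-hand side collapses to $\Psi(\hmB_k)-\Psi(\hmB_0)$. Since $\Psi(\mA)=\sum_{j}(\lambda_j-\ln\lambda_j-1)$ over the eigenvalues of $\mA$ and $x-\ln x-1\ge 0$ for all $x>0$, we have $\Psi(\hmB_k)\ge 0$; dropping this nonnegative term and rearranging gives the claim. The main subtlety throughout is bookkeeping: keeping the two expressions for $\cos(\htheta_k)$, together with $\hatm_k$, $\hq_k$, and the key identity $\hmB_k\hvs_k=-t_k\hvg_k$, consistently aligned in the hat-space so that the whole computation collapses onto a single use of $\ln u\le u-1$.
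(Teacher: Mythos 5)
The paper imports this proposition from Jin et al.\ without providing its own proof, so there is nothing internal to compare against; your argument is the standard Byrd--Nocedal potential computation and it is correct — the trace and determinant identities for the rank-two update, the identification $\hmB_{k}\hvs_{k}=-t_{k}\hvg_{k}$ that converts $\cos(\htheta_{k})$ into the Byrd--Nocedal angle, the single application of $\ln u\leq u-1$, and the telescoping combined with $\Psi(\hmB_{k})\geq 0$ all check out. The only caveat is notational: your local abbreviation $\hq_{k}=\hvs_{k}^{\T}\hmB_{k}\hvs_{k}/\Norm{\hvs_{k}}^{2}$ collides with the paper's $\hq_{k}$ defined in \eqref{def:q}, so it should be renamed before being spliced into this manuscript.
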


Recent advances in BFGS have shown that line search strategies can attain explicit global convergence rates.
Specifically, \citet{krutikovConvergenceRateQuasiNewton2023} showed that BFGS with exact line search achieves global linear convergence when the objective is strongly convex and has a Lipschitz continuous gradient.
Later, \citet{jinNonasymptoticGlobalConvergence2025} proved that an additional assumption of Hessian Lipschitz continuity can ensure non-asymptotic local superlinear rates while maintaining global linear rates.
Furthermore, \citet{rodomanovGlobalComplexityAnalysis2024} established both non-asymptotic global and local guarantees for BFGS with more practical inexact line search strategies and \citet{jinNonasymptoticGlobalConvergence2024b,jinAffineinvariantGlobalNonasymptotic2025} considered a specific Armijo--Wolfe condition to provide sharper convergence rates and extended the results to non-strongly convex objectives.

\subsection{Global Convergence with Adaptive Step Size}\label{sec:ada_stepsize}

This subsection briefly reviews the adaptive BFGS for minimizing the self-concordant function
\cite{gaoQuasiNewtonMethodsSuperlinear2019} \footnote{
This subsection presents adaptive BFGS and its intuitions for minimizing the general self-concordant function, which can be easily extended from the results of \citet{gaoQuasiNewtonMethodsSuperlinear2019} that focus on the standard self-concordant case. For completeness, we provide proofs for the results under the general self-concordant assumption in Appendix~\ref{sec:pre}.}.
We present details of this method in Algorithm~\ref{alg:ada_bfgs}.
The highlight of adaptive BFGS is that the algorithm determines the step size $t_k$ by a closed form expression, which is easy to implement and avoids the additional computational cost in line search strategies.

We begin to introduce the intuition of BFGS with the following lemma, which applies the properties of self-concordant functions~\cite[Theorems 5.1.8 and~5.1.9]{nesterovLecturesConvexOptimization2018}
to characterize the divergence of the function value with respect to its linear approximation.

\begin{lem}\label{lem:self_con_bound}
    Under Assumptions~\ref{asm:self_con}, 
    we have 
    \begin{align}
        & \nabla f(\vx + t \vd)^\T\vd \geq \nabla f(\vx)^\T\vd + \frac{t \lNorm{\vd}{\vx}^{2}}{1 + M t\lNorm{\vd}{\vx}}, \label{eq:self_con_grad_lower} \\
        & f(\vx + t \vd) \geq f(\vx) + t\nabla f(\vx)^\T\vd + \frac{\omega(Mt\lNorm{\vd}{\vx})}{M^{2}} \label{eq:self_con_f_lower}
    \end{align}
    for all $\vx,\vd\in\BR^n$ and $t \geq 0$,
    where $\omega(z) \defeq z - \ln(1 + z)$ for $z \in [0, +\infty)$.
    Additionally, we have
    \begin{align}
        & \nabla f(\vx + t \vd)^\T\vd \leq \nabla f(\vx)^\T\vd + \frac{t \lNorm{\vd}{\vx}^{2}}{1 - M t \lNorm{\vd}{\vx}}, \label{eq:self_con_grad_upper} \\
        & f(\vx + t \vd) \leq f(\vx) + t\nabla f(\vx)^\T\vd + \frac{ \omega_{*}(Mt\lNorm{\vd}{\vx})}{M^{2}} \label{eq:self_con_f_upper}
    \end{align}
    for all $t \in \BR$ such that $Mt\lNorm{\vd}{\vx}\in[0,1)$, where $\omega_{*}(z) \defeq - z - \ln(1 - z)$ for $z \in [0, 1)$. 
\end{lem}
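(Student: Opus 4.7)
The plan is to reduce the four inequalities to a one-dimensional analysis along the ray $t \mapsto \vx + t \vd$. I would introduce the auxiliary functions $\varphi(t) \defeq \nabla f(\vx + t \vd)^{\T} \vd$, $\psi(t) \defeq f(\vx + t \vd)$, and $r(t) \defeq \lNorm{\vd}{\vx + t \vd}$, and record the identities $\varphi'(t) = r(t)^{2}$, $\varphi''(t) = \nabla^{3} f(\vx + t \vd)[\vd, \vd, \vd]$, and $\psi'(t) = \varphi(t)$. Differentiating $r(t)^{2} = \varphi'(t)$ gives $2 r(t) r'(t) = \varphi''(t)$, so Assumption~\ref{asm:self_con} translates directly into $|r'(t)| \leq M r(t)^{2}$, equivalently $|(1/r)'(t)| \leq M$. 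Integrating this differential inequality on $[0, t]$ yields the two-sided envelope
\[
    \frac{r(0)}{1 + M t\, r(0)} \leq r(t) \leq \frac{r(0)}{1 - M t\, r(0)},
\]
where the upper estimate is valid only while $M t\, r(0) < 1$ and the lower estimate is valid for all $t \geq 0$.

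To recover \cref{eq:self_con_grad_lower} and \cref{eq:self_con_grad_upper}, I would square this envelope and integrate $\varphi'(s) = r(s)^{2}$ on $[0, t]$, using the closed form
\[
    \int_{0}^{t} \frac{r(0)^{2}}{(1 \pm M s\, r(0))^{2}}\, ds = \frac{t\, r(0)^{2}}{1 \pm M t\, r(0)}.
\]
Substituting $r(0) = \lNorm{\vd}{\vx}$ and $\varphi(0) = \nabla f(\vx)^{\T}\vd$ then delivers the two gradient inequalities.

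For \cref{eq:self_con_f_lower} and \cref{eq:self_con_f_upper}, I would integrate once more: $\psi(t) - \psi(0) - t \nabla f(\vx)^{\T}\vd = \int_{0}^{t} (\varphi(s) - \varphi(0))\, ds$. Plugging in the gradient bounds just obtained and applying the change of variables $u = M s\, r(0)$ gives
\[
    \int_{0}^{t} \frac{s\, r(0)^{2}}{1 \pm M s\, r(0)}\, ds = \frac{1}{M^{2}} \int_{0}^{M t\, r(0)} \frac{u}{1 \pm u}\, du,
\]
and an elementary antiderivative (writing $u/(1+u) = 1 - 1/(1+u)$ and $u/(1-u) = -1 + 1/(1-u)$) shows that these integrals evaluate to $\omega(M t\, r(0)) / M^{2}$ and $\omega_{*}(M t\, r(0)) / M^{2}$, respectively, matching the definitions of $\omega$ and $\omega_{*}$ in the statement.

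No step presents a serious obstacle; the argument is essentially integration in disguise. The only point requiring mild care is the first step, where one must observe that $r(t) > 0$ along the ray (automatic under Assumption~\ref{asm:strongly_convex}) so that $1/r$ is well-defined, and keep the domain $M t\, r(0) \in [0, 1)$ in force for the upper bounds so all denominators remain positive; the lower bounds have no such restriction since $1 + M s\, r(0) > 0$ holds for every $s \geq 0$.
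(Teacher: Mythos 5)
Your proposal is correct and is essentially the argument the paper relies on: the paper cites Nesterov's Theorems 5.1.8--5.1.9 for this lemma, and its own proof of the refined version (Lemma~\ref{lem:self_con_bound_improved} in Appendix~\ref{proof:self_con_bound_improved}) uses exactly your reduction $\abs{\frac{\mathrm{d}}{\mathrm{d}t}(\varphi'(t))^{-1/2}} \leq M$ followed by two integrations. The only nitpick is your parenthetical that $r(t)>0$ is ``automatic under Assumption~\ref{asm:strongly_convex}'': that assumption is not among the lemma's hypotheses, so you should instead dispose of the degenerate case directly (the inequality $\abs{r'}\leq M r^{2}$ forces $r\equiv 0$ along the ray whenever it vanishes anywhere, in which case all four inequalities hold trivially).
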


For a given descent direction $\vd_k\in\BR^n$ such that $\vg_{k}^{\T} \vd_{k} < 0$, minimizing the right-hand side of \eqref{eq:self_con_f_upper} with respect to $t$ by taking $\vx=\vx_k$ and $\vd=\vd_k$ leads to the step size in adaptive BFGS~(Algorithm~\ref{alg:ada_bfgs}), i.e.,
\begin{equation}\label{eq:adaptive_stepsize}
    t_{k} = -\frac{\vg_{k}^{\T} \vd_{k}}{\lNorm{\vd_{k}}{\vx_{k}} (\lNorm{\vd_{k}}{\vx_{k}} - M \vg_{k}^{\T} \vd_{k})}.
\end{equation}

The following lemma extends Theorems 4.1, 6.2, and 6.3 of \citet{gaoQuasiNewtonMethodsSuperlinear2019} to show the decrease of the function value, the curvature condition for decent directions, and the step size defined in \eqref{eq:adaptive_stepsize}.

\begin{lem}\label{lem:ada_bfgs_properties}
    Under Assumptions~\ref{asm:strongly_convex} and~\ref{asm:self_con}, we let $\vx_{k + 1} = \vx_{k} + t_{k} \vd_{k}$ such that $\vd_{k}\in\BR^n$ is a descent direction such that $\vg_{k}^{\T} \vd_{k} < 0$ and $t_{k}$ follows \eqref{eq:adaptive_stepsize}, then we have
    \begin{align}
    & f(\vx_{k + 1}) \leq f(\vx_{k}) - \frac{\omega(M \eta_{k})}{M^{2}} , \label{eq:self_con_f_dec} \\
    & f(\vx_{k + 1}) - f(\vx_{k}) \leq \frac{1}{2} t_{k} \vg_{k}^{\T} \vd_{k}, \label{eq:armijo_condition} \\
    & \frac{2 M \eta_{k}}{1 + 2 M \eta_{k}} \vg_{k}^{\T} \vd_{k} \leq  \vg_{k+1}^{\T} \vd_{k} \leq 0, \label{eq:curvature_condition}
    \end{align}
    where $\omega(z) \defeq z - \ln(1 + z)$ and
    \begin{equation}\label{def:eta}
        \eta_{k} = -\frac{\vg_{k}^{\T} \vd_{k}}{\lNorm{\vd_{k}}{\vx_{k}}}.
    \end{equation}
\end{lem}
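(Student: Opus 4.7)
The plan is to prove each of the three inequalities in turn, using Lemma~\ref{lem:self_con_bound} as the primary tool and exploiting that $t_k$ in \eqref{eq:adaptive_stepsize} is exactly the minimizer of the right-hand side of \eqref{eq:self_con_f_upper}. To streamline the algebra, I would first introduce the shorthand $a \defeq \vg_k^\T \vd_k < 0$, $b \defeq \lNorm{\vd_k}{\vx_k}$, and $u \defeq M \eta_k = -Ma/b \geq 0$. A quick computation then yields the clean identities
\begin{equation*}
    Mt_k b = \frac{u}{1 + u}, \qquad 1 - Mt_k b = \frac{1}{1 + u}, \qquad t_k a = -\frac{u^{2}}{M^{2}(1 + u)}.
\end{equation*}
Everything in the proof reduces to plugging these into the three self-concordant bounds of Lemma~\ref{lem:self_con_bound}.

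For \eqref{eq:self_con_f_dec}, I substitute $t = t_k$ into \eqref{eq:self_con_f_upper} and use the identities above: the bound becomes $f(\vx_k) + t_k a + \omega_{*}(u/(1+u))/M^2$, and expanding $\omega_{*}(u/(1+u)) = -u/(1+u) + \ln(1+u)$ combines with $t_k a = -u^{2}/(M^2(1+u))$ to give exactly $-\omega(u)/M^2 = -\omega(M\eta_k)/M^2$.

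For \eqref{eq:armijo_condition}, rearranging the claim and using the computed values, it suffices to prove the scalar inequality
\begin{equation*}
    \omega(u) \geq \frac{u^{2}}{2(1 + u)} \qquad \text{for all } u \geq 0.
\end{equation*}
I would prove this by checking equality at $u = 0$ and comparing derivatives: the derivative of $\omega(u)$ is $u/(1+u)$, while the derivative of $u^{2}/(2(1+u))$ simplifies to $u(u+2)/(2(1+u)^{2})$, and the required inequality between them reduces to $2(1+u) \geq u + 2$, which is immediate. Combined with \eqref{eq:self_con_f_dec} this yields \eqref{eq:armijo_condition}.

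For \eqref{eq:curvature_condition}, the upper bound $\vg_{k+1}^\T \vd_k \leq 0$ follows because $t_k$ was chosen as the critical point of the right-hand side of \eqref{eq:self_con_f_upper}: plugging $t = t_k$ into \eqref{eq:self_con_grad_upper} gives $\vg_{k+1}^\T \vd_k \leq a + t_k b^{2}/(1 - Mt_k b) = a + (-a) = 0$ by direct substitution. For the lower bound, I substitute into \eqref{eq:self_con_grad_lower}, compute $1 + Mt_k b = (b - 2aM)/(b - aM)$ (using $Mt_k b = u/(1+u)$ requires rescaling by the extra factor), so that
\begin{equation*}
    \vg_{k+1}^\T \vd_k \geq a + \frac{t_k b^{2}}{1 + Mt_k b} = \frac{-2Ma^{2}}{b - 2Ma} = \frac{2M\eta_k}{1 + 2M\eta_k}\, a,
\end{equation*}
which is the desired inequality. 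The only mildly delicate point is this final algebraic reduction — the rest is bookkeeping — and since the adaptive step size does not drive $Mt_k b$ to $1$, the assumption $Mt_k\lNorm{\vd_k}{\vx_k} \in [0,1)$ required to invoke \eqref{eq:self_con_grad_upper} and \eqref{eq:self_con_f_upper} is automatic from the identity $Mt_k b = u/(1+u) < 1$.
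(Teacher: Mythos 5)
Your proposal is correct and follows essentially the same route as the paper's own proof: substitute the closed-form step size into the self-concordance bounds of Lemma~\ref{lem:self_con_bound}, simplify using $M t_k \lNorm{\vd_k}{\vx_k} = M\eta_k/(1+M\eta_k)$, and reduce \eqref{eq:armijo_condition} to the scalar inequality $\omega(u) \geq u^2/(2(1+u))$. The only cosmetic differences are that you verify that scalar inequality by a direct derivative comparison where the paper cites Nesterov's Lemma~5.1.5, and you explicitly check the domain condition $M t_k \lNorm{\vd_k}{\vx_k} < 1$, which the paper leaves implicit.
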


\begin{proof}
    Please refer to Appendix~\ref{proof:ada_bfgs_properties}.
\end{proof}

It is worth noting that \cref{eq:armijo_condition,eq:curvature_condition} can guarantee that the step size defined in $t_k$ satisfies the Armijo--Wolfe condition.
Therefore, adaptive BFGS~(Algorithm~\ref{alg:ada_bfgs}) has the global asymptotic convergence. 

\begin{prop}[{\cite[Theorem~6.6]{gaoQuasiNewtonMethodsSuperlinear2019}}]\label{prop:gao_global_linear}
    Under Assumptions~\ref{asm:strongly_convex} and~\ref{asm:self_con}, Algorithm~\ref{alg:ada_bfgs} holds that 
    \begin{align*}
        \sum_{k = 0}^{+\infty} \Norm{\vx_{k} - \vx_{*}} < +\infty.
    \end{align*}
\end{prop}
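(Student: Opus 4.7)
The plan is to establish $\sum_k \Norm{\vx_k - \vx_*} < \infty$ by proving asymptotic local superlinear convergence of the iterates, which trivially implies summability. I would organize the argument in three stages: global monotone descent with $\vx_k \to \vx_*$, a Byrd--Nocedal-style bound derived from the potential function, and a Dennis--Mor\'e argument that fires once the adaptive step size collapses to the unit step.

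\textbf{Stage 1 (compactness and $f(\vx_k) \to f(\vx_*)$).} By~\cref{eq:self_con_f_dec,eq:armijo_condition} the sequence $\{f(\vx_k)\}$ is monotonically decreasing, so all iterates lie in the sublevel set $\{\vx : f(\vx) \leq f(\vx_0)\}$. This set is bounded by Assumption~\ref{asm:strongly_convex} and hence compact, so there exists $L_{0} > 0$ with $\nabla^{2} f(\vx) \preceq L_{0} \mI$ along the trajectory. Telescoping~\cref{eq:self_con_f_dec} and using $f \geq f(\vx_*)$ yields $\sum_{k} \omega(M \eta_{k}) < +\infty$, hence $\eta_{k} \to 0$.

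\textbf{Stage 2 (gradient decay via the potential function).} I would apply~\cref{prop:potential_func_upper} with the weight $\mP = L_{0} \mI$. On the sublevel set the gradient is $L_{0}$-Lipschitz, so co-coercivity gives $\Norm{\vy_{k}}^{2} \leq L_{0}\, \vy_{k}^{\T} \vs_{k}$, which renders $\Norm{\hvy_{k}}^{2} / (\hvs_{k}^{\T} \hvy_{k}) \leq 1$. Then~\cref{eq:acc_potential_func} gives
\[
\sum_{i = 0}^{k - 1} \ln \frac{\cos^{2} \htheta_{i}}{\hatm_{i}} \geq - \Psi(\hmB_{0}) \qquad \text{for all } k \geq 1.
\]
Combined with the bounds $\hatm_{i} \in [\mu/L_{0}, 1]$ (from strong convexity and the Lipschitz bound on the sublevel set), the geometric mean of $\cos^{2} \htheta_{i}$ is bounded below. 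Together with the sufficient-descent estimate $\sum_{k} \eta_{k}^{2} / (1 + M \eta_{k}) < +\infty$, a standard Zoutendijk-type argument gives $\Norm{\vg_{k}} \to 0$, and strong convexity then forces $\vx_{k} \to \vx_{*}$.

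\textbf{Stage 3 (local superlinear rate and summability).} Once $\vx_{k}$ is close to $\vx_{*}$, self-concordance and $\vs_{k} \to \vzero$ force $M \eta_{k} \to 0$ and $\eta_{k} / \lNorm{\vd_{k}}{\vx_{k}} \to 1$, so the adaptive step size $t_{k} = \eta_{k} / [\lNorm{\vd_{k}}{\vx_{k}} (1 + M \eta_{k})]$ tends to $1$. In this regime the iteration reduces to unit-step BFGS on a function whose Hessian is locally Lipschitz (by~\cref{lem:self_con2Hessian_Lip} applied on the sublevel set), and the classical Dennis--Mor\'e characterization - re-derived through the same potential-function bound above - yields $\Norm{\vx_{k + 1} - \vx_{*}} / \Norm{\vx_{k} - \vx_{*}} \to 0$. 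Any superlinearly convergent sequence is summable, proving the claim.

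\textbf{Main obstacle.} The delicate point is the transition between the global and local phases: one must show that the explicit adaptive step $t_{k}$ eventually reproduces the unit Newton-like step, which requires tracking both the self-concordance factor $M \eta_{k}$ and the quality of the Hessian estimator $\mB_{k}$ simultaneously. The potential function in~\cref{prop:potential_func_upper} is the unifying tool, but unlike the standard Wolfe-line-search analysis I have to match its consequences against the specific algebraic form of $t_{k}$, in particular the denominator $1 + M \eta_{k}$ that distinguishes the adaptive scheme from unit-step BFGS.
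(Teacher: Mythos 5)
The paper does not reprove this proposition: it is quoted from Gao and Goldfarb, and the route taken there (and signalled in this paper just above the statement) is that \cref{eq:armijo_condition,eq:curvature_condition} show the adaptive step satisfies the Armijo--Wolfe conditions, whereupon the classical Powell/Byrd--Nocedal global convergence theorem for BFGS with Wolfe line search gives \emph{R-linear} decay of $f(\vx_k)-f(\vx_*)$, and strong convexity converts $\sqrt{f(\vx_k)-f(\vx_*)}\le C r^{k/2}$ into a summable bound on $\Norm{\vx_k-\vx_*}$. No superlinear convergence is needed.

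Your Stage 3 contains a genuine logical gap: it is circular. The Dennis--Mor\'e/Griewank--Toint argument for BFGS needs $\sum_k\Norm{\vx_k-\vx_*}<+\infty$ as a \emph{hypothesis} --- it is exactly what controls the accumulated perturbations $\sum_k\Norm{\vy_k-\nabla^2f(\vx_*)\vs_k}/\Norm{\vs_k}\lesssim L_2\sum_k\Norm{\vx_k-\vx_*}$ in the bounded-deterioration/potential-function estimate that yields $\Norm{(\mB_k-\nabla^2f(\vx_*))\vd_k}/\Norm{\vd_k}\to 0$. The paper states this ordering explicitly: ``Proposition~\ref{prop:gao_global_linear} and the Lipschitz continuity of the Hessian directly yield the Dennis--Mor\'e characterization.'' You cannot re-derive the Dennis--Mor\'e condition from the Stage~2 potential bound alone, since that bound only controls the geometric mean of $\cos^2\htheta_i/\hatm_i$ and says nothing about $\mB_k$ tracking $\nabla^2 f(\vx_*)$ without summability already in hand. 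Likewise your Stage~2, as written, delivers only $\vx_k\to\vx_*$ with no rate, which does not imply summability.

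The fix is short and makes Stage 3 unnecessary: your Stage 2 already has all the ingredients for a rate. Combine the Armijo-type decrease \eqref{eq:armijo_condition} with the curvature bound \eqref{eq:curvature_condition} (using $\eta_k\to 0$ from your Stage 1 so that eventually $1+2M\eta_k\le 3$) to get $f(\vx_{k+1})-f(\vx_*)\le\bigl(1-c\,\cos^2(\htheta_k)/\hatm_k\bigr)\,(f(\vx_k)-f(\vx_*))$ as in Proposition~\ref{prop:analy_framework}; the potential-function lower bound on the geometric mean of $\cos^2(\htheta_i)/\hatm_i$ then gives $f(\vx_k)-f(\vx_*)\le C\rho^k$ for some $\rho<1$ (Lemma~\ref{lem:ineq_arith_geo}), and $\Norm{\vx_k-\vx_*}\le\sqrt{2(f(\vx_k)-f(\vx_*))/\mu}$ yields a geometric, hence summable, series.
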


Proposition~\ref{prop:gao_global_linear} and the Lipschitz continuity of the Hessian~(Assumption~\ref{asm:Hessian_Lip}) directly yield the Dennis--Mor{\'e} characterization~\cite[Proposition~4]{griewankLocalConvergenceAnalysis1982}. 
In addition, \citet[Theorem~6.8]{gaoQuasiNewtonMethodsSuperlinear2019} shows that the step size $t_{k}$ converges to $1$, leading to the asymptotic superlinear convergence of adaptive BFGS~(Algorithm~\ref{alg:ada_bfgs}).

\begin{prop}[{\cite[Theorem~6.9]{gaoQuasiNewtonMethodsSuperlinear2019}}]\label{prop:gao_superlinear}
    Under Assumptions~\ref{asm:strongly_convex} and~\ref{asm:self_con}, Algorithm~\ref{alg:ada_bfgs} holds that 
    \begin{align*}
        \lim_{k \to +\infty} \frac{\Norm{\vx_{k + 1} - \vx_{*}}}{ \Norm{\vx_{k} - \vx_{*}}} = 0.
    \end{align*}
\end{prop}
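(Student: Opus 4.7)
The plan is to invoke the classical Dennis--Mor{\'e} characterization: for the BFGS iteration $\vx_{k+1} = \vx_k - t_k \mB_k^{-1} \vg_k$ under a Lipschitz continuous Hessian, $Q$-superlinear convergence is equivalent to the combination of (i) $t_k \to 1$ and (ii) the Dennis--Mor{\'e} consistency condition
\[
\lim_{k \to \infty} \frac{\Norm{(\mB_k - \nabla^2 f(\vx_*)) \vd_k}}{\Norm{\vd_k}} = 0.
\]
As the excerpt itself indicates, verifying these two ingredients is the whole proof.

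For ingredient~(ii), Proposition~\ref{prop:gao_global_linear} already supplies $\sum_k \Norm{\vx_k - \vx_*} < \infty$, so $\vx_k \to \vx_*$ and $\sum_k \Norm{\vs_k} < \infty$. Under Assumptions~\ref{asm:grad_Lip} and~\ref{asm:self_con}, Lemma~\ref{lem:self_con2Hessian_Lip} delivers Hessian Lipschitz continuity, and the classical BFGS analysis of \cite[Proposition~4]{griewankLocalConvergenceAnalysis1982} then yields the Dennis--Mor{\'e} limit. Technically, one instantiates the potential function from~\eqref{def:potential_func} with $\mP = \nabla^2 f(\vx_*)$; the Hessian Lipschitz bound combined with $\sum_k \Norm{\vs_k} < \infty$ forces $\Norm{\hvy_k}^2 / (\hvs_k^\T \hvy_k) - 1$ to be summable, so by the telescoping inequality in Proposition~\ref{prop:potential_func_upper} both $\cos \htheta_k \to 1$ and $\hatm_k \to 1$, which is equivalent to Dennis--Mor{\'e}.

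For ingredient~(i), write $\xi_k \defeq -\vg_k^\T \vd_k / \lNorm{\vd_k}{\vx_k}^2$ so that the adaptive step size from Line~\ref{line:tk} of Algorithm~\ref{alg:ada_bfgs} factors as $t_k = \xi_k / (1 + M \eta_k)$ with $\eta_k$ as in~\eqref{def:eta}. The Dennis--Mor{\'e} condition combined with the continuity of $\nabla^2 f$ at $\vx_*$ gives $-\vg_k^\T \vd_k = \vd_k^\T \mB_k \vd_k = \lNorm{\vd_k}{\vx_k}^2 (1 + o(1))$, so $\xi_k \to 1$; then $\eta_k = \xi_k \lNorm{\vd_k}{\vx_k} \to 0$ because $\vd_k \to 0$ (a consequence of $\vg_k \to 0$ together with the uniform spectral bounds on $\mB_k^{-1}$ afforded by the same potential function analysis). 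Hence $t_k \to 1$, recovering the content of \cite[Theorem~6.8]{gaoQuasiNewtonMethodsSuperlinear2019}.

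With both ingredients in hand, the proof closes via a Taylor expansion: Hessian Lipschitz continuity gives $\vg_k = \nabla^2 f(\vx_*)(\vx_k - \vx_*) + O(\Norm{\vx_k - \vx_*}^2)$, and substituting into $\vx_{k+1} - \vx_* = \vx_k - \vx_* - t_k \mB_k^{-1} \vg_k$ yields
\[
\vx_{k+1} - \vx_* = \mB_k^{-1} \bigl(\mB_k - t_k \nabla^2 f(\vx_*)\bigr) (\vx_k - \vx_*) + o(\Norm{\vx_k - \vx_*}).
\]
Combining $t_k \to 1$ with the Dennis--Mor{\'e} limit (applied along $(\vx_k - \vx_*)/\Norm{\vx_k - \vx_*}$, which agrees with $\vd_k / \Norm{\vd_k}$ up to higher-order terms) produces $\Norm{\vx_{k+1} - \vx_*} = o(\Norm{\vx_k - \vx_*})$, as desired. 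The hard part is ingredient~(ii): converting summability of $\Norm{\vs_k}$ into the Dennis--Mor{\'e} consistency limit is the delicate core of Griewank's classical BFGS argument, resting on the rank-two structure of~\eqref{eq:bfgs_update} and a careful accounting through $\Psi$; once this is established, the remaining steps are essentially routine.
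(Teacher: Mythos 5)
The paper gives no proof of this proposition: it is quoted verbatim from Gao and Goldfarb (Theorem 6.9), and the surrounding text justifies it exactly along your lines --- Proposition~\ref{prop:gao_global_linear} plus Hessian Lipschitz continuity yields the Dennis--Mor{\'e} condition via Griewank's argument, and Theorem~6.8 of the cited work gives $t_k \to 1$. Your proposal is a correct expansion of that same route; the only caveat is that the gradient and Hessian Lipschitz constants you invoke via Assumption~\ref{asm:grad_Lip} and Lemma~\ref{lem:self_con2Hessian_Lip} are, under the stated hypotheses (Assumptions~\ref{asm:strongly_convex} and~\ref{asm:self_con} only), available just on the sublevel set of $\vx_0$, which suffices since the iterates remain there --- precisely the point made in the paper's remark following the proposition.
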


\begin{remark}
    The proofs of Propositions~\ref{prop:gao_global_linear} and~\ref{prop:gao_superlinear}~\cite{gaoQuasiNewtonMethodsSuperlinear2019} require the Lipschitz continuity of the gradient and Hessian on the sub-level set $\{\vx \in \BR^{n}: f(\vx) \leq f(\vx_0)\}$, which naturally holds under Assumptions~\ref{asm:strongly_convex} and~\ref{asm:self_con}.
\end{remark}

\section{The Phase of Global Linear Convergence}\label{sec:linear_phase}

This section provides the explicit global linear convergence rates for adaptive BFGS~(Algorithm~\ref{alg:ada_bfgs}).
We analyze the behavior of the algorithm based on its two cases, which are partitioned according to the value of $\eta_{k} = -\vg_{k}^{\T} \vd_{k}/\lNorm{\vd_{k}}{\vx_{k}}$.
Specifically, we define the index set for all $k \geq 0$ based on the value of $\eta_{k}$ as follows:
\begin{align}
    & \fI_{\infty} \defeq \left\{ k \in \BN : 0 \leq M \eta_{k} < 1 \right\}. \label{def:ind_set} 
\end{align}    
Then the first case corresponds to the iteration counters with  $k \in \fI_{\infty}$ (i.e., the small $\eta_k$) that yields the adaptive step sizes satisfying the curvature condition \eqref{eq:curvature_condition}, and the second case corresponds to the iteration counters with $k \notin \fI_{\infty}$ (i.e., the large $\eta_k$)  that leads to the decrease on the function value controlled by \eqref{eq:self_con_f_dec}.
The remainder of this section establishes the global linear convergence rate of $\fO(( 1 - 1 / \varkappa)^k)$ by analyzing the cases of small $\eta_k$ and large $\eta_k$, respectively. 

\subsection{The Case of Small \texorpdfstring{$\eta_{k}$}{eta\_k}}

We first show the decease of the function value gap $f(\vx_{k}) - f(\vx_{*})$ for adaptive BFGS~(Algorithm~\ref{alg:ada_bfgs}).

\begin{prop}\label{prop:analy_framework}
    Under Assumptions~\ref{asm:strongly_convex} and~\ref{asm:self_con}, Algorithm~\ref{alg:ada_bfgs} holds
    \begin{equation}\label{eq:analy_framework}
        \frac{f(\vx_{k + 1}) - f(\vx_{*})}{f(\vx_{k}) - f(\vx_{*})}
         \leq 1 - \frac{\hq_{k}\cos^{2}(\htheta_{k})}{2 (1 + 2 M \eta_{k})\hatm_{k}}
    \end{equation}
    for all $k \geq 0$, where $\htheta_{k}$ and $\hatm_{k}$ follow the definitions in \cref{def:wgt_vec,eq:affine-inv,def:theta_m} with any $\mP\in\BS^n_{++}$ and $\hq_{k}$ is defined as
    \begin{equation}\label{def:q}
        \hq_{k} \defeq \frac{\Norm{\hvg_{k}}^{2}}{f(\vx_{k}) - f(\vx_{*})}.
    \end{equation}
\end{prop}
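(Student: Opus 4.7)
The plan is to reduce the target inequality to a single intermediate estimate
\[
f(\vx_{k+1}) - f(\vx_k) \;\leq\; -\frac{(\vg_k^\T \vs_k)^2}{2(1 + 2M\eta_k)\,\vy_k^\T \vs_k},
\]
and then rewrite the right-hand side using affine invariance. Indeed, by \eqref{eq:affine-inv} we have $\vg_k^\T \vs_k = \hvg_k^\T \hvs_k$ and $\vy_k^\T \vs_k = \hvy_k^\T \hvs_k$, so from the definitions \eqref{def:theta_m} and \eqref{def:q} the bound rewrites as
\[
-\frac{(\vg_k^\T \vs_k)^2}{2(1+2M\eta_k)\,\vy_k^\T \vs_k} \;=\; -\frac{\cos^{2}(\htheta_k)\,\Norm{\hvg_k}^{2}}{2(1+2M\eta_k)\,\hatm_k} \;=\; -\frac{\cos^{2}(\htheta_k)\,\hq_k\,(f(\vx_k)-f(\vx_*))}{2(1+2M\eta_k)\,\hatm_k},
\]
and dividing by $f(\vx_k)-f(\vx_*)>0$ yields exactly \eqref{eq:analy_framework}. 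So after this bookkeeping, the whole proof reduces to establishing the displayed intermediate estimate.

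For the intermediate estimate, I would start with the Armijo-type decrease \eqref{eq:armijo_condition} from Lemma~\ref{lem:ada_bfgs_properties}, which together with $\vs_k = t_k \vd_k$ gives $f(\vx_{k+1}) - f(\vx_k) \leq \tfrac{1}{2}\vg_k^\T \vs_k < 0$. The remaining task is to upgrade this linear bound to the quadratic form on the right. Writing $a := -\vg_k^\T \vs_k = t_k \eta_k \lNorm{\vd_k}{\vx_k} > 0$ and $b := (1+2M\eta_k)\,\vy_k^\T \vs_k > 0$, the target inequality becomes $-a/2 \leq -a^{2}/(2b)$, which is equivalent to $b \geq a$.

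To prove $b \geq a$, I would use the curvature condition \eqref{eq:curvature_condition}: since $\vg_{k+1}^\T \vd_k \geq \tfrac{2M\eta_k}{1+2M\eta_k}\,\vg_k^\T \vd_k$ and $\vy_k^\T \vs_k = t_k(\vg_{k+1}^\T \vd_k - \vg_k^\T \vd_k)$, a direct computation gives
\[
\vy_k^\T \vs_k \;\geq\; -\frac{t_k \vg_k^\T \vd_k}{1+2M\eta_k} \;=\; \frac{t_k \eta_k \lNorm{\vd_k}{\vx_k}}{1+2M\eta_k},
\]
and multiplying by $1+2M\eta_k$ yields $b \geq t_k \eta_k \lNorm{\vd_k}{\vx_k} = a$, as required.

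I do not anticipate a serious obstacle; the whole argument is really just a sign-careful algebraic repackaging of \eqref{eq:armijo_condition} and \eqref{eq:curvature_condition} together with the affine-invariance identities. The only mild subtlety is the sign bookkeeping, namely keeping straight that $\vg_k^\T \vs_k < 0$ while $\vy_k^\T \vs_k > 0$ and $\cos(\htheta_k)\geq 0$, so that the squared identity $(\hvg_k^\T \hvs_k)^2 = \cos^{2}(\htheta_k)\Norm{\hvg_k}^{2}\Norm{\hvs_k}^{2}$ matches the negative prefactor correctly.
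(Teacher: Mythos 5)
Your proposal is correct and follows essentially the same route as the paper: both arguments combine the Armijo-type decrease \eqref{eq:armijo_condition} with the curvature condition \eqref{eq:curvature_condition}, and your reduction to $b \geq a$ is exactly the paper's inequality $-\hvg_{k}^{\T}\hvs_{k} \geq (\hvg_{k}^{\T}\hvs_{k})^{2}/\bigl((1+2M\eta_{k})\,\hvy_{k}^{\T}\hvs_{k}\bigr)$ in a slightly different algebraic packaging. The affine-invariance bookkeeping and the final rewriting via $\cos^{2}(\htheta_{k})$, $\hatm_{k}$, and $\hq_{k}$ also match the paper's derivation.
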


\begin{proof}
    Recall that \eqref{eq:affine-inv} states $\hvg_{k}^{\T} \hvs_{k} = \vg_{k}^{\T} \vs_{k}$ and $\hvy_{k}^{\T} \hvs_{k} = \vy_{k}^{\T} \vs_{k}$. 
    Then applying \cref{eq:armijo_condition,eq:curvature_condition} in results of Lemma~\ref{lem:ada_bfgs_properties}, we have
    \begin{equation}
        f(\vx_{k}) - f(\vx_{k + 1}) \geq - \frac{1}{2} \hvg_{k}^{\T} \hvs_{k} = -\frac{\hvg_{k}^{\T} \hvs_{k}}{2\Norm{\hvg_{k}}^{2}} \Norm{\hvg_{k}}^{2}, \label{eq:_analy_framework} 
    \end{equation}
    and
    \begin{equation}
        \frac{\hvy_{k}^{\T} \hvs_{k}}{- \hvg_{k}^{\T} \hvs_{k}} = \frac{(\hvg_{k + 1} - \hvg_{k})^{\T} \hvd_{k}}{- \hvg_{k}^{\T} \hvd_{k}} \geq \frac{1}{1 + 2 M \eta_{k}}, \label{eq:analy_framework_curvature}
    \end{equation}
    respectively.
    Combining \eqref{eq:analy_framework_curvature} with the definition of $\htheta_{k}$ and $\hatm_{k}$, we obtain
    \begin{equation*}
        \frac{- \hvg_{k}^{\T} \hvs_{k}}{\Norm{\hvg_{k}}^{2}} = \frac{(\hvg_{k}^{\T} \hvs_{k})^{2}}{\Norm{\hvg_{k}}^{2} \Norm{\hvs_{k}}^{2}} \frac{\Norm{\hvs_{k}}^{2}}{- \hvg_{k}^{\T} \hvs_{k}} = \frac{(\hvg_{k}^{\T} \hvs_{k})^{2}}{\Norm{\hvg_{k}}^{2} \Norm{\hvs_{k}}^{2}} \frac{\Norm{\hvs_{k}}^{2}}{\hvy_{k}^{\T} \hvs_{k}} \frac{\hvy_{k}^{\T} \hvs_{k}}{- \hvg_{k}^{\T} \hvs_{k}} \geq \frac{\cos^{2}(\htheta_{k})}{(1 + 2 M \eta_{k})\hatm_{k}}.
    \end{equation*}
    Substituting above inequality into \eqref{eq:_analy_framework} and using the definition of $\hq_{k}$, we can derive
    \begin{equation}\label{eq:f_dec_analy_framework}
        f(\vx_{k}) - f(\vx_{k + 1}) \geq \frac{\hq_{k}\cos^{2}(\htheta_{k})}{2 (1 + 2 M \eta_{k})\hatm_{k}} \left(f(\vx_{k}) - f(\vx_{*}) \right).
    \end{equation}
    By rearranging the terms in the above inequality, we obtain the desired result in \eqref{eq:analy_framework}.    
\end{proof}

\begin{remark}
    Note that the right-hand side of \eqref{eq:analy_framework} holds
    \begin{equation*}
        \frac{\hq_{k}\cos^{2}(\htheta_{k})}{2 (1 + 2 M \eta_{k}) \hatm_{k}}\in (0, 1).
    \end{equation*}
    Specifically, \eqref{eq:self_con_f_dec} of Lemma~\ref{lem:ada_bfgs_properties} implies that $f(\vx_{k}) > f(\vx_{k + 1}) > f(\vx_{*})$. 
    Combining with \eqref{eq:f_dec_analy_framework}, we obtain the upper bound
    \begin{equation*}
        \frac{\hq_{k}}{2 (1 + 2 M \eta_{k})} \frac{\cos^{2}(\htheta_{k})}{\hatm_{k}} \leq \frac{f(\vx_{k}) - f(\vx_{k + 1})}{f(\vx_{k}) - f(\vx_{*})} < 1.
    \end{equation*}
    The lower bound is guaranteed by the fact that 
    \begin{equation*}
        \cos(\htheta_{k}) = -\frac{\hvg_{k}^{\T} \hvs_{k}}{\Norm{\hvg_{k}} \Norm{\hvs_{k}}} = -\frac{\vg_{k}^{\T} \vd_{k}}{\Norm{\hvg_{k}} \Norm{\hvd_{k}}} > 0,
    \end{equation*}
    which ensures that the right-hand side of \eqref{eq:analy_framework} lies in $(0,1)$.
\end{remark}
\begin{remark}
The proof of Proposition~\ref{prop:analy_framework} establishes the decrease in the function value for the adaptive step size based on the curvature condition parameterized by $1/(1+2M\eta_{k})$ and the Armijo condition with parameter $1/2$.
In a recent work, \citet[Proposition~1]{jinNonasymptoticGlobalConvergence2025} also characterized the decrease in the function value in a form similar to our formulation \eqref{eq:analy_framework}. However, their results rely on exact line search and a curvature condition parameterized by 1.
\end{remark}

We then define the index set
\begin{equation}\label{def:ind_set_small_case}
    \fI_{k} \defeq \left\{ i \in \fI_{\infty} : i < k \right\}, 
\end{equation}
which consists of the iteration counters associated with the small $\eta_{i}$ in the first $k$ iterations. 
Multiplying \eqref{eq:analy_framework} over the indices in $\fI_k$ yields the following proposition for the small $\eta_i$.

\begin{prop}\label{prop:linear_phase_small_case}
Under Assumptions~\ref{asm:strongly_convex},~\ref{asm:grad_Lip} and~\ref{asm:self_con}, Algorithm~\ref{alg:ada_bfgs} holds
\begin{equation}\label{eq:f_dec_linear_phase_small_case}
    \frac{f(\vx_{k + 1}) - f(\vx_{*})}{f(\vx_{k}) - f(\vx_{*})} \leq 1 - \frac{\hq_{k} \cos^{2}(\htheta_{k})}{6 \hatm_{k}}
\end{equation}
for all $k \in \fI_{\infty}$, where $\htheta_{k}$ and $\hatm_{k}$ follow definitions in \cref{def:wgt_vec,eq:affine-inv,def:theta_m} with any $\mP\in\BS^n_{++}$.
Furthermore, for all $k \geq 1$ with $|\fI_{k}| \geq 1$, we have
\begin{equation}\label{eq:linear_phase_small_case}
    \frac{f(\vx_{k}) - f(\vx_{*})}{f(\vx_{0}) - f(\vx_{*})} \leq \left( 1 -  \left( \prod_{i \in \fI_{k}} \frac{\hq_{i} \cos^{2}(\htheta_{i})}{6 \hatm_{i}} \right)^{\frac{1}{|\fI_{k}|}} \right)^{|\fI_{k}|}.
\end{equation}
\end{prop}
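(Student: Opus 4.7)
My plan is to prove the two inequalities in sequence. For \eqref{eq:f_dec_linear_phase_small_case}, I would directly specialize Proposition~\ref{prop:analy_framework} to indices in $\fI_\infty$. For \eqref{eq:linear_phase_small_case}, I would telescope the one-step contraction over the first $k$ iterations, exploit the monotonicity of the function value to discard the factors with $i\notin\fI_k$, and then pass from the product form to the symmetric form using Jensen's inequality followed by the AM--GM inequality.

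For the first inequality, Proposition~\ref{prop:analy_framework} gives
\begin{equation*}
    \frac{f(\vx_{k+1})-f(\vx_*)}{f(\vx_k)-f(\vx_*)}\leq 1-\frac{\hq_k\cos^2(\htheta_k)}{2(1+2M\eta_k)\hatm_k}.
\end{equation*}
Since $k\in\fI_\infty$ by \eqref{def:ind_set}, we have $M\eta_k<1$, so $2(1+2M\eta_k)<6$. As noted in the remark after Proposition~\ref{prop:analy_framework}, the quantity $\hq_k\cos^2(\htheta_k)/\hatm_k$ is strictly positive; replacing the denominator $2(1+2M\eta_k)$ by $6$ therefore weakens the bound in the right direction and yields \eqref{eq:f_dec_linear_phase_small_case}.

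For the second inequality, I would first note from \eqref{eq:self_con_f_dec} of Lemma~\ref{lem:ada_bfgs_properties} and the fact that $\omega(z)\geq 0$ for $z\geq 0$ that $f(\vx_{i+1})\leq f(\vx_i)$ for every $i\geq 0$, so each per-step ratio lies in $(0,1]$. Telescoping and discarding the factors with $i<k$ and $i\notin\fI_k$ (each of which is at most $1$) gives
\begin{equation*}
    \frac{f(\vx_k)-f(\vx_*)}{f(\vx_0)-f(\vx_*)}=\prod_{i=0}^{k-1}\frac{f(\vx_{i+1})-f(\vx_*)}{f(\vx_i)-f(\vx_*)}\leq\prod_{i\in\fI_k}(1-a_i),\quad a_i\defeq \frac{\hq_i\cos^2(\htheta_i)}{6\hatm_i},
\end{equation*}
where the per-index bound for $i\in\fI_k\subseteq\fI_\infty$ is the first part just proved.

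Finally, with $n\defeq|\fI_k|\geq 1$, the concavity of $\ln(1-x)$ on $(0,1)$ combined with Jensen's inequality yields $\prod_{i\in\fI_k}(1-a_i)\leq(1-\bar a)^n$ with $\bar a=\frac{1}{n}\sum_{i\in\fI_k}a_i$, and the AM--GM inequality gives $\bar a\geq\bigl(\prod_{i\in\fI_k}a_i\bigr)^{1/n}$. Chaining these two bounds delivers \eqref{eq:linear_phase_small_case}. The only delicate point I anticipate is verifying that $a_i\in(0,1)$ so that the logarithmic Jensen step is valid; this follows from the remark after Proposition~\ref{prop:analy_framework} via strict positivity of $\hq_i$, $\cos^2(\htheta_i)$, and $\hatm_i$, together with the strict decrease $f(\vx_{i+1})<f(\vx_i)$ guaranteed by \eqref{eq:self_con_f_dec}.
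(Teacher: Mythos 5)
Your proposal is correct and follows essentially the same route as the paper: the one-step bound comes from specializing Proposition~\ref{prop:analy_framework} with $1+2M\eta_k < 3$ on $\fI_\infty$, and the product bound comes from telescoping, discarding the factors outside $\fI_k$ via monotone decrease of $f(\vx_k)$, and then combining an arithmetic-mean bound on $\prod(1-a_i)$ with AM--GM on the $a_i$. Your Jensen step on $\ln(1-x)$ is exactly the content of the paper's Lemma~\ref{lem:ineq_arith_geo} (proved there by two applications of AM--GM), and your check that $a_i\in(0,1)$ is the right verification of that lemma's hypotheses.
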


The proof of Proposition~\ref{prop:linear_phase_small_case} requires the following lemma to bound the term of product.

\begin{lem}\label{lem:ineq_arith_geo}
    Suppose that the sequence $\{u_{i}\}_{i = 0}^{k - 1}$ and the positive number $a \geq 0$ 
    satisfy $u_{i} \geq 0$ and $1 - a u_{i} \geq 0$
    for all $i=0,\dots,k - 1$, then we have
    \begin{equation*}
        \prod_{i = 0}^{k - 1} (1 - a u_{i}) \leq \left( 1 - a \left( \prod_{i = 0}^{k - 1} u_{i} \right)^{\frac{1}{k}} \right)^{k}.
    \end{equation*}
\end{lem}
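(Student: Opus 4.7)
The plan is to reduce the inequality to a composition of Jensen's inequality (applied to a suitable concave function) and the standard AM-GM inequality. Concretely, I would introduce $\phi(u) \defeq \ln(1 - a u)$ on the domain where $1 - a u \geq 0$, and observe that $\phi''(u) = -a^{2}/(1 - a u)^{2} \leq 0$, so $\phi$ is concave wherever it is defined. Applying Jensen's inequality to the sample $\{u_{0}, \ldots, u_{k - 1}\}$, which lies in this domain by assumption, gives
\begin{equation*}
\frac{1}{k} \sum_{i = 0}^{k - 1} \ln(1 - a u_{i}) \;\leq\; \ln\!\left(1 - a \cdot \frac{1}{k} \sum_{i = 0}^{k - 1} u_{i}\right),
\end{equation*}
and exponentiating yields $\prod_{i = 0}^{k - 1}(1 - a u_{i}) \leq \bigl(1 - a \bar{u}\bigr)^{k}$, where $\bar{u} = \frac{1}{k}\sum_{i} u_{i}$ is the arithmetic mean.

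The second step is the classical AM-GM inequality, which gives $\bar{u} \geq \bigl(\prod_{i} u_{i}\bigr)^{1/k}$. Multiplying by $-a \leq 0$ reverses the inequality, so $1 - a \bar{u} \leq 1 - a \bigl(\prod_{i} u_{i}\bigr)^{1/k}$. Before raising both sides to the $k$-th power, I would note that both sides are nonnegative: from $1 - a u_{i} \geq 0$ for every $i$, each $u_{i} \leq 1/a$ (when $a > 0$; the case $a = 0$ is trivial), so the arithmetic mean satisfies $\bar{u} \leq 1/a$ and hence $0 \leq 1 - a \bar{u} \leq 1 - a\bigl(\prod_{i} u_{i}\bigr)^{1/k}$. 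Taking $k$-th powers is therefore monotone and preserves the direction, giving $(1 - a\bar{u})^{k} \leq \bigl(1 - a(\prod_{i} u_{i})^{1/k}\bigr)^{k}$. Chaining this with the bound obtained from Jensen's inequality yields the desired inequality. There is no real obstacle here; the only care needed is to verify that the quantities being raised to the $k$-th power remain nonnegative, which follows directly from the hypothesis $1 - a u_{i} \geq 0$.
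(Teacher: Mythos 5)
Your proof is correct and essentially the same as the paper's, which simply applies the AM--GM inequality twice: once to the numbers $1 - a u_{i}$ (your Jensen step on the concave function $\ln(1 - a u)$ is exactly this first AM--GM application in logarithmic form) and once to the $u_{i}$ themselves. Your extra care in verifying that $1 - a \bar{u}$ and $1 - a(\prod_i u_i)^{1/k}$ are nonnegative before taking $k$-th powers is a welcome detail the paper leaves implicit; the only cosmetic caveat is that if some $1 - a u_i = 0$ the logarithm is $-\infty$, a degenerate case in which the claimed inequality holds trivially anyway.
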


\begin{proof}
    Please refer to Appendix~\ref{proof:ineq_arith_geo}.
\end{proof}

We now prove Proposition~\ref{prop:linear_phase_small_case} as follows.

\begin{proof}[Proof of Proposition~\ref{prop:linear_phase_small_case}]
    Since $k \in \fI_{\infty}$, the definition \eqref{def:ind_set} implies
    \begin{equation*}
        \frac{1}{1 + 2 M \eta_{k}} \geq \frac{1}{3}.
    \end{equation*}
    Substituting above inequality into \eqref{eq:analy_framework}, we obtain \eqref{eq:f_dec_linear_phase_small_case}.

    Note that for all $k \geq 1$, we have
    \begin{equation*}
        \frac{f(\vx_{k}) - f(\vx_{*})}{f(\vx_{0}) - f(\vx_{*})} = \prod_{i = 0}^{k - 1} \frac{f(\vx_{i + 1}) - f(\vx_{*})}{f(\vx_{i}) - f(\vx_{*})} \leq \prod_{i \in \fI_{k}} \frac{f(\vx_{i + 1}) - f(\vx_{*})}{f(\vx_{i}) - f(\vx_{*})} \leq \prod_{i \in \fI_{k}} \left( 1 - \frac{\hq_{i} \cos^{2}(\htheta_{i})}{6 \hatm_{i}} \right),
    \end{equation*}
    where the first inequality is due to the strict monotonic decrease of $\{f(\vx_{k})\}_{k \geq 0}$ given by \eqref{eq:self_con_f_dec}, and the second follows from \eqref{eq:f_dec_linear_phase_small_case}. 
    Thus, Lemma~\ref{lem:ineq_arith_geo} implies that
    \begin{equation*}
        \frac{f(\vx_{k}) - f(\vx_{*})}{f(\vx_{0}) - f(\vx_{*})} \leq \prod_{i \in \fI_{k}} \left( 1 - \frac{\hq_{i} \cos^{2}(\htheta_{i})}{6 \hatm_{i}} \right) \leq \left( 1 - \left( \prod_{i \in \fI_{k}} \frac{\hq_{i} \cos^{2}(\htheta_{i})}{6 \hatm_{i}} \right)^{\frac{1}{|\fI_{k}|}} \right)^{|\fI_{k}|},
    \end{equation*}
    which completes the proof.
\end{proof}

\subsection{The Case of Large \texorpdfstring{$\eta_{k}$}{eta\_k}}

We first show that for all $k \notin \fI_{\infty}$, 
the adaptive BFGS~(Algorithm~\ref{alg:ada_bfgs}) can decrease the objective function value by at least some fixed constant, which means the number of such iteration counters is finite. 
To formalize this result,  we define the index set
\begin{equation}\label{def:ind_set_large_case}
    \bar{\fI}_{k} \defeq \left\{ 0, 1, 2, \dots, k - 1 \right\} \backslash \fI_{k}. 
\end{equation}
and provide the following properties for the function $\omega(z) = z - \ln (1 + z)$, which extends Lemma 5.1.5 of \citet{nesterovLecturesConvexOptimization2018} and Lemma~A.1 of \citet{rodomanovGlobalComplexityAnalysis2024}.
\begin{lem}\label{lem:omega_bounds}
    We let $\omega(z) = z - \ln (1 + z)$ for $z \in [0, +\infty)$. Then the function $\omega(z)$ is strictly increasing on its domain and has the lower bound 
    \begin{equation}\label{eq:omega_lower}
        \omega(z) \geq \begin{cases}
            \dfrac{z^{2}}{4}, & 0 \leq z < 1, \\[0.3cm]
            \dfrac{z}{4}, & z \geq 1.
        \end{cases}
    \end{equation}
    Furthermore, its inverse function $\omega^{-1}(y)$ holds
    \begin{equation}\label{eq:omega_inv_upper}
        \omega^{-1}(y) \leq y + \sqrt{2 y}
    \end{equation}
    for all $y \geq 0$.
\end{lem}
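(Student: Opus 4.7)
The plan is to verify the three claims separately by elementary calculus applied to the explicit form $\omega(z) = z - \ln(1+z)$. Differentiation gives $\omega'(z) = z/(1+z)$, which is strictly positive on $(0,+\infty)$; hence $\omega$ is strictly increasing on $[0,+\infty)$, yielding the monotonicity claim and, in passing, the invertibility needed for the third statement.

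For the two-piece lower bound \eqref{eq:omega_lower}, I would split the analysis at $z=1$. On $[0,1)$ set $h(z) \defeq \omega(z) - z^2/4$ and compute $h'(z) = z/(1+z) - z/2 = z(1-z)/(2(1+z))$, which is nonnegative on $[0,1)$; combined with $h(0)=0$ this gives $\omega(z) \geq z^2/4$ on that interval. On $[1,+\infty)$ set $g(z) \defeq \omega(z) - z/4$; here $g'(z) = z/(1+z) - 1/4 \geq 1/2 - 1/4 = 1/4 > 0$, so $g$ is increasing on $[1,+\infty)$, and the initial value $g(1) = 3/4 - \ln 2 > 0$ then forces $g(z) \geq 0$, i.e. $\omega(z) \geq z/4$.

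For the inverse bound \eqref{eq:omega_inv_upper}, since $\omega$ is strictly increasing it suffices to show $\omega(y + \sqrt{2y}) \geq y$ for every $y \geq 0$. The natural substitution is $u \defeq \sqrt{2y}$, so that $y = u^2/2$; the target inequality then reduces to $u \geq \ln(1 + u + u^2/2)$, or equivalently $e^u \geq 1 + u + u^2/2$, which is immediate from the Taylor expansion of the exponential with nonnegative remainder. I do not anticipate any serious obstacle: every step is a direct computation. The only mildly delicate point is ensuring the two pieces of the lower bound fit together at the seam $z=1$, and the numerical inequality $\omega(1) = 1 - \ln 2 > 1/4$ is exactly what lets the linear bound on $[1,+\infty)$ take over cleanly from the quadratic bound on $[0,1)$.
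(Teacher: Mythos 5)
Your proof is correct, and it takes a more self-contained route than the paper. For the lower bound \eqref{eq:omega_lower}, the paper invokes the standard self-concordance inequality $\omega(z) \geq z^{2}/(2(1+z))$ from Nesterov's Lemma~5.1.5 and then evaluates the coefficient $1/(2(1+z))$ (resp.\ $z/(2(1+z))$) at $z=1$ to get the constant $1/4$ on each piece; you instead differentiate the differences $\omega(z)-z^{2}/4$ and $\omega(z)-z/4$ directly, which avoids the external lemma at the cost of the small numerical check $g(1)=3/4-\ln 2>0$. Both arguments split at $z=1$ and are equally rigorous. For the inverse bound \eqref{eq:omega_inv_upper}, the paper simply cites Lemma~A.1 of Rodomanov et al.\ with no proof, whereas you supply a complete argument: since $\omega$ is strictly increasing (and maps $[0,+\infty)$ onto $[0,+\infty)$, so that $\omega^{-1}$ exists), the claim reduces to $\omega(y+\sqrt{2y})\geq y$, and the substitution $u=\sqrt{2y}$ turns this into $e^{u}\geq 1+u+u^{2}/2$, which holds by the Taylor expansion of $e^{u}$ with nonnegative remainder. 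This is essentially the standard proof of the cited result, so nothing is lost; what your version buys is a fully elementary, reference-free proof of the lemma.
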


\begin{proof}
    Please refer to Appendix~\ref{proof:omega_bounds}.
\end{proof}

We present the upper bound of $|\bar{\fI}_{k}|$ as follows. 

\begin{prop}\label{prop:linear_phase_large_case_bounds1}
Under Assumptions~\ref{asm:strongly_convex} and~\ref{asm:self_con}. For all $k \geq 1$, the index set $\bar{\fI}_{k}$ defined in \eqref{def:ind_set_large_case} holds
\begin{equation}\label{eq:linear_phase_large_case_bounds1}
    |\bar{\fI}_{k}| \leq 4 M^{2} \Delta, 
\end{equation}
where $\Delta \defeq f(\vx_{0}) - f(\vx_{*})$.
\end{prop}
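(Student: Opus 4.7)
The plan is to exploit the per-step function-value decrease from Lemma~\ref{lem:ada_bfgs_properties} together with the lower bound on $\omega$ from Lemma~\ref{lem:omega_bounds}, restricted to the ``large $\eta_k$'' indices in $\bar\fI_k$.

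First, I would observe that for every index $i \in \bar\fI_k$ we have $i \notin \fI_\infty$, so by the definition of $\fI_\infty$ in \eqref{def:ind_set} and the fact that $\eta_i = -\vg_i^\T \vd_i/\lNorm{\vd_i}{\vx_i} \geq 0$ (since $\vd_i = -\mB_i^{-1}\vg_i$ with $\mB_i\succ\vzero$), it must hold that $M\eta_i \geq 1$. Applying the second branch of the lower bound \eqref{eq:omega_lower} in Lemma~\ref{lem:omega_bounds} then gives $\omega(M\eta_i) \geq M\eta_i/4 \geq 1/4$ for every $i\in\bar\fI_k$.

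Second, I would invoke the per-step decrease \eqref{eq:self_con_f_dec} from Lemma~\ref{lem:ada_bfgs_properties}, which yields
\begin{equation*}
f(\vx_i) - f(\vx_{i+1}) \geq \frac{\omega(M\eta_i)}{M^2} \geq \frac{1}{4M^2}
\quad\text{for every } i\in\bar\fI_k.
\end{equation*}
Since $\{f(\vx_k)\}_{k\geq 0}$ is monotonically decreasing, summing a telescoping series (and dropping the non-negative contributions from indices in $\fI_k$) gives
\begin{equation*}
\Delta = f(\vx_0) - f(\vx_*) \geq f(\vx_0) - f(\vx_k)
= \sum_{i=0}^{k-1} \bigl(f(\vx_i)-f(\vx_{i+1})\bigr)
\geq \sum_{i\in\bar\fI_k} \frac{1}{4M^2} = \frac{|\bar\fI_k|}{4M^2}.
\end{equation*}
Rearranging yields $|\bar\fI_k| \leq 4M^2\Delta$, which is exactly \eqref{eq:linear_phase_large_case_bounds1}.

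There is no real obstacle here: the argument is just a clean ``potential-decrease / counting'' argument, and the only point that requires a moment of care is verifying $\eta_i\geq 0$ so that the branch $M\eta_i\geq 1$ of \eqref{eq:omega_lower} is applicable. Assumption~\ref{asm:grad_Lip} is not needed for this proposition, though it appears in the ambient assumptions of the section.
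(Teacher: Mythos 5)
Your proof is correct and follows essentially the same route as the paper: a telescoping sum of the per-step decrease \eqref{eq:self_con_f_dec}, with the contribution of each $i\in\bar{\fI}_{k}$ lower-bounded by $\omega(M\eta_i)/M^2 \geq \omega(1)/M^2 \geq 1/(4M^2)$ via Lemma~\ref{lem:omega_bounds}. The only cosmetic difference is that you apply the branch $\omega(z)\geq z/4$ for $z\geq 1$ directly, while the paper invokes monotonicity of $\omega$ together with $\omega(1)\geq 1/4$; these are equivalent.
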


\begin{proof}
According to \eqref{eq:self_con_f_dec}, we know that  $\{f(\vx_{k})\}_{k \geq 0}$ is decreasing and it also implies
\begin{equation*}
\begin{aligned}
    f(\vx_{k}) - f(\vx_{*}) & = f(\vx_{0}) - f(\vx_{*}) + \sum_{i = 0}^{k - 1} \bigl( f(\vx_{i + 1}) - f(\vx_{i}) \bigr) \\
    & \leq f(\vx_{0}) - f(\vx_{*}) + \sum_{i \in \bar{\fI}_{k}} \bigl( f(\vx_{i + 1}) - f(\vx_{i}) \bigr) \\
    & \leq f(\vx_{0}) - f(\vx_{*}) - \frac{1}{M^{2}} \sum_{i \in \bar{\fI}_{k}} \omega(M \eta_{k}) \\ 
    & \leq f(\vx_{0}) - f(\vx_{*}) - \frac{\omega(1)|\bar{\fI}_{k}|}{M^{2}},
\end{aligned}
\end{equation*}
where the last inequality holds since $\omega(z)$ is monotonically increasing for $z \geq 0$ and the fact that $M \eta_{i} \geq 1$ for all $i \in \bar{\fI}_{k}$.
Combining above result with facts $f(\vx_{k}) - f(\vx_{*}) \geq 0$ and $\Delta = f(\vx_{0}) - f(\vx_{*})$, we obtain
\begin{equation*}
    \frac{\omega(1)|\bar{\fI}_{k}|}{M^{2}}  \leq \Delta.
\end{equation*}
Finally, \eqref{eq:omega_lower} with $z=1$ implies $\omega(1) \geq {1}/{4}$, which immediately achieves \eqref{eq:linear_phase_large_case_bounds1}.
\end{proof}

Recall that $k \notin \fI_{\infty}$ implies $M \eta_{k} \geq 1$, we can then apply \eqref{eq:omega_lower} to obtain a lower bound for $\omega(M \eta_{k})$. 
Together with \eqref{eq:self_con_f_dec}, this allows us to characterize the decrease for the square root of the function value gap by $\cos(\htheta_{k})$.

\begin{prop}\label{prop:linear_phase_large_case}
    Under Assumptions~\ref{asm:strongly_convex},~\ref{asm:grad_Lip} and~\ref{asm:self_con}, 
    Algorithm~\ref{alg:ada_bfgs} holds
    \begin{equation}\label{eq:linear_phase_large_case}
        \sqrt{f(\vx_{k + 1}) - f(\vx_{*})} - \sqrt{f(\vx_{k}) - f(\vx_{*})} \leq - \frac{\cos(\htheta_{k})}{4 \sqrt{2 \varkappa} M}
    \end{equation}
    for all $k \notin \fI_{\infty}$, where $\htheta_k$ follows the definitions in \cref{def:wgt_vec,eq:affine-inv,def:theta_m} with $\mP = L \mI$.
    Furthermore, for all $k \geq 1$, we have
    \begin{equation}\label{eq:linear_phase_large_case_bounds2}
        \sum_{i \in \bar{\fI}_{k}} \cos(\htheta_{i}) \leq 4 \sqrt{2 \varkappa} M \sqrt{\Delta}.
    \end{equation}
\end{prop}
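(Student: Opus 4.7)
The plan is to start from the function-value decrease of \eqref{eq:self_con_f_dec} combined with the large-argument lower bound on $\omega$ supplied by Lemma~\ref{lem:omega_bounds}, and then convert this bound on $\eta_k$ into a bound expressed in terms of $\cos(\htheta_k)$ using the specific weight $\mP = L\mI$. Concretely, since $k\notin\fI_\infty$ means $M\eta_k\geq 1$, the second branch of \eqref{eq:omega_lower} yields $\omega(M\eta_k)\geq M\eta_k/4$, so \eqref{eq:self_con_f_dec} gives the linear-in-$\eta_k$ decrease
\begin{equation*}
    f(\vx_k) - f(\vx_{k+1}) \;\geq\; \frac{\omega(M\eta_k)}{M^2} \;\geq\; \frac{\eta_k}{4M}.
\end{equation*}

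Next I would rewrite $\eta_k$ in an affine-invariant form. Since $\vs_k = t_k \vd_k$ with $t_k>0$, the definition of $\cos(\htheta_k)$ with $\mP=L\mI$ reduces to $\cos(\htheta_k) = -\vg_k^{\T}\vd_k/(\Norm{\vg_k}\Norm{\vd_k})$, so
\begin{equation*}
    \eta_k \;=\; \frac{-\vg_k^{\T}\vd_k}{\lNorm{\vd_k}{\vx_k}} \;=\; \cos(\htheta_k)\,\frac{\Norm{\vg_k}\,\Norm{\vd_k}}{\lNorm{\vd_k}{\vx_k}} \;\geq\; \frac{\cos(\htheta_k)\,\Norm{\vg_k}}{\sqrt{L}},
\end{equation*}
where the last inequality uses $\lNorm{\vd_k}{\vx_k}\leq\sqrt{L}\,\Norm{\vd_k}$ from Assumption~\ref{asm:grad_Lip}. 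Strong convexity (Assumption~\ref{asm:strongly_convex}) then supplies $\Norm{\vg_k}\geq \sqrt{2\mu(f(\vx_k)-f(\vx_*))}$, which combined with the previous display yields
\begin{equation*}
    f(\vx_k) - f(\vx_{k+1}) \;\geq\; \frac{\cos(\htheta_k)}{4M}\sqrt{\frac{2(f(\vx_k)-f(\vx_*))}{\varkappa}}.
\end{equation*}

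Setting $b_k \defeq \sqrt{f(\vx_k)-f(\vx_*)}$, I would use the elementary identity $b_k^2 - b_{k+1}^2 = (b_k - b_{k+1})(b_k + b_{k+1}) \leq 2 b_k (b_k - b_{k+1})$, which is valid since \eqref{eq:self_con_f_dec} guarantees $b_{k+1}\leq b_k$. Dividing the previous display by $2 b_k$ then gives exactly \eqref{eq:linear_phase_large_case}.

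Finally, for \eqref{eq:linear_phase_large_case_bounds2} I would sum \eqref{eq:linear_phase_large_case} over $i\in\bar{\fI}_k$ and insert non-positive increments for $i\in\fI_k$ (which hold because $\{b_i\}$ is monotone non-increasing on the whole range $\{0,\dots,k-1\}$ by \eqref{eq:self_con_f_dec}) to produce a telescoping sum over $\{0,\dots,k-1\}$:
\begin{equation*}
    \sum_{i\in\bar{\fI}_k}\frac{\cos(\htheta_i)}{4\sqrt{2\varkappa}\,M} \;\leq\; \sum_{i=0}^{k-1}(b_i - b_{i+1}) \;=\; b_0 - b_k \;\leq\; \sqrt{\Delta}.
\end{equation*}
Rearranging gives the claimed bound. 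The only delicate step is the translation between $\eta_k$ and $\cos(\htheta_k)$; it is crucial that the weight $\mP=L\mI$ matches the Lipschitz constant used to dominate $\lNorm{\vd_k}{\vx_k}$, so the factor $\sqrt{L}$ combines with $\sqrt{\mu}$ from strong convexity to give exactly $\sqrt{2\varkappa}$ in the denominator. The rest is routine bookkeeping.
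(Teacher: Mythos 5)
Your proposal is correct and follows essentially the same route as the paper: the bound $\omega(M\eta_k)\geq M\eta_k/4$ from \eqref{eq:omega_lower} combined with \eqref{eq:self_con_f_dec}, the conversion $\eta_k\geq\cos(\htheta_k)\Norm{\vg_k}/\sqrt{L}$ via $\lNorm{\vd_k}{\vx_k}\leq\sqrt{L}\Norm{\vd_k}$ and $\Norm{\vg_k}^2\geq 2\mu(f(\vx_k)-f(\vx_*))$, and the telescoping over $\{0,\dots,k-1\}$. The only cosmetic difference is that you inline the elementary square-root decrease step (the identity $b_k^2-b_{k+1}^2\leq 2b_k(b_k-b_{k+1})$) rather than invoking the paper's Lemma~\ref{lem:dec_sqrt}, which proves the same thing.
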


The proof of Proposition~\ref{prop:linear_phase_large_case} is based on the following lemma.

\begin{lem}\label{lem:dec_sqrt}
    Let $\{y_{k}\}_{k \geq 0}$, $\{\gamma_{k}\}_{k \geq 0}$ be two positive sequences satisfying
    \begin{equation*}
        y_{k + 1} - y_{k} \leq - \gamma_{k} \sqrt{y_{k}},
    \end{equation*}
    then we have
    \begin{equation*}
        \sqrt{y_{k + 1}} - \sqrt{y_{k}} \leq - \frac{\gamma_{k}}{2}.
    \end{equation*}
\end{lem}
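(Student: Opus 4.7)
The plan is to exploit the concavity of the square-root function, which provides a tangent-line bound that directly converts an additive decrement on $y_k$ into an additive decrement on $\sqrt{y_k}$. An essentially equivalent route is the conjugate identity $\sqrt{a}-\sqrt{b} = (a-b)/(\sqrt{a}+\sqrt{b})$; I will present the concavity approach as primary since it is one line and does not require any preliminary monotonicity observation, and then sketch the conjugate alternative.

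First, I would apply the first-order (tangent-line) upper bound for the concave function $z \mapsto \sqrt{z}$ at the base point $z = y_k$, which gives $\sqrt{y_{k+1}} \leq \sqrt{y_k} + (y_{k+1} - y_k)/(2\sqrt{y_k})$. Rearranging and substituting the hypothesis $y_{k+1} - y_k \leq -\gamma_k \sqrt{y_k}$ then yields
$$\sqrt{y_{k+1}} - \sqrt{y_k} \;\leq\; \frac{y_{k+1} - y_k}{2\sqrt{y_k}} \;\leq\; \frac{-\gamma_k \sqrt{y_k}}{2\sqrt{y_k}} \;=\; -\frac{\gamma_k}{2},$$
which is exactly the claim. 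The alternative conjugate-identity route proceeds by first noting that $\gamma_k \sqrt{y_k} \geq 0$ forces $y_{k+1} \leq y_k$, hence $\sqrt{y_{k+1}} + \sqrt{y_k} \leq 2\sqrt{y_k}$; one then writes $\sqrt{y_{k+1}} - \sqrt{y_k} = (y_{k+1}-y_k)/(\sqrt{y_{k+1}}+\sqrt{y_k})$ and uses that the numerator is non-positive to replace the denominator by its upper bound $2\sqrt{y_k}$, arriving at the same estimate.

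There is no real obstacle here: both proofs are a handful of symbols long, and the only hypothesis actually used is that $y_k > 0$, which guarantees $2\sqrt{y_k}$ is a strictly positive quantity one can divide by. The one point requiring a moment of care (particularly in the conjugate-identity variant) is that the numerator $y_{k+1} - y_k$ is non-positive, so bounding the denominator \emph{from above} by $2\sqrt{y_k}$ makes the ratio \emph{more negative}, which is the direction we need; flipping this would give a useless inequality. Positivity of $\{\gamma_k\}$ is not used in the argument itself — it only matters for the conclusion to be meaningful in downstream applications such as Proposition~\ref{prop:linear_phase_large_case}.
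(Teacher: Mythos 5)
Your proposal is correct, and your sketched ``conjugate-identity'' alternative is verbatim the paper's proof: the paper writes $\sqrt{y_{k+1}}-\sqrt{y_k}=(y_{k+1}-y_k)/(\sqrt{y_{k+1}}+\sqrt{y_k})$, uses $y_{k+1}\leq y_k$ to bound the denominator by $2\sqrt{y_k}$, and substitutes the hypothesis, exactly as you describe, including the sign consideration you flag. Your primary route via the tangent-line bound for the concave square root is an equally valid one-line variant of the same idea, so there is nothing further to add.
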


\begin{proof}
    Please refer to Appendix~\ref{proof:dec_sqrt}.
\end{proof}

We now prove Proposition~\ref{prop:linear_phase_large_case} as follows.

\begin{proof}[Proof of Proposition~\ref{prop:linear_phase_large_case}]
    For $k \notin \fI_{\infty}$, we have 
    \begin{equation*}
    \begin{aligned}
        f(\vx_{k + 1}) - f(\vx_{k}) & \overset{\eqref{eq:self_con_f_dec}}{\leq} - \frac{\omega(M \eta_{k})}{M^{2}}  \overset{\eqref{eq:omega_lower}}{\leq }- \frac{\eta_{k}}{4 M}  = - \frac{1}{4 M} \frac{- \vg_{k}^{\T} \vd_{k}}{\lNorm{\vd_{k}}{\vx_{k}}} \\ 
        & \leq - \frac{1}{4 \sqrt{L} M } \frac{-\vg_{k}^{\T} \vd_{k}}{\Norm{\vd_{k}} \Norm{\vg_{k}}} \Norm{\vg_{k}} \leq - \frac{\sqrt{2} \cos(\htheta_{k})}{4 \sqrt{\varkappa} M} \sqrt{f(\vx_{k}) - f(\vx_{*})}, 
    \end{aligned}
    \end{equation*}
    where the second inequality utilizes \eqref{eq:omega_lower} and $M\eta_{k} \geq 1$, the third inequality employs Assumption~\ref{asm:grad_Lip}, and the final inequality relies on Assumption~\ref{asm:strongly_convex} which leads  to $\Norm{\vg_{k}}^{2} \geq 2 \mu (f(\vx_{k}) - f(\vx_{*}))$.
    
    Then applying Lemma~\ref{lem:dec_sqrt} with $y_k=f(\vx_{k}) - f(\vx_{*})$ and $\gamma_k={\sqrt{2}\cos(\htheta_{k})}/(4 \sqrt{\varkappa} M)$ achieves \eqref{eq:linear_phase_large_case}.

    We now ready to prove \eqref{eq:linear_phase_large_case_bounds2}. 
    Note that for all $k \geq 1$, we have
    \begin{equation*}
    \begin{aligned}
        \sqrt{f(\vx_{k}) - f(\vx_{*})} & = \sqrt{f(\vx_{0}) - f(\vx_{*})} + \sum_{i = 0}^{k - 1} \left( \sqrt{f(\vx_{i + 1}) - f(\vx_{*})} - \sqrt{f(\vx_{i}) - f(\vx_{*})} \right) \\
        & \leq \sqrt{f(\vx_{0}) - f(\vx_{*})} + \sum_{i \in \bar{\fI}_{k}} \left( \sqrt{f(\vx_{i + 1}) - f(\vx_{*})} - \sqrt{f(\vx_{i}) - f(\vx_{*})} \right) \\
        & \leq \sqrt{f(\vx_{0}) - f(\vx_{*})} - \frac{1}{4 \sqrt{2 \varkappa} M } \sum_{i \in \bar{\fI}_{k}} \cos(\htheta_{i}),
    \end{aligned}
    \end{equation*}
    where the first inequality follows from the strict monotonic decrease of $\{f(\vx_{k})\}_{k \geq 0}$ in \eqref{eq:self_con_f_dec}, and the second from \eqref{eq:linear_phase_large_case}. 
    Based on facts $f(\vx_{k}) - f(\vx_{*}) \geq 0$ and $\Delta = f(\vx_{0}) - f(\vx_{*})$, we have
    \begin{equation*}
        \frac{1}{4 \sqrt{2 \varkappa} M} \sum_{i \in \bar{\fI}_{k}} \cos(\htheta_{i}) \leq \sqrt{\Delta},
    \end{equation*}
    which leads to \eqref{eq:linear_phase_large_case_bounds2}.
\end{proof}

\subsection{The Global Linear Convergence Rates}\label{sec:global-linear-rates}

We are ready to apply Propositions~\ref{prop:linear_phase_small_case} and~\ref{prop:linear_phase_large_case} to show adaptive BFGS~(Algorithm~\ref{alg:ada_bfgs}) has the global linear convergence rate of $\fO(\left( 1 - 1 / \varkappa \right)^{k})$. 
For ease of presentation, we use $K_{1}$ and $K_{2}$ to denote the right-hand sides of \eqref{eq:linear_phase_large_case_bounds1} and \eqref{eq:linear_phase_large_case_bounds2}, respectively, that is,
\begin{equation}\label{def:K_1_2}
    K_{1} \defeq 4 M^{2} \Delta 
    \quad \text{and} \quad K_{2} \defeq 4 \sqrt{2 \varkappa} M \sqrt{\Delta}.
\end{equation}
Additionally, we let $\mP=L\mI$ for \cref{def:wgt_vec,eq:affine-inv,def:theta_m} until the end of this section. 
We also define the weighted matrix $\bmB_{k}$ as
\begin{equation}\label{def:bar_B}
    \bmB_{k} = \frac{1}{L} \mB_{k}.
\end{equation}

The following lemma provides a lower bound for $\hq_{k}$ defined in \eqref{def:q} and an upper bound for $\Norm{\hvy_{k}}^{2} / (\hvs_{k}^{\T} \hvy_{k})$.

\begin{lem}[{\cite[Lemmas~4 and~5]{jinNonasymptoticGlobalConvergence2025}}]\label{lem:P_L_q_J_bounds}
    Under Assumptions~\ref{asm:strongly_convex}, ~\ref{asm:grad_Lip} and~\ref{asm:self_con}, Algorithm~\ref{alg:ada_bfgs} holds
    \begin{equation}\label{eq:P_L_q_lower}
        \hq_{k} \geq \frac{2}{\varkappa},
    \end{equation}
    and 
    \begin{equation}\label{eq:P_L_J_upper}
        \frac{\Norm{\hvy_{k}}^{2}}{\hvs_{k}^{\T} \hvy_{k}} \leq 1
    \end{equation}
    for all $k \geq 0$, where $\hq_{k}$ is defined in \eqref{def:q} and $\hvy_{k}$ and $\hvs_{k}$ are defined in \eqref{def:wgt_vec} by taking $\mP=L\mI$.
\end{lem}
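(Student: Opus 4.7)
The plan is to substitute $\mP = L\mI$ into the definitions \eqref{def:wgt_vec} and reduce each bound to a standard inequality that holds under Assumptions~\ref{asm:strongly_convex} and~\ref{asm:grad_Lip}. With this choice of weight, we have $\hvg_k = \vg_k/\sqrt{L}$, $\hvs_k = \sqrt{L}\,\vs_k$, and $\hvy_k = \vy_k/\sqrt{L}$. Therefore, the quantities of interest simplify to
\begin{equation*}
    \hq_k = \frac{\Norm{\vg_k}^2}{L\bigl(f(\vx_k) - f(\vx_*)\bigr)} \quad \text{and} \quad \frac{\Norm{\hvy_k}^2}{\hvs_k^\T \hvy_k} = \frac{\Norm{\vy_k}^2}{L\,\vs_k^\T \vy_k}.
\end{equation*}

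For the first bound \eqref{eq:P_L_q_lower}, I would invoke the standard consequence of $\mu$-strong convexity, namely $\Norm{\vg_k}^2 \geq 2\mu\bigl(f(\vx_k) - f(\vx_*)\bigr)$, which follows by minimizing the lower quadratic model of $f$ at $\vx_k$. Dividing by $L\bigl(f(\vx_k) - f(\vx_*)\bigr)$ and using $\varkappa = L/\mu$ immediately yields $\hq_k \geq 2\mu/L = 2/\varkappa$.

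For the second bound \eqref{eq:P_L_J_upper}, I would apply the co-coercivity inequality for convex functions with $L$-Lipschitz continuous gradient, which states
\begin{equation*}
    \vs_k^\T \vy_k = \bigl(\vx_{k+1} - \vx_k\bigr)^\T \bigl(\nabla f(\vx_{k+1}) - \nabla f(\vx_k)\bigr) \geq \frac{1}{L}\Norm{\vy_k}^2.
\end{equation*}
Dividing by $L\,\vs_k^\T \vy_k$ gives $\Norm{\vy_k}^2/(L\,\vs_k^\T \vy_k) \leq 1$, which is the claim. The co-coercivity itself can be derived from Assumptions~\ref{asm:strongly_convex} and~\ref{asm:grad_Lip} via the Baillon--Haddad theorem, or more directly by considering the convex auxiliary function $\varphi(\vz) = f(\vz) - \nabla f(\vx_k)^\T \vz$ and comparing its minimum to its values at $\vx_k$ and $\vx_{k+1}$.

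There is essentially no substantive obstacle here, since both inequalities are classical facts that follow immediately once the affine transformation is unpacked. The only care needed is in ensuring the co-coercivity inequality is invoked in the correct direction and that the $1/\sqrt{L}$ factors are tracked consistently through the definitions in \eqref{def:wgt_vec}, after which the result follows from at most a line or two of algebra in each case.
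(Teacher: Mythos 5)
Your proposal is correct: the paper does not prove this lemma itself but cites it from \citet[Lemmas~4 and~5]{jinNonasymptoticGlobalConvergence2025}, and your derivation—unpacking $\mP = L\mI$, then applying the strong-convexity bound $\Norm{\vg_k}^2 \geq 2\mu(f(\vx_k)-f(\vx_*))$ for \eqref{eq:P_L_q_lower} and co-coercivity of the $L$-Lipschitz gradient for \eqref{eq:P_L_J_upper}—is exactly the standard argument behind the cited result. The scaling factors are tracked correctly (note in particular that $\hvs_k^\T\hvy_k = \vs_k^\T\vy_k$), so nothing further is needed.
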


We now present the global linear convergence rate in the regime of $k \geq 2 \Psi(\bmB_{0}) + (2 \ln \varkappa + 1) K_{1}$.

\begin{prop}\label{prop:linear_phase_linear_rates1}
    Under Assumptions~\ref{asm:strongly_convex},~\ref{asm:grad_Lip} and~\ref{asm:self_con}, Algorithm~\ref{alg:ada_bfgs} holds
    \begin{equation}\label{eq:_linear_phase_linear_rates1}
        \frac{f(\vx_{k}) - f(\vx_{*})}{f(\vx_{0}) - f(\vx_{*})} \leq \biggl( 1 - \frac{1}{3 \varkappa} \exp \biggl( - \frac{\Psi(\bmB_{0}) + K_{1} \ln \varkappa}{k - K_{1}} \biggr) \biggr)^{k - K_{1}}
    \end{equation}
    for all $k > K_{1}$, where $\Psi(\cdot)$, $\bmB_{0}$, and $K_{1}$ follow the definitions in \cref{def:potential_func,def:bar_B,def:K_1_2} respectively. 
    Furthermore, for all $k \geq 2 \Psi(\bmB_{0}) + (2 \ln \varkappa + 1) K_{1}$, we have
    \begin{equation}\label{eq:linear_phase_first_rates1}
        \frac{f(\vx_{k}) - f(\vx_{*})}{f(\vx_{0}) - f(\vx_{*})} \leq \biggl( 1 - \frac{1}{6 \varkappa} \biggr)^{k - K_{1}}.
    \end{equation}
\end{prop}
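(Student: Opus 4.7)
The plan is to start from Proposition~\ref{prop:linear_phase_small_case} with the weight $\mP = L\mI$ (so that $\hmB_{0} = \bmB_{0}$) and control the geometric-mean factor in \eqref{eq:linear_phase_small_case}. Since $k > K_{1} \geq |\bar{\fI}_{k}|$ by Proposition~\ref{prop:linear_phase_large_case_bounds1}, we have $|\fI_{k}| = k - |\bar{\fI}_{k}| \geq k - K_{1} \geq 1$, so \eqref{eq:linear_phase_small_case} is applicable. The lower bound $\hq_{i} \geq 2/\varkappa$ from \eqref{eq:P_L_q_lower} factors out a $1/(3\varkappa)$, reducing the task to lower bounding $S_{k} \defeq \sum_{i \in \fI_{k}} \ln(\cos^{2}(\htheta_{i})/\hatm_{i})$.

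The key step is to bound $S_{k}$. By \eqref{eq:P_L_J_upper}, $\Norm{\hvy_{i}}^{2}/(\hvs_{i}^{\T} \hvy_{i}) \leq 1$ for every $i$, so the accumulated estimate \eqref{eq:acc_potential_func} of Proposition~\ref{prop:potential_func_upper} collapses to $\sum_{i = 0}^{k - 1} \ln(\cos^{2}(\htheta_{i})/\hatm_{i}) \geq -\Psi(\bmB_{0})$. I would then split this sum over $\fI_{k}$ and $\bar{\fI}_{k}$, and cap each term over $\bar{\fI}_{k}$ from above: $\cos^{2}(\htheta_{i}) \leq 1$ trivially, and under $\mP = L\mI$ we have $\hatm_{i} = \vy_{i}^{\T} \vs_{i}/(L\Norm{\vs_{i}}^{2}) \geq \mu/L = 1/\varkappa$ by the strong convexity in Assumption~\ref{asm:strongly_convex}, so each such term contributes at most $\ln \varkappa$. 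Combined with $|\bar{\fI}_{k}| \leq K_{1}$, this yields $S_{k} \geq -\Psi(\bmB_{0}) - K_{1} \ln \varkappa$.

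Plugging this bound back into \eqref{eq:linear_phase_small_case} gives
\begin{equation*}
    \frac{f(\vx_{k}) - f(\vx_{*})}{f(\vx_{0}) - f(\vx_{*})} \leq \left(1 - \frac{1}{3\varkappa} \exp\!\left(-\frac{\Psi(\bmB_{0}) + K_{1} \ln \varkappa}{|\fI_{k}|}\right)\right)^{|\fI_{k}|}.
\end{equation*}
The final step is to replace $|\fI_{k}|$ by the explicit lower bound $k - K_{1}$. A routine differentiation shows that $h(m) = (1 - \tfrac{1}{3\varkappa} e^{-c/m})^{m}$ is strictly decreasing in $m \geq 1$ for any fixed $c > 0$ (since both $\ln(1 - u)$ and $-uc/(m(1 - u))$ in the derivative of $\ln h$ are negative), so monotonicity in $|\fI_{k}|$ together with $|\fI_{k}| \geq k - K_{1}$ yields \eqref{eq:_linear_phase_linear_rates1}. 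For the simplified rate \eqref{eq:linear_phase_first_rates1}, note that the hypothesis $k \geq 2\Psi(\bmB_{0}) + (2\ln\varkappa + 1) K_{1}$ is equivalent to $(\Psi(\bmB_{0}) + K_{1} \ln \varkappa)/(k - K_{1}) \leq 1/2$, and since $e^{-1/2} > 1/2$ the exponential is bounded below by $1/2$, producing the base $1 - 1/(6\varkappa)$. I expect the splitting argument for $S_{k}$ to be the main obstacle: Proposition~\ref{prop:potential_func_upper} only delivers a bound over the full index range $\{0, \dots, k - 1\}$, whereas \eqref{eq:linear_phase_small_case} involves only $\fI_{k}$, so the $\bar{\fI}_{k}$ contribution must be absorbed as a clean additive penalty $K_{1} \ln \varkappa$, which is precisely where the strong-convexity lower bound $\hatm_{i} \geq 1/\varkappa$ and the counting bound $|\bar{\fI}_{k}| \leq K_{1}$ are indispensable.
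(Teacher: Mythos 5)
Your proposal is correct and follows essentially the same route as the paper: take $\mP = L\mI$, use $\hq_i \geq 2/\varkappa$, lower-bound $\sum_{i\in\fI_k}\ln(\cos^2(\htheta_i)/\hatm_i)$ by $-\Psi(\bmB_0) - K_1\ln\varkappa$ via Proposition~\ref{prop:potential_func_upper} together with $\cos^2(\htheta_i)\leq 1$ and $\hatm_i\geq 1/\varkappa$ on $\bar\fI_k$, and then replace $|\fI_k|$ by $k-K_1$ using the monotonicity that the paper isolates as Lemma~\ref{lem:dec_aux_func}. No gaps.
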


The proof of Proposition~\ref{prop:linear_phase_linear_rates1} relies on the following lemma.

\begin{lem}\label{lem:dec_aux_func}
    Define $\xi$: $(0, +\infty) \to \BR$ as
    \begin{equation*}
        \xi(u) = \left( 1 - a b^{\frac{1}{u}} \right)^{u},
    \end{equation*}
    where $0 < a < 1$ and $0 < b \leq 1$. Then the function $\xi(u)$ is strictly decreasing.
\end{lem}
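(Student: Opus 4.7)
Since $\xi(u) > 0$ for all $u > 0$, I would work with the logarithm
\begin{equation*}
    \Lambda(u) \defeq \ln \xi(u) = u \ln\bigl(1 - a b^{1/u}\bigr),
\end{equation*}
and aim to show $\Lambda'(u) < 0$ on $(0, +\infty)$, from which the strict monotonic decrease of $\xi = e^{\Lambda}$ follows immediately. The first step is to observe that the argument of the logarithm is well-defined and lies in $(0,1)$: because $0 < a < 1$ and $0 < b \leq 1$ we have $a b^{1/u} \in (0, a) \subset (0,1)$, so $1 - a b^{1/u} \in (1 - a, 1)$ throughout the domain.

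Next, I would differentiate. Writing $b^{1/u} = e^{(\ln b)/u}$, the chain rule gives
\begin{equation*}
    \frac{d}{du} b^{1/u} = -\frac{\ln b}{u^{2}}\, b^{1/u},
\end{equation*}
and then the product rule applied to $\Lambda(u) = u \ln\bigl(1 - a b^{1/u}\bigr)$ yields
\begin{equation*}
    \Lambda'(u) \;=\; \ln\bigl(1 - a b^{1/u}\bigr) \;+\; \frac{a\,(\ln b)\, b^{1/u}}{u\,\bigl(1 - a b^{1/u}\bigr)}.
\end{equation*}

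The argument closes with a sign check on the two summands. Since $1 - a b^{1/u} \in (1-a, 1) \subset (0, 1)$, the first summand satisfies $\ln\bigl(1 - a b^{1/u}\bigr) < 0$. For the second summand, we have $\ln b \leq 0$ by $0 < b \leq 1$, while all remaining factors $a$, $b^{1/u}$, $u$, and $1 - a b^{1/u}$ are strictly positive, so that the second summand is non-positive (it is strictly negative when $b < 1$ and vanishes when $b = 1$). In either case the first summand is already strictly negative, and hence $\Lambda'(u) < 0$ on $(0, +\infty)$, which is the desired conclusion.

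I do not expect any serious obstacle. The whole argument is a short differentiation followed by a sign bookkeeping exercise; the only point that needs care is the degenerate boundary $b = 1$, where the second summand in $\Lambda'(u)$ vanishes, but this does no harm since the first summand $\ln(1-a)$ alone remains strictly negative.
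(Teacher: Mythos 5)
Your proof is correct and follows essentially the same route as the paper: differentiate $\ln \xi(u)$, obtain $\ln(1-ab^{1/u}) + \frac{a(\ln b)b^{1/u}}{u(1-ab^{1/u})}$, and conclude both summands are non-positive with the first strictly negative. The only (harmless) imprecision is that for $b=1$ one has $ab^{1/u}=a$, so $1-ab^{1/u}\in[1-a,1)$ rather than $(1-a,1)$, which does not affect the sign argument.
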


\begin{proof}
    Please refer to Appendix~\ref{proof:dec_aux_func}.
\end{proof}

We now prove Proposition~\ref{prop:linear_phase_linear_rates1} as follows.

\begin{proof}[Proof of Proposition~\ref{prop:linear_phase_linear_rates1}]
    Since we take $\mP = L \mI$, Lemma~\ref{lem:P_L_q_J_bounds} implies $\hq_{k} \geq 2 / \varkappa$. 
    Substituting it into \eqref{eq:linear_phase_small_case} of Proposition~\ref{prop:linear_phase_small_case}, we obtain
    \begin{equation}\label{eq:linear_phase_terms}
        \frac{f(\vx_{k}) - f(\vx_{*})}{f(\vx_{0}) - f(\vx_{*})} \leq \left( 1 - \frac{1}{3 \varkappa} \left( \prod_{i \in \fI_{k}} \frac{\cos^{2} (\htheta_{i})}{\hatm_{i}} \right)^{\frac{1}{|\fI_{k}|}} \right)^{|\fI_{k}|}.
    \end{equation}

    In view of Lemma~\ref{lem:dec_aux_func}, we need to provide the lower bounds for $|\fI_{k}|$ and $\sum_{i \in \fI_{k}} \ln \frac{\cos^{2}(\htheta_{i})}{\hatm_{i}}$ to bound the right-hand side of \eqref{eq:linear_phase_terms}.  
    According to \eqref{eq:linear_phase_large_case_bounds1}, we obtain the lower bound of $|\fI_{k}|$ as 
    \begin{equation}\label{eq:linear_phase_rates1_bounds1}
        |\fI_{k}| = k - |\bar{\fI}_{k}| \geq k - K_{1}.
    \end{equation}
    As a direct consequence, we also obtain $|\fI_{k}| \geq 1$ since $k > K_{1}$.
    According to \eqref{eq:acc_potential_func}, we have
    \begin{equation*}
        \sum_{i = 0}^{k - 1} \ln \frac{\cos^{2}(\htheta_{i})}{\hatm_{i}}  \geq - \Psi(\bmB_{0}) + \sum_{i = 0}^{k - 1} \left( 1 - \frac{\Norm{\hvy_{i}}^{2}}{\hvs_{i}^{\T} \hvy_{i}} \right).
    \end{equation*}
    Recall that Lemma~\ref{lem:P_L_q_J_bounds} states $1 - \Norm{\hvy_{i}^{2}} / (\hvs_{i}^{\T} \hvy_{i}) \geq 0$, then we have
    \begin{equation*}
        \sum_{i = 0}^{k - 1} \ln \frac{\cos^{2}(\htheta_{i})}{\hatm_{i}} \geq - \Psi(\bmB_{0}).
    \end{equation*}
    On the other hand, the facts $\cos(\htheta_{i}) \in(0, 1)$ and $\hatm_{i} = \vy_{i}^{\T} \vs_{i} / (L \Norm{\vs_{i}}^{2}) \geq 1 / \varkappa$ implies
    \begin{equation*}
        \sum_{i \in \bar{\fI}_{k}} \ln \cos^{2}(\htheta_{i}) \leq 0 \quad \text{and} \quad \sum_{i \in \bar{\fI}_{k}} \ln \hatm_{i} \geq - |\bar{\fI}_{k}| \ln \varkappa.
    \end{equation*}
    Combining all above results, we have 
    \begin{equation}\label{eq:linear_phase_rates1_bounds2}
    \begin{aligned}
        \sum_{i \in \fI_{k}} \ln \frac{\cos^{2}(\htheta_{i})}{\hatm_{i}} & = \sum_{i = 0}^{k - 1} \ln \frac{\cos^{2}(\htheta_{i})}{\hatm_{i}} - \sum_{i \in \bar{\fI}_{k}} \ln \cos^{2}(\htheta_{i}) + \sum_{i \in \bar{\fI}_{k}} \ln \hatm_{i} \\
        & \geq - \Psi(\bmB_{0}) -  |\bar{\fI}_{k}| \ln \varkappa.
    \end{aligned}
    \end{equation}
    Consequently, we have
    \begin{equation*}
    \begin{aligned}
        \frac{f(\vx_{k}) - f(\vx_{*})}{f(\vx_{0}) - f(\vx_{*})} & ~\overset{\mathclap{\eqref{eq:linear_phase_terms}}}{\leq}~ \bigggl( 1 - \frac{1}{3 \varkappa} \Biggl( \prod_{i \in \fI_{k}} \frac{\cos^{2}(\htheta_{i})}{\hatm_{i}} \Biggr)^{\frac{1}{|\fI_{k}|}} \bigggr)^{|\fI_{k}|} \\
        & ~\overset{\mathclap{\eqref{eq:linear_phase_rates1_bounds2}}}{\leq}~ \biggl( 1 - \frac{1}{3 \varkappa} \exp \biggl( - \frac{\Psi(\bmB_{0}) + |\bar{\fI}_{k}| \ln \varkappa}{|\fI_{k}|} \biggr) \biggr)^{|\fI_{k}|} \\
        & ~\overset{\mathclap{\eqref{eq:linear_phase_large_case_bounds1}}}{\leq}~ \biggl( 1 - \frac{1}{3 \varkappa} \exp \biggl( - \frac{\Psi(\bmB_{0}) + K_{1} \ln \varkappa}{|\fI_{k}|} \biggr) \biggr)^{|\fI_{k}|} \\
        & ~\leq~ \biggl( 1 - \frac{1}{3 \varkappa} \exp \biggl( - \frac{\Psi(\bmB_{0}) + K_{1} \ln \varkappa}{k - K_{1}} \biggr) \biggr)^{k - K_{1}},
    \end{aligned}
    \end{equation*}
    where the last inequality is based on \eqref{eq:linear_phase_rates1_bounds1} and Lemma~\ref{lem:dec_aux_func}. 
    Thus, we obtain \eqref{eq:_linear_phase_linear_rates1}. 
    
    In the case of $k \geq 2 \Psi(\bmB_{0}) + (2 \ln \varkappa + 1) K_{1}$, we have
    \begin{equation*}
        \frac{f(\vx_{k}) - f(\vx_{*})}{f(\vx_{0}) - f(\vx_{*})} \leq \biggl( 1 - \frac{1}{3 \varkappa} \exp \biggl( - \frac{1}{2} \biggr) \biggr)^{k - K_{1}} \leq \biggl( 1 - \frac{1}{6 \varkappa} \biggr)^{k - K_{1}},
    \end{equation*}
    which completes the proof.
\end{proof}

We then present the global linear convergence rate in the regime of $k \geq 2 \Psi(\bmB_{0}) + 2 \sqrt{\varkappa} K_{2}$.

\begin{prop}\label{prop:linear_phase_linear_rates2}
    Under Assumptions~\ref{asm:strongly_convex},~\ref{asm:grad_Lip} and~\ref{asm:self_con}, Algorithm~\ref{alg:ada_bfgs} holds
    \begin{equation}\label{eq:linear_phase_linear_rates2}
        \frac{f(\vx_{k}) - f(\vx_{*})}{f(\vx_{0}) - f(\vx_{*})} \leq \left( 1 - \frac{1}{6 \varkappa} \right)^{k - \sqrt{\varkappa} K_{2}}
    \end{equation}
    for all $k \geq 2 \Psi(\bmB_{0}) + 2 \sqrt{\varkappa} K_{2}$, where $\Psi(\cdot)$, $\bmB_{0}$, and $K_{2}$ follow the definitions in \cref{def:potential_func,def:bar_B,def:K_1_2}, respectively.
\end{prop}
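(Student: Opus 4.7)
The plan is to mirror the proof of Proposition~\ref{prop:linear_phase_linear_rates1}, replacing the cardinality bound \eqref{eq:linear_phase_large_case_bounds1} on $|\bar{\fI}_k|$ by the summation bound \eqref{eq:linear_phase_large_case_bounds2} on $\sum_{i \in \bar{\fI}_k}\cos(\htheta_i)$. The starting point is the same: Proposition~\ref{prop:linear_phase_small_case} combined with $\hq_i \ge 2/\varkappa$ from Lemma~\ref{lem:P_L_q_J_bounds} yields
\[
\frac{f(\vx_k) - f(\vx_*)}{f(\vx_0) - f(\vx_*)} \le \left(1 - \frac{1}{3\varkappa}\exp\left(\frac{1}{|\fI_k|}\sum_{i \in \fI_k}\ln\frac{\cos^2(\htheta_i)}{\hatm_i}\right)\right)^{|\fI_k|},
\]
so it remains to control the log-sum and $|\fI_k|$, and then invoke Lemma~\ref{lem:dec_aux_func}.

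The key step is to bound $\sum_{i \in \bar{\fI}_k}\ln(\cos^2(\htheta_i)/\hatm_i)$ from above by $\fO(\sqrt{\varkappa}K_2)$ rather than $|\bar{\fI}_k|\ln\varkappa$. Decompose $\bar{\fI}_k = \fA_k \cup \fB_k$ with $\fA_k = \{i \in \bar{\fI}_k : \cos(\htheta_i) \ge 1/\sqrt{\varkappa}\}$ and $\fB_k = \bar{\fI}_k \setminus \fA_k$. The summation bound \eqref{eq:linear_phase_large_case_bounds2} gives $|\fA_k| \le \sqrt{\varkappa}K_2$. For $i \in \fB_k$, since $\cos^2(\htheta_i) < 1/\varkappa$ and $\hatm_i \ge 1/\varkappa$, we have $\cos^2(\htheta_i)/\hatm_i \le 1$, hence $\ln(\cos^2(\htheta_i)/\hatm_i) \le 0$. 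For $i \in \fA_k$, using $\hatm_i \ge 1/\varkappa$ together with Jensen's inequality $\sum_{i \in \fA_k}\ln\cos(\htheta_i) \le |\fA_k|\ln(K_2/|\fA_k|)$, and maximizing the resulting expression $n \mapsto n\ln(\varkappa K_2^2/n^2)$ over $n \in (0, \sqrt{\varkappa}K_2]$, I expect to obtain
\[
\sum_{i \in \bar{\fI}_k}\ln\frac{\cos^2(\htheta_i)}{\hatm_i} \le \frac{2}{e}\sqrt{\varkappa}K_2.
\]
Combined with \eqref{eq:acc_potential_func}, this yields $\sum_{i \in \fI_k}\ln(\cos^2(\htheta_i)/\hatm_i) \ge -\Psi(\bmB_0) - (2/e)\sqrt{\varkappa}K_2$.

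Finally, I would apply the monotonicity from Lemma~\ref{lem:dec_aux_func} together with the hypothesis $k \ge 2\Psi(\bmB_0) + 2\sqrt{\varkappa}K_2$ to conclude. Substituting an appropriate lower bound on $|\fI_k|$ of the form $k - \sqrt{\varkappa}K_2$ ensures that the exponent inside the exponential is bounded by a controllable constant, so $(1/(3\varkappa))\exp(-\cdot) \ge 1/(6\varkappa)$, delivering the rate $(1 - 1/(6\varkappa))^{k - \sqrt{\varkappa}K_2}$.

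The main obstacle is precisely this last cardinality step: we only know $|\fA_k| \le \sqrt{\varkappa}K_2$, not $|\bar{\fI}_k| \le \sqrt{\varkappa}K_2$, so $\fB_k$ could \emph{a priori} inflate $|\bar{\fI}_k|$ up to $K_1$. Reaching the $k - \sqrt{\varkappa}K_2$ exponent (rather than $k - K_1$) requires treating the $\fB_k$ indices carefully — either by absorbing them into the effective ``good'' count (exploiting the fact that their small $\cos$ makes their contribution to the log-sum non-positive without imposing a cardinality cost) or by invoking the strong $\sqrt{f - f_*}$-decrease of Proposition~\ref{prop:linear_phase_large_case} directly on $\fB_k \subseteq \bar{\fI}_\infty$ to augment the $\fI_k$ product.
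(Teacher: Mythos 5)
Your decomposition $\bar{\fI}_k = \fA_k \cup \fB_k$ and the resulting uniform bound $\sum_{i\in\bar{\fI}_k}\ln\bigl(\cos^2(\htheta_i)/\hatm_i\bigr) \leq (2/e)\sqrt{\varkappa}K_2$ are correct, but the proof does not close, and the obstacle you flag at the end is not a technicality — it is the entire content of the proposition. After your log-sum bound you are left with $\bigl(1 - \tfrac{1}{3\varkappa}\exp(-C/|\fI_k|)\bigr)^{|\fI_k|}$ with $C = \Psi(\bmB_0) + (2/e)\sqrt{\varkappa}K_2$, and by Lemma~\ref{lem:dec_aux_func} you may only substitute a valid \emph{lower bound} for $|\fI_k|$. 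The only cardinality bound available is $|\bar{\fI}_k| \leq K_1$, so you can reach the exponent $k - K_1$ but not $k - \sqrt{\varkappa}K_2$; when $K_1 \gg \sqrt{\varkappa}K_2$ and $|\bar{\fI}_k|$ is actually large, $|\fI_k|$ is small, $\exp(-C/|\fI_k|)$ collapses, and your bound degenerates. Neither of your proposed repairs works as stated: the $\fB_k$ indices cannot be ``absorbed into the good count'' because for $i \notin \fI_\infty$ the factor $1+2M\eta_i$ in Proposition~\ref{prop:analy_framework} is unbounded, so those iterations contribute no usable multiplicative contraction; and invoking Proposition~\ref{prop:linear_phase_large_case} on $\fB_k$ only gives a decrease proportional to $\cos(\htheta_i) < 1/\sqrt{\varkappa}$, which is exactly the regime where that decrease is too weak to convert into a per-iteration ratio.

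The paper resolves this differently, and the difference is instructive. Instead of compressing the bad-index contribution into a single constant, it keeps the dependence on $u = |\bar{\fI}_k|$ explicit: AM-GM applied to $\sum_{i\in\bar{\fI}_k}\cos(\htheta_i) \leq K_2$ gives $\prod_{i\in\bar{\fI}_k}\cos(\htheta_i) \leq (K_2/u)^u$, leading to the lower bound $-\Psi(\bmB_0) + 2u\ln\max\{u/K_2,1\} - u\ln\varkappa$ on the good-index log-sum. The term $2u\ln\max\{u/K_2,1\}$ is the compensation mechanism your uniform bound discards: when $u > \sqrt{\varkappa}K_2$ it dominates $u\ln\varkappa$, so a large number of bad iterations forces the good-index product to be \emph{large}, offsetting the shrunken exponent $k-u$. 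The resulting expression is then optimized over all admissible $u$ by the dedicated auxiliary Lemma~\ref{lem:aux_func_2}, and it is that optimization (not any cardinality bound on $\bar{\fI}_k$) which produces the exponent $k - \sqrt{\varkappa}K_2$. You would also need to rule out the degenerate case $|\fI_k| = 0$, which the paper does by a separate contradiction argument using \eqref{eq:linear_phase_large_case_bounds2}; your write-up omits this.
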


The proof of Proposition~\ref{prop:linear_phase_linear_rates2} requires the following lemma.

\begin{lem}\label{lem:aux_func_2}
    Define $\zeta(u) : [0, \bar{u}) \to \BR$ as 
    \begin{equation}\label{eq:aux_func_2}
        \zeta(u) \defeq \bigggl( 1 - a \Biggl( b \Bigl( \max\Bigl\{\frac{u}{s}, 1 \Bigr\} \Bigr)^{2 u} \biggl( \frac{1}{\varkappa} \biggr)^{u} \Biggr)^{\frac{1}{k - u}} \bigggr)^{k - u},
    \end{equation}
    where $a\in(0,1)$, $b\in(0,1]$, $s > 0$, $\varkappa > 1$ and 
    \begin{equation}\label{def:bar_u}
        \bar{u} \defeq \sup \bigggl\{ u : 0 < u < k, \ a \Biggl( b \Bigl( \max\Bigl\{\frac{u}{s}, 1 \Bigr\} \Bigr)^{2 u} \biggl( \frac{1}{\varkappa} \biggr)^{u} \Biggr)^{\frac{1}{k - u}} < 1 \bigggr\}.
    \end{equation}
    Then if $k \geq 2 \ln (1 / b) + 2 \sqrt{\varkappa} s$, we have the upper bound on the function $\zeta(u)$ as
    \begin{equation}\label{eq:aux_func_2_upper_1}
        \sup_{u \in [0, \bar{u})} \zeta(u) \leq \exp \biggl(- \frac{1}{2} a \bigl(k - \sqrt{\varkappa} s\bigr)\biggr).
    \end{equation}
    If we furthermore suppose $a \leq 1 / 3$, the upper bound can be enhanced to
    \begin{equation}\label{eq:aux_func_2_upper_2}
        \sup_{u \in [0, \bar{u})} \zeta(u) \leq \left( 1 - \frac{a}{2}\right)^{k - \sqrt{\varkappa} s}.
    \end{equation}
\end{lem}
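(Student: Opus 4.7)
I would first apply the elementary inequality $(1-x)^{n} \leq e^{-nx}$ (valid for $x \in [0,1]$, $n \geq 0$) to reduce the problem to a scalar estimate. Setting
\[
h(u) \defeq (k-u)\, B(u)^{1/(k-u)}, \qquad B(u) \defeq b\bigl(\max\{u/s,1\}\bigr)^{2u}\varkappa^{-u},
\]
this gives $\zeta(u) \leq \exp\bigl(-a\, h(u)\bigr)$. The crux is then the uniform scalar estimate
\[
h(u) \geq \tfrac{1}{2}\bigl(k-\sqrt{\varkappa}\,s\bigr) \qquad \text{for every } u \in [0, \bar{u}), \tag{$\star$}
\]
which, once established, immediately yields the first bound \eqref{eq:aux_func_2_upper_1} by exponentiation and supremum.

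To prove $(\star)$, I would exploit the structural identity $B(\sqrt{\varkappa}\,s) = b$, so that $h(\sqrt{\varkappa}\,s) = (k - \sqrt{\varkappa}\,s)\, b^{1/(k-\sqrt{\varkappa}\,s)} \geq (k - \sqrt{\varkappa}\,s)\, e^{-1/2}$ by the hypothesis $k - \sqrt{\varkappa}\,s \geq 2\ln(1/b)$. The remaining analysis splits at $u = \sqrt{\varkappa}\,s$. On $[\sqrt{\varkappa}\,s, \bar{u})$, differentiation gives $h'(u) = Y(u)\bigl((\ln B)'(u) - 1 + \ln Y(u)\bigr)$, where $Y(u) = B(u)^{1/(k-u)}$; since $u/s \geq \sqrt{\varkappa}$ forces $(\ln B)'(u) \geq 2$, and a bootstrap using $(\ln Y)'(u) = \bigl((\ln B)'(u) + \ln Y(u)\bigr)/(k-u)$ shows that $Y$ cannot drop below its initial value $e^{-1/2}$, I would conclude $h' > 0$ on this regime and hence $h(u) \geq h(\sqrt{\varkappa}\,s)$. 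The main obstacle is the complementary range $u \in [0, \sqrt{\varkappa}\,s]$: here $h$ is non-increasing on $[0,s]$ but can dip to an interior minimum $u^{*} \in (s, \sqrt{\varkappa}\,s)$ characterized by $(\ln B)'(u^{*}) + \ln Y(u^{*}) = 1$. Introducing the parametrization $\mu \defeq \tfrac{1}{2}\ln\varkappa - \ln(u^{*}/s) \in [0, \tfrac{1}{2}]$, the critical-point relation simplifies to $(1 - 2\mu)(k - u^{*}) = \ln(1/b) + 2\mu\sqrt{\varkappa}\,s\, e^{-\mu}$, and $h(u^{*}) = (k - u^{*})\, e^{2\mu - 1}$. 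Combining these with the standing hypothesis $k \geq 2\ln(1/b) + 2\sqrt{\varkappa}\,s$ reduces $(\star)$ to an explicit univariate inequality in $\mu$, which I would verify directly.

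With $(\star)$ in hand, the refined bound \eqref{eq:aux_func_2_upper_2} under $a \leq 1/3$ follows by a two-case argument. When $u \leq \sqrt{\varkappa}\,s$, $(\star)$ combined with $k - u \geq k - \sqrt{\varkappa}\,s$ yields $Y(u) \geq 1/2$, hence $(1-aY)^{k-u} \leq (1-a/2)^{k-u} \leq (1-a/2)^{k-\sqrt{\varkappa}\,s}$ since $1-a/2 \in (0,1)$. When $u > \sqrt{\varkappa}\,s$, I would set $\rho \defeq (k-\sqrt{\varkappa}\,s)/(k-u) > 1$; then $(\star)$ gives $Y(u) \geq \rho/2$, and Bernoulli's inequality $(1 - a/2)^{\rho} \geq 1 - \rho\, a/2$ yields $1 - aY(u) \leq 1 - \rho\,a/2 \leq (1-a/2)^{\rho}$, so that $(1-aY)^{k-u} \leq (1-a/2)^{\rho(k-u)} = (1-a/2)^{k-\sqrt{\varkappa}\,s}$. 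Taking the supremum over $u \in [0, \bar{u})$ completes the proof.
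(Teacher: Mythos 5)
Your overall reduction is sound and largely parallels the paper's: writing $\zeta(u)\le\exp(-a\,h(u))$ with $h(u)=(k-u)B(u)^{1/(k-u)}$ is exactly the paper's $\exp(\phi_2(u))/a$, your monotonicity argument on $[\sqrt{\varkappa}s,\bar u)$ (via $(\ln B)'\ge 2$ and the bootstrap keeping $\ln Y\ge -1/2$) is correct, and the identity $B(\sqrt{\varkappa}s)=b$ is the right anchor. However, for the key estimate $(\star)$ on $[s,\sqrt{\varkappa}s]$ two pieces are missing. First, the minimum there need not be an interior critical point: if $h$ is increasing on $(s,\sqrt{\varkappa}s)$ the minimum sits at $u=s$, where your $\mu$-parametrization does not apply and a separate estimate of $h(s)$ is required. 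Second, and more importantly, the "explicit univariate inequality in $\mu$" is where all the difficulty lives, and you have not verified it; it is not univariate (it involves $\mu$, $\ln(1/b)$, $\sqrt{\varkappa}s$ and $k$ coupled by the critical-point relation), and it is false for small $\mu$ unless one first shows that the hypothesis $k\ge 2\ln(1/b)+2\sqrt{\varkappa}s$, pushed through your relation $(1-2\mu)(k-u^*)=\ln(1/b)+2\mu\sqrt{\varkappa}s e^{-\mu}$, excludes critical points with $\mu<(1-\ln 2)/2$ (for larger $\mu$ one has $e^{2\mu-1}\ge 1/2$ and $(\star)$ is immediate). The paper avoids this entirely: it uses only the first-order inequality $h'(u^*)\ge 0$ (valid at interior and endpoint minimizers alike) to lower-bound $\ln(u^*/(\sqrt{\varkappa}s))$, substitutes back, and obtains the stronger constant $e^{-1/2}$ in place of your $1/2$.

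The derivation of \eqref{eq:aux_func_2_upper_2} contains a genuine error in the case $u\le\sqrt{\varkappa}s$. From $(\star)$ you get $Y(u)\ge\tfrac{k-\sqrt{\varkappa}s}{2(k-u)}$, and since $k-u\ge k-\sqrt{\varkappa}s$ this lower bound is \emph{at most} $1/2$; the deduction "$Y(u)\ge 1/2$" reverses the inequality. Moreover $(\star)$ with constant $1/2$ is intrinsically too weak to yield \eqref{eq:aux_func_2_upper_2}, because $\exp(-\tfrac{a}{2}(k-\sqrt{\varkappa}s))\ge(1-\tfrac{a}{2})^{k-\sqrt{\varkappa}s}$ — you need a constant strictly larger than $1/2$ in $(\star)$. (The statement $Y(u)\ge 1/2$ on $[0,\sqrt{\varkappa}s]$ does happen to be true — one can bound $-\ln Y(u)\le 1/2$ directly from $k\ge 2\ln(1/b)+2\sqrt{\varkappa}s$ — but that is an independent computation, not a consequence of $(\star)$.) Your case $u>\sqrt{\varkappa}s$ via Bernoulli is fine modulo $(\star)$. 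Note also that your argument never invokes $a\le 1/3$, which should have been a warning sign: the paper needs it precisely to convert its exponential bound into the product form via $\exp(-a e^{-1/2})\le 1-a/2$.
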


\begin{proof}
    Please refer to Appendix~\ref{proof:aux_func_2}.
\end{proof}

We now prove Proposition~\ref{prop:linear_phase_linear_rates2} as follows.

\begin{proof}[Proof of Proposition~\ref{prop:linear_phase_linear_rates2}]
    Recall that the proof of Proposition~\ref{prop:linear_phase_linear_rates1} has shown that
    \begin{numcases}{}
        \sum_{i = 0}^{k - 1} \ln \dfrac{\cos^{2}(\htheta_{i})}{\hatm_{i}} \geq - \Psi(\bmB_{0}), \label{eq:linear_phase_acc_potential_func} \\
        \sum_{i \in \bar{\fI}_{k}} \ln \cos^{2}(\htheta_{i}) \leq 0, \label{eq:linear_phase_acc_potential_func_rem1} \\
        \sum_{i \in \bar{\fI}_{k}} \ln \hatm_{i} \geq - |\bar{\fI}_{k}| \ln \varkappa. \label{eq:linear_phase_acc_potential_func_rem2}
    \end{numcases}
    Compared with \eqref{eq:linear_phase_acc_potential_func_rem1}, we can also provide another upper bound for $\sum_{i \in \bar{\fI}_{k}} \ln \cos^{2}(\htheta_{i})$. 
    We first consider the case of~$|\bar{\fI}_{k}| \geq 1$. 
    We apply AM-GM inequality and \eqref{eq:linear_phase_large_case_bounds2} to obtain
    \begin{equation*}
        \prod_{i \in \bar{\fI}_{k}} \cos(\htheta_{i}) \leq \left( \frac{\sum_{i \in \bar{\fI}_{k}} \cos(\htheta_{i})}{|\bar{\fI}_{k}|} \right)^{|\bar{\fI}_{k}|} \leq \left( \frac{K_{2}}{|\bar{\fI}_{k}|} \right)^{|\bar{\fI}_{k}|}.
    \end{equation*}
    Therefore, using above upper bound on $\prod_{i \in \bar{\fI}_{k}} \cos(\htheta_{i})$ and following the derivation of \eqref{eq:linear_phase_rates1_bounds2}, we obtain the improved lower bound for $\sum_{i \in \fI_{k}} \ln \frac{\cos^{2}(\htheta_{i})}{\hatm_{i}}$ as follows:
    \begin{equation}\label{eq:bound_3_first_phase_first_case}
        \begin{aligned}
            \sum_{i \in \fI_{k}} \ln \frac{\cos^{2}(\htheta_{i})}{\hatm_{i}} & = \sum_{i = 0}^{k - 1} \ln \frac{\cos^{2}(\htheta_{i})}{\hatm_{i}} - \sum_{i \in \bar{\fI}_{k}} \ln \cos^{2}(\htheta_{i}) + \sum_{i \in \bar{\fI}_{k}} \ln \hatm_{i} \\
            & \geq - \Psi(\bmB_{0}) + \max\biggl\{ 2 |\bar{\fI}_{k}| \ln \biggl( \frac{|\bar{\fI}_{k}|}{K_{2}} \biggr), 0 \biggr\} - |\bar{\fI}_{k}| \ln \varkappa \\
            & = - \Psi(\bmB_{0}) + 2 |\bar{\fI}_{k}| \ln \max\biggl\{\frac{|\bar{\fI}_{k}|}{K_{2}}, 1 \biggr\} - |\bar{\fI}_{k}| \ln \varkappa.
        \end{aligned}
    \end{equation}
    In the case of $|\bar{\fI}_{k}| = 0$, the above lower bound for $\sum_{i \in \fI_{k}} \ln \frac{\cos^{2}(\htheta_{i})}{\hatm_{i}}$ still holds due to \eqref{eq:linear_phase_acc_potential_func}.

    We now consider the case of $|\fI_{k}| \geq 1$, which means $|\bar{\fI}_{k}| < k$. 
    Substituting \eqref{eq:bound_3_first_phase_first_case} into \eqref{eq:linear_phase_terms}, we obtain
    \begin{align}
            \frac{f(\vx_{k}) - f(\vx_{*})}{f(\vx_{0}) - f(\vx_{*})} & \leq \left( 1- \frac{1}{3 \varkappa} \left( \prod_{i \in \fI_{k}} \frac{\cos^{2}(\htheta_{i})}{\hatm_{i}} \right)^{\frac{1}{|\fI_{k}|}} \right)^{|\fI_{k}|} \nonumber\\
            & \leq \left( 1 - \frac{1}{3 \varkappa} \left( \exp \left( - \Psi(\bmB_{0}) \right) \left( \max\left\{ \frac{|\bar{\fI}_{k}|}{K_{2}}, 1 \right\} \right)^{2 |\bar{\fI}_{k}|} \left( \frac{1}{\varkappa} \right)^{|\bar{\fI}_{k}|} \right)^{\frac{1}{|\fI_{k}|}} \right)^{|\fI_{k}|} \nonumber\\
            & = \left( 1 - \frac{1}{3 \varkappa} \left( \exp \left( - \Psi(\bmB_{0}) \right) \left( \max\left\{ \frac{|\bar{\fI}_{k}|}{K_{2}}, 1 \right\} \right)^{2 |\bar{\fI}_{k}|} \left( \frac{1}{\varkappa} \right)^{|\bar{\fI}_{k}|} \right)^{\frac{1}{k - |\bar{\fI}_{k}|}} \right)^{k - |\bar{\fI}_{k}|}. \label{eq:linear-some-line}
    \end{align}
    Applying Lemma~\ref{lem:aux_func_2} with $a = 1 / (3 \varkappa)$, $b = \exp \left(-\Psi(\bmB_{0}) \right)$ and $s = K_{2}$ to \eqref{eq:linear-some-line} and combining with conditions $k \geq 2 \sqrt{\varkappa} K_{2} + 2 \Psi(\bmB_{0})$ and $a \leq 1 / 3$, we obtain
    \begin{equation*}
        \frac{f(\vx_{k}) - f(\vx_{*})}{f(\vx_{0}) - f(\vx_{*})} \leq \left(1 - \frac{1}{6 \varkappa} \right)^{k - \sqrt{\varkappa} K_{2}}.
    \end{equation*}
    This is exactly \eqref{eq:linear_phase_linear_rates2}.

    It remains to rule out the possibility that $|\fI_{k}| = 0$. 
    We proceed by contradiction. 
    Suppose that $|\fI_{k}| = 0$, leading to $|\bar{\fI}_{k}| = k$. 
    Combining \eqref{eq:linear_phase_acc_potential_func} and \eqref{eq:linear_phase_acc_potential_func_rem2}, we have
    \begin{equation}\label{eq:linear-proof-lncos2}
        \sum_{i = 0}^{k - 1} \ln \cos^{2}(\htheta_{i})  \geq - \Psi(\bmB_{0}) + \sum_{i \in \bar{\fI}_{k}} \ln \hatm_{i} 
         \geq - \Psi(\bmB_{0}) - k \ln \varkappa.
    \end{equation}
    Using AM-GM inequality, we have
    \begin{equation*}
        K_{2} \overset{\eqref{eq:linear_phase_large_case_bounds2}}{\geq} \sum_{i = 0}^{k - 1} \cos(\htheta_{i}) \geq k \left( \prod_{i = 0}^{k - 1} \cos(\htheta_{i}) \right)^{\frac{1}{k}} \overset{\eqref{eq:linear-proof-lncos2}}{\geq} k \exp \left( - \frac{\Psi(\bmB_{0}) + k \ln \varkappa}{2 k} \right),
    \end{equation*}
    which leads to
    \begin{equation*}
        \frac{\sqrt{\varkappa} K_{2}}{k} \geq \exp \left( - \frac{\Psi(\bmB_{0})}{2 k} \right).
    \end{equation*}
    However, our proposition requires $k \geq 2 \Psi(\bmB_{0}) + 2 \sqrt{\varkappa} K_{2}$, which means
    \begin{equation*}
        \frac{\sqrt{\varkappa} K_{2}}{k} < \frac{1}{2} 
        \quad \text{and} \quad
        \exp \left( - \frac{\Psi(\bmB_{0})}{2 k} \right) > \exp \left( -\frac{1}{4} \right).
    \end{equation*}
   This implies $1 / 2 > \exp \left( - 1 / 4 \right)$, yielding the contradiction. 
   Therefore, the case $|\fI_{k}| = 0$ cannot occur, so that we complete the proof.
\end{proof}

\begin{remark}    
    Note that both Propositions~\ref{prop:linear_phase_linear_rates1} and~\ref{prop:linear_phase_linear_rates2} enjoy the linear convergence rates of $\fO((1 - 1 / \varkappa)^{k})$, while their iteration thresholds are different.
    Specifically, Proposition~\ref{prop:linear_phase_linear_rates1} has the starting moment of
    \begin{align*}
    2 \Psi(\bmB_{0}) + (2 \ln \varkappa + 1) K_{1} = \fO( \Psi(\bmB_{0}) + M^{2} \Delta \ln \varkappa),
    \end{align*}
    and Proposition~\ref{prop:linear_phase_linear_rates2} has the starting moment of
    \begin{align*}
        2 \Psi(\bmB_{0}) + 2 \sqrt{\varkappa} K_{2} = \fO (\Psi(\bmB_{0}) + \varkappa M \sqrt{\Delta}).
    \end{align*}
    Hence, the results of Proposition~\ref{prop:linear_phase_linear_rates1} are preferable in the case of $M \sqrt{\Delta} \leq \fO({\varkappa}/{\ln \varkappa})$, while Proposition~\ref{prop:linear_phase_linear_rates2} is preferable in the case of $M \sqrt{\Delta} \geq \Omega({\varkappa}/{\ln \varkappa}$).
\end{remark}

We combine Propositions~\ref{prop:linear_phase_linear_rates1} and~\ref{prop:linear_phase_linear_rates2} to obtain the following theorem.

\begin{thm}\label{thm:linear_phase_linear_rates}
    Under Assumptions~\ref{asm:strongly_convex},~\ref{asm:grad_Lip} and~\ref{asm:self_con}, Algorithm~\ref{alg:ada_bfgs} holds
    \begin{equation}\label{eq:linear_phase_linear_rates}
        \frac{f(\vx_{k}) - f(\vx_{*})}{f(\vx_{0}) - f(\vx_{*})} \leq \left( 1 - \frac{1}{6 \varkappa} \right)^{k - K_{3}}
    \end{equation}
    for all $k \geq 2 \Psi(\bmB_{0}) + \min\left\{(2 \ln \varkappa + 1) K_{1} , 2 \sqrt{\varkappa} K_{2}\right\}$, where $\Psi(\cdot)$, $\bmB_{0}$, $K_{1}$ and $K_{2}$ follow the definitions in \cref{def:potential_func,def:bar_B,def:K_1_2}, respectively, and $K_{3}$ is defined as
    \begin{equation}\label{def:K_3}
        K_{3} \defeq \frac{\min\left\{(2 \ln \varkappa + 1) K_{1}, 2 \sqrt{\varkappa} K_{2} \right\}}{\min \left\{ 2 \ln \varkappa + 1, 2 \right\}}.
    \end{equation}
\end{thm}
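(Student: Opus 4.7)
The plan is to combine the two preceding propositions (\ref{prop:linear_phase_linear_rates1} and~\ref{prop:linear_phase_linear_rates2}) by a case split according to which term achieves the minimum in $\min\{(2\ln\varkappa+1) K_{1}, 2\sqrt{\varkappa}K_{2}\}$. In either case, the corresponding proposition already supplies a bound of the form $(1 - 1/(6\varkappa))^{k - C}$ for an appropriate constant $C$, and the only remaining work is to verify that $K_{3}$, as defined in \eqref{def:K_3}, is at least as large as this $C$ so that the bound in \eqref{eq:linear_phase_linear_rates} follows automatically (since $1 - 1/(6\varkappa) \in (0,1)$, replacing an exponent by a larger value only weakens the bound).

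First I would handle Case~1 where $(2\ln\varkappa+1)K_{1} \leq 2\sqrt{\varkappa}K_{2}$. Then the hypothesis $k \geq 2\Psi(\bmB_{0}) + \min\{(2\ln\varkappa+1)K_{1}, 2\sqrt{\varkappa}K_{2}\}$ reduces to $k \geq 2\Psi(\bmB_{0}) + (2\ln\varkappa+1)K_{1}$, which is exactly the hypothesis of Proposition~\ref{prop:linear_phase_linear_rates1}, yielding $(f(\vx_{k})-f(\vx_{*}))/(f(\vx_{0})-f(\vx_{*})) \leq (1 - 1/(6\varkappa))^{k - K_{1}}$. I would then check that in this case
\begin{equation*}
K_{3} = \frac{(2\ln\varkappa+1)K_{1}}{\min\{2\ln\varkappa+1,\,2\}} \geq K_{1},
\end{equation*}
since $\min\{2\ln\varkappa+1,2\} \leq 2\ln\varkappa+1$, so $k - K_{3} \leq k - K_{1}$ and the desired inequality \eqref{eq:linear_phase_linear_rates} follows.

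Next I would handle Case~2 where $(2\ln\varkappa+1)K_{1} > 2\sqrt{\varkappa}K_{2}$. Then the hypothesis reduces to $k \geq 2\Psi(\bmB_{0}) + 2\sqrt{\varkappa}K_{2}$, which is the hypothesis of Proposition~\ref{prop:linear_phase_linear_rates2}, giving $(f(\vx_{k})-f(\vx_{*}))/(f(\vx_{0})-f(\vx_{*})) \leq (1 - 1/(6\varkappa))^{k - \sqrt{\varkappa}K_{2}}$. I would then verify
\begin{equation*}
K_{3} = \frac{2\sqrt{\varkappa}K_{2}}{\min\{2\ln\varkappa+1,\,2\}} \geq \sqrt{\varkappa}K_{2},
\end{equation*}
since $\min\{2\ln\varkappa+1,2\} \leq 2$, so again $k - K_{3} \leq k - \sqrt{\varkappa}K_{2}$, and \eqref{eq:linear_phase_linear_rates} follows.

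There is no real obstacle here; the argument is purely combinatorial bookkeeping on top of the two propositions already proved. The only subtle point is confirming that the definition of $K_{3}$ in \eqref{def:K_3} with the denominator $\min\{2\ln\varkappa+1,2\}$ was chosen precisely so that $K_{3}$ dominates both $K_{1}$ (in Case~1) and $\sqrt{\varkappa}K_{2}$ (in Case~2), which is what makes a single unified statement possible. This normalization factor is what the author built into the definition to absorb the worst case of the two regimes, so once the case split is carried out the combination is immediate.
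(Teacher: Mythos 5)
Your proposal is correct and matches the paper's own proof: the paper performs exactly this case split on which term attains the minimum, invokes Proposition~\ref{prop:linear_phase_linear_rates1} or~\ref{prop:linear_phase_linear_rates2} accordingly, and uses the denominator $\min\{2\ln\varkappa+1,\,2\}$ in \eqref{def:K_3} to check that $K_{3}$ dominates $K_{1}$ (resp.\ $\sqrt{\varkappa}K_{2}$) so the exponent can be weakened to $k-K_{3}$. No gaps.
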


\begin{proof}
    Here we only provide details for the case of $(2 \ln \varkappa + 1) K_{1} \leq 2 \sqrt{\varkappa} K_{2}$, since the other case can be addressed similarly.
    In this case, the linear convergence shown in \eqref{eq:linear_phase_first_rates1} from Proposition~\ref{prop:linear_phase_linear_rates1} begins when the iteration counter satisfies
    \begin{equation*}
        k \geq 2 \Psi(\bmB_{0}) + (2 \ln \varkappa + 1) K_{1} = 2 \Psi(\bmB_{0}) + \min\left\{(2 \ln \varkappa + 1) K_{1} , 2 \sqrt{\varkappa} K_{2}\right\}.
    \end{equation*}
    Moreover, we follow \eqref{eq:linear_phase_first_rates1} to obtain
    \begin{equation*}
        \frac{f(\vx_{k}) - f(\vx_{*})}{f(\vx_{0}) - f(\vx_{*})} \leq \left( 1 - \frac{1}{6 \varkappa} \right)^{k - K_{1}}  = \left( 1 - \frac{1}{6 \varkappa} \right)^{k - \frac{\min\left\{(2 \ln \varkappa + 1) K_{1}, 2 \sqrt{\varkappa} K_{2}\right\}}{2 \ln \varkappa + 1}} \leq \left( 1 - \frac{1}{6 \varkappa} \right)^{k - K_{3}},
    \end{equation*}
    which establishes \eqref{eq:linear_phase_linear_rates}.
\end{proof}

For ease of presentation, we denote the starting moment of linear convergence in Theorem~\ref{thm:linear_phase_linear_rates} by $K_{4}$, i.e.,
\begin{equation}\label{def:K_4}
\begin{aligned}
    K_{4} & \defeq 2 \Psi(\bmB_{0}) + \min\bigl\{ (2 \ln \varkappa + 1) K_{1}, 2 \sqrt{\varkappa} K_{2}  \bigr\} \\
    & = 2 \Psi(\bmB_{0}) + 4 \min\bigl\{ (2 \ln \varkappa + 1) M^{2} \Delta, 2 \sqrt{2} \varkappa M \sqrt{\Delta} \bigr\},
\end{aligned}
\end{equation}
which has the order of
\begin{equation*}
\begin{aligned}
    K_{4} & = \fO\bigl( \Psi(\bmB_{0}) + \min\bigl\{ M^{2} \Delta \ln \varkappa, \varkappa M \sqrt{\Delta}\bigr\} \bigr) \\
    & = \tilde{\fO} \bigl( \Psi(\bmB_{0}) + \min\bigl\{ M^{2} \Delta, \varkappa M \sqrt{\Delta}\bigr\} \bigr).
\end{aligned}
\end{equation*}

\section{The Phase of Local Superlinear Convergence}\label{sec:superlinear_phase}

This section provides the explicit local superlinear convergence rates for adaptive BFGS~(Algorithm~\ref{alg:ada_bfgs}).
Unlike the global analysis in Section~\ref{sec:global-linear-rates}, we follow Proposition~\ref{prop:analy_framework} by taking $\mP = \nabla^{2} f(\vx_{*})$ for the local convergence.
We first present the following lemma.

\begin{lem}[{\cite[Lemma~4 and~5]{jinNonasymptoticGlobalConvergence2025}}]\label{lem:P_star_q_J_bounds}
    Under Assumptions~\ref{asm:strongly_convex} and~\ref{asm:grad_Lip}, Algorithm~\ref{alg:ada_bfgs} holds
    \begin{equation}\label{eq:P_star_q_lower}
        \hq_{k} \geq \frac{2}{(1 + C_{k})^{2}},
    \end{equation}
    and 
    \begin{equation}\label{eq:P_star_J_upper}
        \frac{\Norm{\hvy_{k}}^{2}}{\hvs_{k}^{\T} \hvy_{k}} \leq 1 + C_{k}
    \end{equation}
    for all $k \geq 0$, where 
    \begin{equation}\label{def:C_k}
    C_{k} \defeq 2 \sqrt{2} \tM \sqrt{f(\vx_{k}) - f(\vx_{*})} 
    \end{equation}
    with $\tM = L_{2} / (2 \mu^{3/2})$, 
    $\hq_{k}$ follows the definition in \eqref{def:q} and $\hvy_{k}$ and $\hvs_{k}$ follow the definitions in \eqref{def:wgt_vec} by taking $\mP = \nabla^{2} f(\vx_{*})$.
\end{lem}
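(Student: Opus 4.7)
The plan is to express both $\vg_k$ and $\vy_k = \vg_{k+1} - \vg_k$ as integrals of the Hessian along appropriate segments (via the fundamental theorem of calculus applied to $\nabla f$), and then use Assumption~\ref{asm:Hessian_Lip} (which enters implicitly through $\tM = L_{2}/(2\mu^{3/2})$) together with $P = \nabla^{2} f(\vx_{*}) \succeq \mu \mI$ to compare these averaged Hessians to $P$. Strong convexity then converts spatial distance $\Norm{\vx_{k} - \vx_{*}} \leq \sqrt{2(f(\vx_{k}) - f(\vx_{*}))/\mu}$ into the function-value quantity $\sqrt{f(\vx_{k}) - f(\vx_{*})}$, which is exactly how the constant $C_{k} = 2\sqrt{2}\tM\sqrt{f(\vx_{k}) - f(\vx_{*})}$ arises.

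For \eqref{eq:P_star_q_lower}, I write $\vg_{k} = \nabla f(\vx_{k}) - \nabla f(\vx_{*}) = \mJ_{k}(\vx_{k} - \vx_{*})$ with $\mJ_{k} = \int_{0}^{1} \nabla^{2} f(\vx_{*} + t(\vx_{k} - \vx_{*}))\,dt$. Setting $\hmB_{k}^{J} = \mP^{-1/2}\mJ_{k}\mP^{-1/2}$ and $\hat{\ve}_{k} = \mP^{1/2}(\vx_{k} - \vx_{*})$, one gets $\hvg_{k} = \hmB_{k}^{J}\hat{\ve}_{k}$ and therefore $\Norm{\hvg_{k}}^{2} = \hat{\ve}_{k}^{\T}(\hmB_{k}^{J})^{2}\hat{\ve}_{k}$. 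Assumption~\ref{asm:Hessian_Lip} gives $\Norm{\mJ_{k} - \mP} \leq (L_{2}/2)\Norm{\vx_{k}-\vx_{*}}$, which, combined with $\Norm{\mP^{-1}} \leq 1/\mu$ and the strong-convexity bound on $\Norm{\vx_{k}-\vx_{*}}$, will give $\Norm{\hmB_{k}^{J} - \mI} \leq C_{k}/2$, from which $\Norm{\hvg_{k}} \geq (1-C_{k}/2)\Norm{\hat{\ve}_{k}}$. For the denominator of $\hq_{k}$, I use the identity $f(\vx_{k}) - f(\vx_{*}) = \int_{0}^{1} (1-u)\ve_{k}^{\T} \nabla^{2} f(\vx_{*} + u \ve_{k}) \ve_{k}\,du$, whose hatted version is comparable to $\tfrac{1}{2}\Norm{\hat{\ve}_{k}}^{2}$ up to $O(C_{k})$ multiplicative error, yielding the final ratio bound $\hq_{k} \geq 2/(1+C_{k})^{2}$ after absorbing the numerical constants into the $(1+C_{k})^{2}$ factor.

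For \eqref{eq:P_star_J_upper}, write $\vy_{k} = \tilde{\mJ}_{k} \vs_{k}$ with $\tilde{\mJ}_{k} = \int_{0}^{1} \nabla^{2} f(\vx_{k} + t \vs_{k})\,dt$ and set $\hat{\tilde{\mJ}}_{k} = \mP^{-1/2}\tilde{\mJ}_{k}\mP^{-1/2}$. Then $\hvy_{k} = \hat{\tilde{\mJ}}_{k}\hvs_{k}$, so by the Rayleigh-quotient inequality
\begin{equation*}
    \frac{\Norm{\hvy_{k}}^{2}}{\hvs_{k}^{\T}\hvy_{k}} = \frac{\hvs_{k}^{\T}\hat{\tilde{\mJ}}_{k}^{2}\hvs_{k}}{\hvs_{k}^{\T}\hat{\tilde{\mJ}}_{k}\hvs_{k}} \leq \lambda_{\max}\bigl(\hat{\tilde{\mJ}}_{k}\bigr),
\end{equation*}
and it remains to show $\hat{\tilde{\mJ}}_{k} \preceq (1 + C_{k})\mI$. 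The subtlety is that the integration here is over the segment $[\vx_{k}, \vx_{k+1}]$ rather than $[\vx_{*}, \vx_{k}]$, so I use the triangle inequality $\Norm{\vx_{k} + t\vs_{k} - \vx_{*}} \leq (1-t)\Norm{\vx_{k} - \vx_{*}} + t \Norm{\vx_{k+1} - \vx_{*}}$ and invoke the monotone decrease $f(\vx_{k+1}) \leq f(\vx_{k})$ from \eqref{eq:self_con_f_dec} of Lemma~\ref{lem:ada_bfgs_properties}, which yields $\Norm{\vx_{k+1} - \vx_{*}} \leq \sqrt{2(f(\vx_{k}) - f(\vx_{*}))/\mu}$; averaging over $t \in [0,1]$ then gives $\Norm{\hat{\tilde{\mJ}}_{k} - \mI} \leq C_{k}$.

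The main obstacle is bookkeeping: the coefficient $2\sqrt{2}$ in the definition of $C_{k}$ is calibrated precisely so that the strong-convexity conversion combined with $\tM = L_{2}/(2\mu^{3/2})$ makes $L_{2}\Norm{\vx_{k} - \vx_{*}}/\mu \leq C_{k}$ hold cleanly. After this the bounds on $\hmB_{k}^{J}$ and $\hat{\tilde{\mJ}}_{k}$ assemble routinely; the only delicate step is the segment-$[\vx_{k}, \vx_{k+1}]$ argument for the second claim, which truly needs the algorithm's monotonicity, not just the Lipschitz/strong-convexity structure of $f$.
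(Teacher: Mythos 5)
The paper itself does not prove this lemma: it is imported verbatim as \cite[Lemmas~4 and~5]{jinNonasymptoticGlobalConvergence2025}, so there is no in-paper argument to compare against, and your proposal has to stand on its own. Your treatment of \eqref{eq:P_star_J_upper} does stand: writing $\hvy_k = \mP^{-1/2}\tilde{\mJ}_k\mP^{-1/2}\hvs_k$ with $\tilde{\mJ}_k$ the averaged Hessian over $[\vx_k,\vx_{k+1}]$, using the monotone decrease of $f$ to bound both $\Norm{\vx_k-\vx_*}$ and $\Norm{\vx_{k+1}-\vx_*}$ by $\sqrt{2(f(\vx_k)-f(\vx_*))/\mu}$, and finishing with the Rayleigh-quotient inequality gives exactly $\lambda_{\max}(\mP^{-1/2}\tilde{\mJ}_k\mP^{-1/2})\leq 1+C_k$; the constants check out.

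The proof of \eqref{eq:P_star_q_lower}, however, has a genuine gap. Your additive perturbation bound $\Norm{\mP^{-1/2}\mJ_k\mP^{-1/2}-\mI}\leq C_k/2$ yields $\Norm{\hvg_k}\geq(1-C_k/2)\Norm{\hat{\ve}_k}$, which is vacuous once $C_k\geq 2$, and even before that the resulting ratio $2(1-C_k/2)^2/(1+C_k/3)$ falls below the target $2/(1+C_k)^2$ already around $C_k\approx 0.7$ and vanishes at $C_k=2$. No amount of ``absorbing numerical constants'' can repair this, since the claimed bound is strictly positive for every $C_k$ while your intermediate bound is not. This matters: the lemma must hold for all $k\geq 0$, and at $k=0$ one has $C_0=2\sqrt{2}\tM\sqrt{\Delta}$, which the paper's global analysis explicitly allows to be large. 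The repair is to replace the additive comparison by a relative one: since $\nabla^2 f(\vz)\succeq\mu\mI$, the Hessian-Lipschitz bound gives $\mP\preceq\nabla^2 f(\vz)+L_2\Norm{\vz-\vx_*}\mI\preceq(1+C_k)\nabla^2 f(\vz)$ for every $\vz$ on the segment $[\vx_*,\vx_k]$ (the constant works out to exactly $C_k$), hence $\mJ_k\succeq\frac{1}{1+C_k}\mP$ uniformly in $C_k$; combining this with a matching control of the denominator (e.g., via $f(\vx_k)-f(\vx_*)\leq\vg_k^{\T}(\vx_k-\vx_*)$ or a careful pairing of the two averaged Hessians) is what produces a bound of the form $2/(1+C_k)^2$ for all $C_k\geq 0$. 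As written, your argument only establishes the claim in the small-$C_k$ regime.

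Two minor remarks: the Rayleigh-quotient step and the segment-$[\vx_k,\vx_{k+1}]$ observation are exactly right and are the parts that genuinely need the algorithm's monotonicity; and note that, as you implicitly assume, the proof requires Assumption~\ref{asm:Hessian_Lip} even though the lemma's hypothesis list omits it.
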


We then establish the superlinear convergence rate for adaptive BFGS as follows.

\begin{thm}\label{thm:superlinear_phase_superlinear_rates}
    Under Assumption~\ref{asm:strongly_convex},~\ref{asm:grad_Lip},~\ref{asm:self_con} and~\ref{asm:Hessian_Lip}, Algorithm~\ref{alg:ada_bfgs} holds
    \begin{equation}\label{eq:superlinear_phase_superlinear_rates}
        \frac{f(\vx_{k}) - f(\vx_{*})}{f(\vx_{0}) - f(\vx_{*})} \leq \Biggl( \frac{\Psi(\tmB_{0}) + 8 \sqrt{2} \tM \sqrt{\Delta} \bigl( 2 \Psi(\bmB_{0}) + 4 \min\bigl\{ (2 \ln \varkappa + 1) M^{2} \Delta, 2 \sqrt{2} \varkappa M \sqrt{\Delta}\bigr\} + 13 \varkappa \bigr)}{k} \Biggr)^{k}
    \end{equation}
    for all $k \geq 1$, where $\Psi(\cdot)$ and $\bmB_{0}$ follow the definitions in \cref{def:potential_func,def:bar_B}, respectively, and $\tmB_{0}$ is defined as
    \begin{equation}\label{def:tilde_B}
        \tmB_{0} \defeq \nabla^{2} f(\vx_{*})^{- \frac{1}{2}} \mB_{0} \nabla^{2} f(\vx_{*})^{- \frac{1}{2}}.
    \end{equation}
\end{thm}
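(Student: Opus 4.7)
The plan is to apply Proposition~\ref{prop:analy_framework} with the weight matrix $\mP = \nabla^{2} f(\vx_{*})$, which is the natural choice for local superlinear analysis. Setting $\beta_{i} \defeq \hq_{i} \cos^{2}(\htheta_{i})/[2(1 + 2 M \eta_{i}) \hatm_{i}]$, multiplying \eqref{eq:analy_framework} over the first $k$ iterations, and then combining Lemma~\ref{lem:ineq_arith_geo} (applied with $a = 1$) with the elementary inequality $1 - e^{-x} \leq x$ yields
\[
\frac{f(\vx_{k}) - f(\vx_{*})}{f(\vx_{0}) - f(\vx_{*})} \leq \prod_{i = 0}^{k - 1} (1 - \beta_{i}) \leq \left(1 - \left(\prod_{i = 0}^{k - 1}\beta_{i}\right)^{1/k}\right)^{k} \leq \left(\frac{1}{k}\sum_{i = 0}^{k - 1}\ln\frac{1}{\beta_{i}}\right)^{k}.
\]
The target bound $(D_{1}/k)^{k}$ therefore reduces to proving the uniform-in-$k$ estimate $\sum_{i = 0}^{k - 1}\ln(1/\beta_{i}) \leq D_{1}$.

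I decompose $\ln(1/\beta_{i}) = \ln 2 + \ln(1 + 2 M \eta_{i}) - \ln \hq_{i} + \ln(\hatm_{i}/\cos^{2}(\htheta_{i}))$ and bound each piece separately. With $C_{i} = 2\sqrt{2}\tM\sqrt{f(\vx_{i}) - f(\vx_{*})}$, Lemma~\ref{lem:P_star_q_J_bounds} gives $\hq_{i} \geq 2/(1 + C_{i})^{2}$, so using $\ln(1 + C_{i}) \leq C_{i}$ one gets $\sum(\ln 2 - \ln \hq_{i}) \leq 2 \sum C_{i}$. Applying Proposition~\ref{prop:potential_func_upper} with $\mP = \nabla^{2} f(\vx_{*})$ (so that $\hmB_{0} = \tmB_{0}$) together with the complementary bound $\Norm{\hvy_{i}}^{2}/(\hvs_{i}^{\T} \hvy_{i}) - 1 \leq C_{i}$ from Lemma~\ref{lem:P_star_q_J_bounds} yields $\sum \ln(\hatm_{i}/\cos^{2}(\htheta_{i})) \leq \Psi(\tmB_{0}) + \sum C_{i}$. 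Hence three of the four pieces together already contribute at most $\Psi(\tmB_{0}) + 3\sum C_{i}$.

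To control $\sum C_{i} = 2\sqrt{2}\tM\sum \sqrt{f(\vx_{i}) - f(\vx_{*})}$ uniformly in $k$, I invoke the global linear convergence established in Theorem~\ref{thm:linear_phase_linear_rates}. Indices $i < K_{4}$ trivially contribute at most $K_{4}\sqrt{\Delta}$; for $i \geq K_{4}$, the estimate $\sqrt{f(\vx_{i}) - f(\vx_{*})} \leq \sqrt{\Delta}(1 - 1/(6\varkappa))^{(i - K_{3})/2}$ produces a geometric series summing to at most $12\varkappa\sqrt{\Delta}$ via the elementary bound $1 - \sqrt{1 - x} \geq x/2$. Combining and absorbing minor slack into a $13\varkappa$ constant gives $\sum \sqrt{f(\vx_{i}) - f(\vx_{*})} \leq (K_{4} + 13\varkappa)\sqrt{\Delta}$, hence $\sum C_{i} \leq 2\sqrt{2}\tM\sqrt{\Delta}(K_{4} + 13\varkappa)$.

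The main technical obstacle is the remaining piece $\sum \ln(1 + 2 M \eta_{i})$, which I handle by splitting the index set along $\fI_{\infty}$. For $i \notin \fI_{\infty}$ (i.e., $M \eta_{i} \geq 1$), the tail bound $\omega(z) \geq z/4$ of Lemma~\ref{lem:omega_bounds} combined with the descent inequality \eqref{eq:self_con_f_dec} yields $\sum_{i \notin \fI_{\infty}} M \eta_{i} \leq K_{1}$, which is in turn absorbed into $K_{4}$ via the relation $K_{4} \geq 4 K_{1}$ (following from $K_{4} \geq 4 \min\{(2\ln\varkappa + 1) K_{1}, 2\sqrt{2}\varkappa K_{2}\}$ and elementary lower bounds on each branch). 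For $i \in \fI_{\infty}$, the quadratic lower bound $\omega(z) \geq z^{2}/4$ combined with \eqref{eq:self_con_f_dec} forces $M \eta_{i} \leq 2 M \sqrt{f(\vx_{i}) - f(\vx_{i+1})} \leq 2 M \sqrt{f(\vx_{i}) - f(\vx_{*})}$, so summing over $i \in \fI_{\infty}$ and using $\ln(1 + x) \leq x$ produces a contribution controlled by $\tM\sqrt{\Delta}(K_{4} + 13\varkappa)$ through the linear-convergence estimate above and the inequality $M \leq \tM$. Consolidating all four pieces gives $\sum \ln(1/\beta_{i}) \leq \Psi(\tmB_{0}) + 8\sqrt{2}\tM\sqrt{\Delta}(K_{4} + 13\varkappa) = D_{1}$. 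The delicate part of this final step is to track constants carefully so that the two-regime estimate on $\sum M \eta_{i}$ does not introduce an extra $\sqrt{\varkappa}$ factor that would spoil the clean form of $D_{1}$.
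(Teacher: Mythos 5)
Your overall architecture coincides with the paper's: same choice $\mP=\nabla^{2}f(\vx_{*})$, same product/AM--GM/$1-e^{-x}\leq x$ manipulation via Proposition~\ref{prop:analy_framework} and Lemma~\ref{lem:ineq_arith_geo}, the same use of Proposition~\ref{prop:potential_func_upper} with Lemma~\ref{lem:P_star_q_J_bounds} to control the potential and $\hq_i$ terms, and the same split of $\sum_{i}\sqrt{f(\vx_i)-f(\vx_*)}$ at $\ceil{K_4}$ using Theorem~\ref{thm:linear_phase_linear_rates} to obtain $(K_4+13\varkappa)\sqrt{\Delta}$. The one place you genuinely diverge is the term $\sum_i\ln(1+2M\eta_i)$, and that is where your argument breaks. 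First, your claimed absorption of the $i\notin\fI_\infty$ contribution rests on $K_4\geq 4K_1$, which is false in general: when $M\sqrt{\Delta}\gg\varkappa$ the minimum in \eqref{def:K_4} selects the branch $2\sqrt{\varkappa}K_2=8\sqrt{2}\varkappa M\sqrt{\Delta}$, which can be arbitrarily smaller than $K_1=4M^2\Delta$ (e.g.\ $M\sqrt{\Delta}=100\varkappa$ gives $K_1=4\cdot10^4\varkappa^2$ versus $8\sqrt{2}\cdot10^2\varkappa^2$). Second, even on $\fI_\infty$ your quadratic bound $\omega(z)\geq z^2/4$ yields $M\eta_i\leq 2M\sqrt{f(\vx_i)-f(\vx_*)}$ and hence a contribution with prefactor $4M$ per index, which already exceeds the $2\sqrt{2}M$ budget implicit in the stated constant $8\sqrt{2}\tM$ once combined with the $3\sum_i C_i$ term; so your route cannot recover the exact constant in \eqref{eq:superlinear_phase_superlinear_rates}, only a weaker $D_1'$ of the same order.

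The paper avoids both problems with a single uniform estimate that you already have available but did not use: \eqref{eq:omega_inv_upper} of Lemma~\ref{lem:omega_bounds} gives $\omega^{-1}(y)\leq y+\sqrt{2y}$, and since \eqref{eq:self_con_f_dec} yields $\omega(M\eta_k)\leq M^2(f(\vx_k)-f(\vx_*))$, one obtains for every $k$ (no case split on $\fI_\infty$)
\begin{equation*}
    1+2M\eta_k \;\leq\; 1+2\sqrt{2}M\sqrt{f(\vx_k)-f(\vx_*)}+2M^2\bigl(f(\vx_k)-f(\vx_*)\bigr)
    \;=\;\Bigl(1+\sqrt{2}M\sqrt{f(\vx_k)-f(\vx_*)}\Bigr)^{2},
\end{equation*}
whence $\ln(1+2M\eta_k)\leq 2\sqrt{2}M\sqrt{f(\vx_k)-f(\vx_*)}$. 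Substituting this in place of your two-regime estimate, the four pieces sum to $\Psi(\tmB_0)+(6\sqrt{2}\tM+2\sqrt{2}M)\sum_i\sqrt{f(\vx_i)-f(\vx_*)}\leq\Psi(\tmB_0)+8\sqrt{2}\tM\sum_i\sqrt{f(\vx_i)-f(\vx_*)}$ using $M\leq\tM$, and the rest of your argument then delivers exactly the stated $D_1$.
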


\begin{proof}
    Let $\mP = \nabla^{2} f(\vx_{*})$, then Lemma~\ref{lem:P_star_q_J_bounds} says $\hq_{k} \geq 2/ (1 + C_{k})^{2}$. 
    Combing with Proposition~\ref{prop:analy_framework}, we have
    \begin{equation*}
        \frac{f(\vx_{k + 1}) - f(\vx_{*})}{f(\vx_{k}) - f(\vx_{*})} \leq 1 - \frac{\cos^{2}(\htheta_{k})}{(1 + C_{k})^{2} (1 + 2 M \eta_{k})\hatm_{k}}
    \end{equation*}
    for all $k \geq 0$.
    By taking the product over the above inequality and combining it with lemma~\ref{lem:ineq_arith_geo}, we have
    \begin{equation}\label{eq:superlinear_phase_product}
    \begin{aligned}
        \frac{f(\vx_{k}) - f(\vx_{*})}{f(\vx_{0}) - f(\vx_{*})} & 
       = \prod_{i = 0}^{k - 1} \frac{f(\vx_{i+1})-f(\vx_*)}{f(\vx_i)-f(\vx_*)} \\
        & \leq \prod_{i = 0}^{k - 1} \left( 1 - \frac{1}{(1 + C_{i})^{2} (1 + 2 M \eta_{i})} \frac{\cos^{2}(\htheta_{i})}{\hatm_{i}} \right) \\
        & \leq \left( 1 - \left( \prod_{i = 1}^{k - 1} \frac{1}{(1 + C_{i})^{2} (1 + 2 M \eta_{i})} \frac{\cos^{2}(\htheta_{i})}{\hatm_{i}} \right)^{\frac{1}{k}} \right)^{k} \\
        & = \left( 1 - \exp \left( - \frac{2 \sum_{i = 0}^{k - 1} \ln (1 + C_{i}) + \sum_{i = 0}^{k - 1} \ln (1 + 2 M \eta_{i})}{k} \right) \left( \prod_{i = 0}^{k - 1} \frac{\cos^{2}(\htheta_{i})}{\hatm_{i}} \right)^{\frac{1}{k}} \right)^{k}.
    \end{aligned}
    \end{equation}
    According to \eqref{eq:acc_potential_func} of Proposition~\ref{prop:potential_func_upper} and \eqref{eq:P_star_J_upper} of Lemma~\ref{lem:P_star_q_J_bounds}, we conclude that
    \begin{equation}\label{eq:superlinear_phase_first_term}
        \sum_{i = 0}^{k - 1} \ln\frac{\cos^{2}(\htheta_{i})}{\hatm_{i}} \geq - \Psi(\tmB_{0}) - \sum_{i = 0}^{k - 1} C_{i}.
    \end{equation}
    Moreover, combining \eqref{eq:self_con_f_dec} with \eqref{eq:omega_inv_upper} of Lemma~\ref{lem:omega_bounds}, we obtain
    \begin{equation*}
        M \eta_{k} \leq \omega^{-1} \left( M^{2} (f(\vx_{k}) - f(\vx_{*})) \right) \leq M^{2} (f(\vx_{k}) - f(\vx_{*})) + \sqrt{2} M \sqrt{f(\vx_{k}) - f(\vx_{*})},
    \end{equation*}
    which implies
    \begin{equation}\label{eq:superlinear_phase_second_term}
    \begin{aligned}
        1 + 2 M \eta_{k} \leq & \ 1 + 2 \sqrt{2} M \sqrt{f(\vx_{k}) - f(\vx_{*})} + 2 M^{2} (f(\vx_{k}) - f(\vx_{*})) \\
        = & \left( 1 + \sqrt{2} M \sqrt{f(\vx_{k}) - f(\vx_{*})} \right)^{2}.
    \end{aligned}
    \end{equation}
    Substituting \cref{eq:superlinear_phase_first_term,eq:superlinear_phase_second_term} into \eqref{eq:superlinear_phase_product} leads to
    \begin{equation}\label{eq:superlinear_phase_inter}
    \begin{aligned}
        \frac{f(\vx_{k}) - f(\vx_{*})}{f(\vx_{0}) - f(\vx_{*})} & ~\overset{\mathclap{\eqref{eq:superlinear_phase_first_term}}}{\leq}~ \Biggl( 1 - \exp \Biggl( - \frac{\Psi(\tmB_{0}) + \sum_{i = 0}^{k - 1} C_{i} + 2 \sum_{i = 0}^{k - 1} \ln (1 + C_{i}) + \sum_{i = 0}^{k - 1} \ln (1 + 2 M \eta_{i}) }{k}\Biggr) \Biggr)^{k} \\
        & ~\leq~ \left( \frac{\Psi(\tmB_{0}) + \sum_{i = 0}^{k - 1} C_{i} + 2 \sum_{i = 0}^{k - 1} \ln (1 + C_{i}) + \sum_{i = 0}^{k - 1} \ln (1 + 2 M \eta_{i})}{k} \right)^{k} \\
        & ~\overset{\mathclap{\eqref{eq:superlinear_phase_second_term}}}{\leq}~ \left( \frac{\Psi(\tmB_{0}) + \sum_{i = 0}^{k - 1} C_{i} + 2 \sum_{i = 0}^{k - 1} \ln (1 + C_{i}) + 2 \sum_{i = 0}^{k - 1} \ln \bigl(1 + \sqrt{2} M \sqrt{f(\vx_{i}) - f(\vx_{*})}\bigr) }{k} \right)^{k} \\
        & ~\leq~ \left( \frac{\Psi(\tmB_{0}) + 3 \sum_{i = 0}^{k - 1} C_{i} + 2 \sqrt{2} M \sum_{i = 0}^{k - 1} \sqrt{f(\vx_{i}) - f(\vx_{*})}}{k} \right)^{k} \\
        & ~\overset{\mathclap{\eqref{def:C_k}}}{\leq}~ \left( \frac{\Psi(\tmB_{0}) + 8 \sqrt{2} \tM \sum_{i = 0}^{k - 1} \sqrt{f(\vx_{k}) - f(\vx_{*})}}{k} \right)^{k},
    \end{aligned}
    \end{equation}
    where the second inequality uses the fact that $1 - \exp (- u) \leq u$ for all $u \geq 0$, the fourth inequality employs $\ln (1 + u) \leq u$ for all $u \geq 0$, and the final inequality is based on the definition of $C_{k}$ from \eqref{def:C_k} and the fact $M \leq \tM$.
    
    We now estimates the term $\sum_{i = 0}^{k - 1} \sqrt{f(\vx_{i}) - f(\vx_{*})}$ by splitting it as follows:
    \begin{equation*}
        \sum_{i = 0}^{k - 1} \sqrt{f(\vx_{i}) - f(\vx_{*})} = \sum_{i = 0}^{\ceil{K_{4}} - 1} \sqrt{f(\vx_{i}) - f(\vx_{*})} + \sum_{i = \ceil{K_{4}}}^{k - 1} \sqrt{f(\vx_{i}) - f(\vx_{*})}.
    \end{equation*}
    For the first part, the decease of $f(\vx_{k})$ achieved by \eqref{eq:self_con_f_dec} implies
    \begin{equation*}
        \sum_{i = 0}^{\ceil{K_{4}} - 1} \sqrt{f(\vx_{i}) - f(\vx_{*})} \leq \ceil{K_{4}}\sqrt{f(\vx_{0}) - f(\vx_{*})} \leq (K_{4} + 1) \sqrt{\Delta}.
    \end{equation*}
    For the second part, the linear convergence rate from Theorem~\ref{thm:linear_phase_linear_rates} implies
    \begin{equation*}
        \sum_{i = \ceil{K_{4}}}^{k - 1} \sqrt{f(\vx_{i}) - f(\vx_{*})} \leq \Biggl( \sum_{i = \ceil{K_{4}}}^{k - 1} \gamma^{\frac{i - K_{3}}{2}} \Biggr) \sqrt{\Delta}\leq \Biggl( \sum_{i = 0}^{+\infty} \gamma^{\frac{i}{2}} \Biggr) \sqrt{\Delta} = \frac{1}{1 - \sqrt{\gamma}} \sqrt{\Delta},
    \end{equation*}
    where $\gamma = 1 - 1 / (6 \varkappa)$.
    Combining above results yields
    \begin{equation}\label{eq:superlinear_phase_sum_sqrt_f}
    \begin{aligned}
        \sum_{i = 0}^{k - 1} \sqrt{f(\vx_{i}) - f(\vx_{*})} \leq & \left( K_{4} + 1 + \frac{1}{1 - \sqrt{\gamma}} \right) \sqrt{\Delta} = \left( K_{4} + 1 + \frac{1}{1 - \sqrt{1 - 1 / (6 \varkappa)}} \right) \sqrt{\Delta} \\
        \leq & \left( K_{4} + 1 + \frac{1}{1 - \left(1 - 1 / (12 \varkappa)\right)}\right) \sqrt{\Delta} < (K_{4} + 13 \varkappa) \sqrt{\Delta},
      \end{aligned}
    \end{equation}
    where the second inequality uses the fact that $\sqrt{1 - u} \leq 1 - u / 2$ for all $u \in [0, 1]$.
    
    By substituting \cref{eq:superlinear_phase_sum_sqrt_f,def:K_4} into \eqref{eq:superlinear_phase_inter}, we obtain the desired result shown in \eqref{eq:superlinear_phase_superlinear_rates}.
\end{proof}

The result of \eqref{eq:superlinear_phase_superlinear_rates} indicates that the starting moment of superlinear convergence is
\begin{equation*}
    \begin{aligned}
       & \Psi(\tmB_{0}) + 8 \sqrt{2} \tM \sqrt{\Delta} \bigl( 2 \Psi(\bmB_{0}) + 4 \min\bigl\{ (2 \ln \varkappa + 1) M^{2} \Delta, 2 \sqrt{2} \varkappa M \sqrt{\Delta}\bigr\} + 13 \varkappa\bigr)  \\
       = & \tilde{\fO} \Bigl( \Psi(\tmB_{0}) + \tM \sqrt{\Delta} \bigl( \Psi(\bmB_{0}) + \min\bigl\{ M^{2} \Delta, \varkappa M \sqrt{\Delta}\bigr\} + \varkappa \bigr) \Bigr).
    \end{aligned}
\end{equation*}

\section{The Smoothness Aided Adaptive Step Size}\label{sec:improved}

In this section, we introduce a new adaptive strategy by considering the gradient Lipschitz continuity, which ensures that the step size is never too small to achieve the better convergence behavior. 

\subsection{The Derivation of the Smoothness Aided Step Size}\label{sec:dsass}

Based on the definition of $\eta_{k}$ in \eqref{def:eta}, the step size $t_{k}$ in Algorithm~\ref{alg:ada_bfgs}~(line~\ref{line:tk}) can be written as 
\begin{equation}\label{eq:adaptive_stepsize1}
    t_{k} = \frac{\eta_{k}}{(1 + M \eta_{k})\lNorm{\vd_{k}}{\vx_{k}}},
\end{equation}
which follows that $t_{k} < 1 / (M \lNorm{\vd_{k}}{\vx_{k}})$. 
Therefore, \eqref{eq:adaptive_stepsize1} will be very small for large $\lNorm{\vd_{k}}{\vx_{k}}$. 
On the other hand, the smoothness~(Assumption~\ref{asm:grad_Lip}) of the objective $f$ implies
\begin{equation*}
    f(\vx_{k} + t \vd_{k}) \leq f(\vx_{k}) + t \vg_{k}^{\T} \vd_{k} + \frac{1}{2} L t^{2} \Norm{\vd_{k}}^{2}.
\end{equation*}
Hence, minimizing the right-hand side results in the step size $t_k= - \vg_{k}^{\T} \vd_{k}/(L \Norm{\vd_{k}}^{2})$, which may be larger than the upper bound $ 1 /(M \lNorm{\vd_{k}}{\vx_{k}})$ for \eqref{eq:adaptive_stepsize1}. 
In the following lemma, we demonstrate that the upper (lower) bounds in Lemma~\ref{lem:self_con_bound} can be sharpened by additional assuming the objective is smooth and strongly convex, which is crucial to achieve our new step size.

\begin{lem}\label{lem:self_con_bound_improved}
    Under Assumptions~\ref{asm:strongly_convex},~\ref{asm:grad_Lip} and~\ref{asm:self_con}, we have
    \begin{align}
        & \nabla f(\vx + t \vd)^{\T} \vd \geq \begin{cases}
            \nabla f(\vx)^{\T} \vd + \dfrac{t \lNorm{\vd}{\vx}^{2}}{1 + M t \lNorm{\vd}{\vx}}, & 0 \leq t \leq t^{l}, \\[0.3cm]
            \nabla f(\vx)^{\T} \vd + \dfrac{t^{l} \lNorm{\vd}{\vx}^{2} }{1 + M t^{l} \lNorm{\vd}{\vx}} + \mu (t - t^{l}) \Norm{\vd}^{2}, & t > t^{l},
        \end{cases} \label{eq:self_con_grad_lower_improved} \\
        & \nabla f(\vx + t \vd)^{\T} \vd \leq \begin{cases}
            \nabla f(\vx)^{\T} \vd + \dfrac{t \lNorm{\vd}{\vx}^{2}}{1 - M t \lNorm{\vd}{\vx}}, & 0 \leq t \leq t^{u}, \\[0.3cm]
            \nabla f(\vx)^{\T} \vd + \dfrac{t^{u} \lNorm{\vd}{\vx}^{2}}{1 - M t^{u} \lNorm{\vd}{\vx}} + L (t - t^{u}) \Norm{\vd}^{2}, & t > t^{u},
        \end{cases} \label{eq:self_con_grad_upper_improved} \\
        & f(\vx + t \vd) \leq \begin{cases}
            f(\vx) + t \nabla f(\vx)^{\T} \vd + \dfrac{\omega_{*}(M t \lNorm{\vd}{\vx})}{M^{2}} , & 0 \leq t \leq t^{u}, \\[0.3cm]
            f(\vx) + t \nabla f(\vx)^{\T} \vd + \dfrac{\omega_{*}(M t^{u} \lNorm{\vd}{\vx})}{M^{2}}  + \dfrac{t^{u} (t - t^{u}) \lNorm{\vd}{\vx}^{2} }{1 - M t^{u} \lNorm{\vd}{\vx}} + \dfrac{1}{2}L(t - t^{u})^{2} \Norm{\vd}^{2}, & t > t^{u}
        \end{cases} \label{eq:self_con_f_upper_improved}
    \end{align}
    for all $\vx \in \BR^{n}$, $\vd \in \BR^{n}$ such that $\vd\neq\vzero$, and $t \geq 0$, 
    where $\omega_{*}(z) = - z - \ln(1 - z)$ for $z \in [0, 1)$,
    \begin{equation}\label{eq:def_t_l_u}
        t^{l} \defeq \frac{1}{M \lNorm{\vd}{\vx}} \left( \frac{\lNorm{\vd}{\vx}}{\sqrt{\mu} \Norm{\vd}} - 1 \right), \quad \text{and} \quad
        t^{u} \defeq \frac{1}{M \lNorm{\vd}{\vx}} \left( 1 - \frac{\lNorm{\vd}{\vx}}{\sqrt{L}\Norm{\vd}} \right).
    \end{equation}
\end{lem}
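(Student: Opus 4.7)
The plan is to reduce all three inequalities to the behavior of the single univariate function $\phi(t)\defeq \nabla f(\vx+t\vd)^\T\vd$, whose derivative is $\phi'(t)=\vd^\T\nabla^2 f(\vx+t\vd)\vd$. The standard consequence of $M$-self-concordance (the same one that already underlies Lemma~\ref{lem:self_con_bound}) gives the sandwich
\begin{equation*}
\frac{\lNorm{\vd}{\vx}^{2}}{(1+Mt\lNorm{\vd}{\vx})^{2}} \;\leq\; \phi'(t) \;\leq\; \frac{\lNorm{\vd}{\vx}^{2}}{(1-Mt\lNorm{\vd}{\vx})^{2}},
\end{equation*}
valid on $[0,+\infty)$ for the lower bound and on $[0,\,1/(M\lNorm{\vd}{\vx}))$ for the upper bound. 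Strong convexity and gradient Lipschitz continuity contribute the constant sandwich $\mu\Norm{\vd}^{2}\leq \phi'(t)\leq L\Norm{\vd}^{2}$.

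First I would combine these two sandwiches into sharper piecewise bounds by taking the pointwise maximum (for the lower bound) and minimum (for the upper bound). A short calculation shows that the self-concordance lower bound dominates $\mu\Norm{\vd}^{2}$ precisely when $t\leq t^{l}$ as defined in \eqref{eq:def_t_l_u}; by continuity, equality holds at the crossover $t=t^{l}$. Symmetrically, the self-concordance upper bound stays below $L\Norm{\vd}^{2}$ exactly for $t\leq t^{u}$, with equality at $t=t^{u}$ (and $t^{u}<1/(M\lNorm{\vd}{\vx})$, so the singularity is never reached). This yields
\begin{equation*}
\phi'(t)\geq \begin{cases} \lNorm{\vd}{\vx}^{2}/(1+Mt\lNorm{\vd}{\vx})^{2}, & t\leq t^{l},\\ \mu\Norm{\vd}^{2}, & t> t^{l},\end{cases}\qquad
\phi'(t)\leq \begin{cases} \lNorm{\vd}{\vx}^{2}/(1-Mt\lNorm{\vd}{\vx})^{2}, & t\leq t^{u},\\ L\Norm{\vd}^{2}, & t> t^{u}.\end{cases}
\end{equation*}

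Next I would integrate these inequalities in $t$ from $0$ to obtain \eqref{eq:self_con_grad_lower_improved} and \eqref{eq:self_con_grad_upper_improved}. The antiderivative of $\lNorm{\vd}{\vx}^{2}/(1\pm Ms\lNorm{\vd}{\vx})^{2}$ gives the familiar $\pm t\lNorm{\vd}{\vx}^{2}/(1\pm Mt\lNorm{\vd}{\vx})$ that appeared in Lemma~\ref{lem:self_con_bound}, while integration of the constant regime contributes the linear terms $\mu(t-t^{l})\Norm{\vd}^{2}$ and $L(t-t^{u})\Norm{\vd}^{2}$. For \eqref{eq:self_con_f_upper_improved}, I would integrate $\phi(s)-\phi(0)$ over $s\in[0,t]$ a second time; the first piece reproduces $\omega_{*}(Mt^{u}\lNorm{\vd}{\vx})/M^{2}$ (using the identity $\int_{0}^{v}u/(1-u)\,du=\omega_{*}(v)$), and the second piece contributes the linear-in-$(t-t^{u})$ correction and the quadratic $\tfrac{1}{2}L(t-t^{u})^{2}\Norm{\vd}^{2}$.

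The only subtlety I expect is the bookkeeping at the crossover $t=t^{l}$ or $t=t^{u}$: one must check that $t^{l},t^{u}\geq 0$ (equivalently $\sqrt{\mu}\Norm{\vd}\leq\lNorm{\vd}{\vx}\leq\sqrt{L}\Norm{\vd}$, which is immediate from Assumptions~\ref{asm:strongly_convex}--\ref{asm:grad_Lip}), that the piecewise bounds are continuous at the crossover (so that the integration splits cleanly), and that $Mt^{u}\lNorm{\vd}{\vx}<1$ so that $\omega_{*}$ is well-defined. Everything else is routine calculus, so the lemma will follow once these matching conditions are recorded.
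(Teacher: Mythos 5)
Your proposal is correct and follows essentially the same route as the paper: both arguments bound $\varphi(t)=\vd^{\T}\nabla^{2}f(\vx+t\vd)\vd$ from the self-concordance sandwich together with $\mu\Norm{\vd}^{2}\leq\varphi(t)\leq L\Norm{\vd}^{2}$, identify $t^{l}$ and $t^{u}$ as the crossover points of the pointwise max/min, and then integrate once for \eqref{eq:self_con_grad_lower_improved}--\eqref{eq:self_con_grad_upper_improved} and twice for \eqref{eq:self_con_f_upper_improved}. The matching conditions you flag (nonnegativity of $t^{l},t^{u}$, continuity at the crossover, and $Mt^{u}\lNorm{\vd}{\vx}<1$) are exactly the points the paper's proof verifies.
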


\begin{proof}
    Please refer to Appendix~\ref{proof:self_con_bound_improved}.
\end{proof}

For a given descent direction $\vd_k\in\BR^n$ such that $\vg_{k}^{\T} \vd_{k} < 0$, minimizing the right-hand side of \eqref{eq:self_con_f_upper_improved} with respect to $t$ by taking $\vx=\vx_k$ and $\vd=\vd_k$ leads to the smooth aided adaptive step size
\begin{equation}\label{eq:adaptive_stepsize_improved}
    \ttt_{k} = \begin{cases}
        \dfrac{\eta_{k}}{(1 + M \eta_{k}) \lNorm{\vd_{k}}{\vx_{k}}}, & (1 + M \eta_{k}) \alpha_{k} \leq 1, \\[0.3cm]
        \dfrac{M \eta_{k} \alpha_{k}^{2} + (1 - \alpha_{k})^{2}}{M \lNorm{\vd_{k}}{\vx_{k}}}, & (1 + M \eta_{k}) \alpha_{k} > 1,
    \end{cases}
\end{equation}
where $\eta_{k}$ is defined in \eqref{def:eta}, and $\alpha_{k}$ is defined as
\begin{equation}\label{def:alpha}
    \alpha_{k} \defeq \frac{\lNorm{\vd_{k}}{\vx_{k}}}{\sqrt{L}\Norm{\vd_{k}}}.
\end{equation}
We formally prove that our $\ttt_k$ minimizes the right-hand side of \eqref{eq:self_con_f_upper_improved} in Appendix~\ref{proof:ada_stepsize_improved}.
Recall that BFGS methods \eqref{eq:quasi_update} update the variable along with the direction $\vd_{k} = -\mB_{k}^{-1} \vg_{k}$. 
Combining with the step size $\ttt_k$, we achieve the smoothness aided adaptive BFGS ({\SABFGS}) in Algorithm~\ref{alg:ada_bfgs_improved}.

\begin{algorithm}[t]
    \caption{Smoothness Aided Adaptive BFGS ({\SABFGS})}
    \label{alg:ada_bfgs_improved}
    \begin{algorithmic}[1]
        \STATE $\vx_{0} \in \BR^{n}$, $M > 0$, $L > 0$, $\mB_{0} \succ \vzero$ \\[0.15cm]
        \STATE \textbf{for} {$k = 0, 1, 2, \dots$} \textbf{do} \\[0.15cm]
        \STATE \quad $\vg_{k} = \nabla f(\vx_{k})$, \ $\vd_{k} = - \mB_{k}^{-1} \nabla f(\vx_{k})$ \\[0.15cm]
        \STATE \quad $\eta_{k} = -\dfrac{\vg_{k}^{\T} \vd_{k}}{\lNorm{\vd_{k}}{\vx_{k}}}$, \ $\alpha_{k} = \dfrac{\lNorm{\vd_{k}}{\vx_{k}}}{\sqrt{L} \Norm{\vd_{k}}}$ \\[0.15cm]
        \STATE \quad \textbf{if} {$(M \eta_{k} + 1) \alpha_{k} \leq 1$} \textbf{then} \\[0.15cm]
        \STATE \quad \quad $\ttt_{k} = \dfrac{\eta_{k}}{(1 + M \eta_{k}) \lNorm{\vd_{k}}{\vx_{k}}}$ \\[0.15cm]
        \STATE \quad \textbf{else} \\[0.15cm]
        \STATE \quad \quad $\ttt_{k} = \dfrac{M \eta_{k} \alpha_{k}^{2} + (1 - \alpha_{k})^{2}}{M \lNorm{\vd_{k}}{\vx_{k}}}$ \\[0.15cm]
        \STATE \quad \textbf{end if} \\[0.15cm]
        \STATE \quad $\vx_{k + 1} = \vx_{k} + \ttt_{k} \vd_{k}$ \\[0.15cm]
        \STATE \quad $\vs_{k} = \vx_{k+1}-\vx_k$, \ $\vy_{k} = \nabla f(\vx_{k + 1}) - \nabla f(\vx_{k})$ \\[0.15cm]
        \STATE \quad $\mB_{k + 1} = \mB_{k} - \dfrac{\mB_{k} \vs_{k} \vs_{k}^{\T} \mB_{k}}{\vs_{k}^{\T} \mB_{k} \vs_{k}} + \dfrac{\vy_{k} \vy_{k}^{\T}}{\vy_{k}^{\T} \vs_{k}}$ \\[0.15cm]
        \STATE \textbf{end for}
    \end{algorithmic}
\end{algorithm}

\subsection{Convergence Analysis}

We now present the convergence analysis of our {\SABFGS}~(Algorithm~\ref{alg:ada_bfgs_improved}).
The framework of the proof follows the counterpart in Sections~\ref{sec:linear_phase} and~\ref{sec:superlinear_phase}.
We first show the decrease in the function value and the curvature condition for the decent directions and the step size for {\SABFGS}~(Algorithm~\ref{alg:ada_bfgs_improved}) as follows.

\begin{lem}\label{lem:ada_bfgs_improved_properties}
    Under Assumptions~\ref{asm:strongly_convex},~\ref{asm:grad_Lip} and~\ref{asm:self_con}, we let $\vx_{k + 1} = \vx_{k} + \ttt_{k} \vd_{k}$, where $\vd_{k} \in \BR^{n}$ is a descent direction such that $\vg_{k}^{\T} \vd_{k} < 0$ and $\ttt_{k}$ follows \eqref{eq:adaptive_stepsize_improved}, then we have
    \begin{align}
        & f(\vx_{k + 1}) \leq f(\vx_{k}) - \frac{\omega(M \eta_{k})}{M^{2}}, \label{eq:self_con_f_dec_improved} \\
        & f(\vx_{k + 1}) - f(\vx_{k}) \leq \frac{1}{2} \vg_{k}^{\T} (\vx_{k + 1} - \vx_{k}), \label{eq:armijo_condition_improved} \\
        & \min\left\{ \frac{2 M \eta_{k}}{1 + 2 M \eta_{k}}, 1 - \frac{1}{\varkappa} \right\} \vg_{k}^{\T} \vd_{k} \leq \vg_{k + 1}^{\T} \vd_{k} \leq 0. \label{eq:curvature_condition_improved}
    \end{align}
\end{lem}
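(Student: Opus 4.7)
The proof decomposes along the two branches of $\ttt_k$ in \eqref{eq:adaptive_stepsize_improved}. A direct computation shows that the boundary $(1+M\eta_k)\alpha_k = 1$ is equivalent to $t_k = t^u$, where $t_k = \eta_k/((1+M\eta_k)\lNorm{\vd_k}{\vx_k})$ is the step size of Algorithm~\ref{alg:ada_bfgs} and $t^u$ is defined in \eqref{eq:def_t_l_u}. Hence in the first branch $(1+M\eta_k)\alpha_k \leq 1$ the new step coincides with that of Algorithm~\ref{alg:ada_bfgs} and the bound \eqref{eq:self_con_f_upper_improved} reduces to \eqref{eq:self_con_f_upper} on the relevant interval, so all three conclusions follow verbatim from Lemma~\ref{lem:ada_bfgs_properties}. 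The real work therefore sits in the second branch $(1+M\eta_k)\alpha_k > 1$, where $\ttt_k > t^u$.

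For the decrease \eqref{eq:self_con_f_dec_improved}, I would compare the two pieces of \eqref{eq:self_con_f_upper_improved} as functions of $t$. Using $L\Norm{\vd_k}^2 = \lNorm{\vd_k}{\vx_k}^2/\alpha_k^2$, one checks that they agree in value, first derivative, and second derivative at $t = t^u$; for $t > t^u$ the first-branch second derivative $\lNorm{\vd_k}{\vx_k}^2/(1-Mt\lNorm{\vd_k}{\vx_k})^2$ strictly exceeds the constant second-branch second derivative $L\Norm{\vd_k}^2$. Integrating twice from $t^u$ then shows that the second branch is pointwise dominated by the first for $t > t^u$. Since the unconstrained minimizer of the first-branch bound is $t_k > t^u$ in this regime with optimal value $-\omega(M\eta_k)/M^2$ (by the original analysis), the second-branch minimizer $\ttt_k$ attains an even smaller value, which gives \eqref{eq:self_con_f_dec_improved}.

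For the Armijo inequality \eqref{eq:armijo_condition_improved}, I would substitute $\ttt_k$ into the second branch of \eqref{eq:self_con_f_upper_improved} and simplify with $Mt^u\lNorm{\vd_k}{\vx_k} = 1-\alpha_k$ and $L\Norm{\vd_k}^2 = \lNorm{\vd_k}{\vx_k}^2/\alpha_k^2$. The cross-terms cancel cleanly and the inequality reduces to
\begin{equation*}
    \alpha_k - 1 - \ln\alpha_k \leq \frac{(1+M\eta_k)(1-\alpha_k)^2}{2}.
\end{equation*}
Since the case-2 hypothesis gives $1 + M\eta_k > 1/\alpha_k$, it suffices to prove the stronger one-variable bound $\alpha - 1 - \ln\alpha \leq (1-\alpha)^2/(2\alpha)$ on $(0, 1]$, which follows from noting that $g(\alpha) = (1-\alpha)^2/(2\alpha) - \alpha + 1 + \ln\alpha$ satisfies $g(1) = 0$ and $g'(\alpha) = -(1/\alpha - 1)^2/2 \leq 0$.

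For the curvature inequality \eqref{eq:curvature_condition_improved}, the upper bound $\vg_{k+1}^{\T}\vd_k \leq 0$ follows because $\ttt_k$ is the stationary point of the upper bound $\phi$ in \eqref{eq:self_con_f_upper_improved}, while \eqref{eq:self_con_grad_upper_improved} says precisely that $\phi'(t) \geq \nabla f(\vx_k + t\vd_k)^{\T}\vd_k$. For the lower bound I would derive both candidate factors independently and combine via the minimum. Integrating the self-concordant Hessian bound $\vd_k^{\T}\nabla^2 f(\vx_k + s\vd_k)\vd_k \geq \lNorm{\vd_k}{\vx_k}^2/(1+Ms\lNorm{\vd_k}{\vx_k})^2$ over $[0, \ttt_k]$ yields $\vg_{k+1}^{\T}\vd_k - \vg_k^{\T}\vd_k \geq \ttt_k\lNorm{\vd_k}{\vx_k}^2/(1+M\ttt_k\lNorm{\vd_k}{\vx_k})$, and the factor $2M\eta_k/(1+2M\eta_k)$ then emerges from the algebraic identity
\begin{equation*}
    (M\ttt_k\lNorm{\vd_k}{\vx_k})(1+M\eta_k) - M\eta_k = (M\eta_k\alpha_k - (1-\alpha_k))^2 \geq 0,
\end{equation*}
which is an equality in case 1 and a completed square in case 2. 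Independently, integrating the strong-convexity bound $\vd_k^{\T}\nabla^2 f \vd_k \geq \mu\Norm{\vd_k}^2$ gives $\vg_{k+1}^{\T}\vd_k - \vg_k^{\T}\vd_k \geq \mu\ttt_k\Norm{\vd_k}^2$, and a short check shows $\ttt_k L\Norm{\vd_k}^2 \geq \eta_k\lNorm{\vd_k}{\vx_k}$ in both branches (using $(1+M\eta_k)\alpha_k^2 \leq 1$ in case 1 and discarding a non-negative summand in case 2), delivering the factor $1-1/\varkappa$. The main obstacle I anticipate is the bookkeeping of $\eta_k, \alpha_k, t^u, \ttt_k$ needed to verify the identity above cleanly and to reduce the Armijo condition to the single-variable inequality on $g$; both are concrete computations but algebraically dense.
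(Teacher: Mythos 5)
Your proposal is correct, but it takes a genuinely different route from the paper. The paper avoids case-by-case algebra entirely: it introduces four auxiliary functions $h_k$, $\thh_k$, $H_k$, $\tH_k$ (the self-concordant and smoothness-aided lower/upper models) and proves an intermediate comparison lemma ($\thh_k' \geq \max\{h_k',\ \vg_k^{\T}\vd_k + t\mu\Norm{\vd_k}^2\}$, $\tH_k' \leq \min\{H_k',\ \vg_k^{\T}\vd_k + tL\Norm{\vd_k}^2\}$, $\tH_k'(\ttt_k)=H_k'(t_k)=0$, $\ttt_k\geq t_k$), from which \eqref{eq:self_con_f_dec_improved} follows by comparing integrals of $\tH_k'$ and $H_k'$ up to $t_k$, \eqref{eq:curvature_condition_improved} by evaluating the two lower models at $\ttt_k$, and \eqref{eq:armijo_condition_improved} by convexity of $\tH_k'$ and a trapezoid bound — uniformly over both branches. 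You instead split on the step-size formula, reduce branch $(1+M\eta_k)\alpha_k\leq 1$ to Lemma~\ref{lem:ada_bfgs_properties}, and in the other branch work with the closed forms of $\ttt_k$, $t^u$, $\alpha_k$: your reduction of the Armijo condition to $\alpha-1-\ln\alpha\leq(1-\alpha)^2/(2\alpha)$ on $(0,1]$ and your identity $\tau(1+M\eta_k)-M\eta_k=(M\eta_k\alpha_k-(1-\alpha_k))^2$ with $\tau=M\ttt_k\lNorm{\vd_k}{\vx_k}$ both check out, as does the domination of the second model piece by the first for $t>t^u$ via matching derivatives at $t^u$. Your approach is more elementary and makes the geometric meaning of the case boundary ($t_k=t^u$) explicit; the paper's buys a shorter, branch-free argument and reusable monotonicity facts. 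One imprecision: in the first branch the three conclusions do \emph{not} all follow ``verbatim'' from Lemma~\ref{lem:ada_bfgs_properties}, because \eqref{eq:curvature_condition_improved} with the minimum is strictly \emph{stronger} than \eqref{eq:curvature_condition} (since $\vg_k^{\T}\vd_k<0$, the minimum of the two coefficients asserts \emph{both} lower bounds, including the $1-1/\varkappa$ one that Lemma~\ref{lem:ada_bfgs_properties} does not provide); this is harmless only because your later curvature paragraph establishes both factors in both branches.
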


\begin{proof}
    Please refer to Appendix~\ref{proof:ada_bfgs_improved_properties}.
\end{proof}

\begin{remark}
    Compared with Lemma~\ref{lem:ada_bfgs_properties} for adaptive BFGS~(Algorithm~\ref{alg:ada_bfgs}), results of Lemma~\ref{lem:ada_bfgs_improved_properties} indicate a tighter lower bound on $\vg_k^\T\vd_k$ in \eqref{eq:curvature_condition_improved} for {\SABFGS}~(Algorithm~\ref{alg:ada_bfgs_improved}).
\end{remark}

\begin{remark}   
Recall that our analysis in Sections~\ref{sec:linear_phase} and~\ref{sec:superlinear_phase} implies the global and local convergence rates of adaptive BFGS~(Algorithm~\ref{alg:ada_bfgs}), as shown in Theorems~\ref{thm:linear_phase_linear_rates} and~\ref{thm:superlinear_phase_superlinear_rates}, only require that we obtain \cref{eq:self_con_f_dec,eq:armijo_condition,eq:curvature_condition}.
Since the result of Lemma~\ref{lem:ada_bfgs_improved_properties} directly means \cref{eq:self_con_f_dec,eq:armijo_condition,eq:curvature_condition}, the global and local convergence rates claimed in Theorems~\ref{thm:linear_phase_linear_rates} and~\ref{thm:superlinear_phase_superlinear_rates} also hold for our {\SABFGS}~(Algorithm~\ref{alg:ada_bfgs_improved}).
Furthermore, because the bounds shown in Lemma~\ref{lem:ada_bfgs_improved_properties} are tighter than the results of \cref{eq:self_con_f_dec,eq:armijo_condition,eq:curvature_condition}, our subsequent analysis will show that {\SABFGS} is possible to enjoy a better convergence behavior than adaptive BFGS.  
\end{remark}

We then show that {\SABFGS}~(Algorithm~\ref{alg:ada_bfgs_improved}) can achieve a tighter upper bound than the counterpart in Proposition~\ref{prop:analy_framework}. 

\begin{prop}\label{prop:analy_framework_improved}
    Under Assumptions~\ref{asm:strongly_convex},~\ref{asm:grad_Lip}, and~\ref{asm:self_con}, Algorithm~\ref{alg:ada_bfgs_improved} holds
    \begin{equation}\label{eq:_analy_framework_improved}
        \frac{f(\vx_{k + 1}) - f(\vx_{*})}{f(\vx_{k}) - f(\vx_{*})}
         \leq 1 - \frac{\hq_{k} \cos^{2}(\htheta_{k})}{2 \min\{ \bigl(1 + \sqrt{2} M \sqrt{f(\vx_{k}) - f(\vx_{*})} \bigr)^{2}, \varkappa\} \hatm_{k}}
    \end{equation}
    for all $k \geq 0$, where $\hq_{k}$ follows the definitions in \eqref{def:q} and $\htheta_{k}$ and $\hatm_{k}$ follow the definitions in \cref{def:wgt_vec,eq:affine-inv,def:theta_m} with any $\mP \in \BS_{++}^{n}$. Furthermore, for all $k \geq 1$, we have
    \begin{equation}\label{eq:analy_framework_improved}
        \frac{f(\vx_{k}) - f(\vx_{*})}{f(\vx_{0}) - f(\vx_{*})} \leq \left( 1- \frac{1}{2} \left( \prod_{i = 0}^{k - 1} \frac{\hq_{i} \cos^{2}(\htheta_{i})}{\min\bigl\{\bigl(1 + \sqrt{2} M \sqrt{f(\vx_{i}) - f(\vx_{*})}\bigr)^{2}, \varkappa\bigr\} \hatm_{i}} \right)^{\frac{1}{k}} \right)^{k}.
    \end{equation}
\end{prop}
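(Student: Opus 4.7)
The plan is to mirror the proof of Proposition~\ref{prop:analy_framework}, while substituting the stronger Armijo/curvature bounds for {\SABFGS} provided by Lemma~\ref{lem:ada_bfgs_improved_properties}. First I would apply \eqref{eq:armijo_condition_improved} together with the affine-invariance identities in \eqref{eq:affine-inv} to obtain $f(\vx_k) - f(\vx_{k+1}) \geq -\tfrac12 \hvg_k^\T \hvs_k$, and then decompose $(-\hvg_k^\T \hvs_k)/\Norm{\hvg_k}^2 = (\cos^2(\htheta_k)/\hatm_k) \cdot (\hvy_k^\T \hvs_k / (-\hvg_k^\T \hvs_k))$ exactly as in the derivation of \eqref{eq:analy_framework_curvature}.

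The only new ingredient is that the strengthened curvature condition \eqref{eq:curvature_condition_improved} upgrades the lower bound on $\hvy_k^\T \hvs_k / (-\hvg_k^\T \hvs_k)$ from $1/(1+2M\eta_k)$ to $\max\{1/(1+2M\eta_k),\, 1/\varkappa\} = 1/\min\{1+2M\eta_k,\, \varkappa\}$; this is the algebraic consequence of subtracting $\vg_k^\T\vd_k$ from both sides of \eqref{eq:curvature_condition_improved} and dividing by $-\vg_k^\T\vd_k > 0$, which turns the outer $\min$ in \eqref{eq:curvature_condition_improved} into the reciprocal of a $\min$. Combined with the definition of $\hq_k$ in \eqref{def:q}, this yields
\[
\frac{f(\vx_{k+1}) - f(\vx_*)}{f(\vx_k) - f(\vx_*)} \;\leq\; 1 - \frac{\hq_k \cos^2(\htheta_k)}{2\min\{1+2M\eta_k,\,\varkappa\}\,\hatm_k}.
\]
To convert $1+2M\eta_k$ into the form appearing in \eqref{eq:_analy_framework_improved}, I would invoke the descent inequality \eqref{eq:self_con_f_dec_improved}, which gives $\omega(M\eta_k) \leq M^2 (f(\vx_k) - f(\vx_*))$, and then apply the inverse bound $\omega^{-1}(y) \leq y + \sqrt{2y}$ from Lemma~\ref{lem:omega_bounds}, exactly as in the passage leading to \eqref{eq:superlinear_phase_second_term}. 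This produces $1 + 2M\eta_k \leq \bigl(1 + \sqrt{2}M\sqrt{f(\vx_k) - f(\vx_*)}\bigr)^2$; substituting this looser upper bound inside the $\min$ enlarges the denominator and hence weakens the bound in the right direction, producing \eqref{eq:_analy_framework_improved}.

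For the telescoped form \eqref{eq:analy_framework_improved}, I proceed as in Proposition~\ref{prop:linear_phase_small_case}. Since \eqref{eq:self_con_f_dec_improved} guarantees strict monotone decrease of $\{f(\vx_k)\}_{k\geq 0}$ toward $f(\vx_*)$, each per-iteration ratio is strictly in $(0,1)$, so I may multiply \eqref{eq:_analy_framework_improved} over $i = 0,\dots,k-1$ and obtain a product of factors $1 - \tfrac12 u_i$ with $u_i\in(0,1)$; Lemma~\ref{lem:ineq_arith_geo} applied with $a=1/2$ then converts this product into the required geometric-mean form. I do not anticipate a substantive obstacle: essentially every piece is a direct re-reading of machinery already developed for adaptive BFGS. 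The only delicate bookkeeping is the reciprocal swap of $\max$ for $\min$ when translating \eqref{eq:curvature_condition_improved} into a denominator, and ensuring that the direction of inequality is preserved when the coarser upper bound on $1+2M\eta_k$ is pushed inside the $\min$.
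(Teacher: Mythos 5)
Your proposal is correct and follows essentially the same route as the paper's proof: the paper likewise reduces everything to the improved curvature bound $\hvy_k^\T\hvs_k/(-\hvg_k^\T\hvs_k) \geq 1/\min\{1+2M\eta_k,\varkappa\}$ from \eqref{eq:curvature_condition_improved}, upgrades $1+2M\eta_k$ to $\bigl(1+\sqrt{2}M\sqrt{f(\vx_k)-f(\vx_*)}\bigr)^2$ via \eqref{eq:self_con_f_dec_improved} and \eqref{eq:omega_inv_upper}, and then telescopes with Lemma~\ref{lem:ineq_arith_geo}. Your bookkeeping of the $\max$/$\min$ reciprocal swap and the direction of the inequality when enlarging the argument of the $\min$ is exactly right.
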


\begin{proof}
    The proof of \eqref{eq:_analy_framework_improved} is very similar to that of \eqref{eq:analy_framework}. 
    The only difference is the improved lower bound of $-\hvy_{k}^{\T} \hvs_{k}/\hvg_{k}^{\T} \hvs_{k}$.
    We first apply \eqref{eq:curvature_condition_improved} to obtain
    \begin{equation}\label{eq:_analy_framework_curvature_improved}
        \frac{\hvy_{k}^{\T} \hvs_{k}}{- \hvg_{k}^{\T} \hvs_{k}} 
        = \frac{(\hvg_{k + 1} - \hvg_{k})^{\T} \hvd_{k}}{- \hvg_{k}^{\T} \hvd_{k}} \geq \frac{1}{\min\{1 + 2 M \eta_{k}, \varkappa\}}.
    \end{equation}
    We then use \cref{eq:self_con_f_dec_improved,eq:omega_inv_upper} to bound $M \eta_{k}$ as
    \begin{equation*}
        M \eta_{k} \overset{\eqref{eq:self_con_f_dec_improved}}{\leq} \omega^{-1}(M^{2}(f(\vx_{k}) - f(\vx_{*}))) \overset{\eqref{eq:omega_inv_upper}}{\leq} M^{2} (f(\vx_{k}) - f(\vx_{*})) + \sqrt{2} M \sqrt{f(\vx_{k}) - f(\vx_{*})},
    \end{equation*}
    which implies
    \begin{equation*}
        1 + 2 M \eta_{k} \leq \bigl(1 + \sqrt{2} M \sqrt{f(\vx_{k}) - f(\vx_{*})} \bigr)^{2}.
    \end{equation*}
    Combining above result with \eqref{eq:_analy_framework_curvature_improved}, we have
    \begin{equation}\label{eq:analy_framework_curvature_improved}
        -\frac{\hvy_{k}^{\T} \hvs_{k}}{\hvg_{k}^{\T} \hvs_{k}} \geq \frac{1}{\min\bigl\{\bigl(1 + \sqrt{2} M \sqrt{f(\vx_{k}) - f(\vx_{*})}\bigr)^{2}, \varkappa\bigr\}}.
    \end{equation}
    Now we follow the proof of Proposition~\ref{prop:analy_framework} by replacing \eqref{eq:analy_framework_curvature} with \eqref{eq:analy_framework_curvature_improved}, which yields the desired result of \eqref{eq:_analy_framework_improved}.
    Finally, we apply Lemma~\ref{lem:ineq_arith_geo} to \eqref{eq:_analy_framework_improved}, which yields \eqref{eq:analy_framework_improved}. 
\end{proof}

Our subsequent analysis requires the lower bound for $\hq_{k}$ and the upper bound for $\Norm{\hvy_{k}}^{2} / (\hvs_{k}^{\T} \hvy_{k})$. 
Note that the proofs of Lemmas~\ref{lem:P_L_q_J_bounds} and~\ref{lem:P_star_q_J_bounds}~\cite[Lemmas~4 and~5]{jinNonasymptoticGlobalConvergence2025} only require the step size that ensures the function value decreases, and thus they still hold for Algorithm~\ref{alg:ada_bfgs_improved}. 
Therefore, we can combine Lemmas~\ref{lem:P_L_q_J_bounds} and~\ref{lem:P_star_q_J_bounds} with Proposition~\ref{prop:analy_framework_improved} to analyze the global and local convergence of {\SABFGS}~(Algorithm~\ref{alg:ada_bfgs}).

We show the first linear convergence phase for {\SABFGS}~(Algorithm~\ref{alg:ada_bfgs_improved}) as follows.

\begin{thm}\label{thm:linear_phase_linear_rates_1_improved}
    Under Assumptions~\ref{asm:strongly_convex},~\ref{asm:grad_Lip}, and~\ref{asm:self_con}, Algorithm~\ref{alg:ada_bfgs_improved} holds
    \begin{equation}\label{eq:linear_phase_linear_rates_1_improved}
        \frac{f(\vx_{k}) - f(\vx_{*})}{f(\vx_{0}) - f(\vx_{*})} \leq \Biggl( 1 - \frac{1}{\varkappa \min\{\Gamma, \varkappa\}} \exp \biggl( - \frac{\Psi(\bmB_{0})}{k} \biggr) \Biggr)^{k}
    \end{equation}
    for all $k \geq 1$, where $\Psi(\cdot)$ and $\bmB_{0}$ follow the definitions in \cref{def:potential_func,def:bar_B}, respectively, and $\Gamma$ is defined~as
    \begin{equation}\label{def:Gamma}
        \Gamma \defeq \bigl(1 + \sqrt{2} M \sqrt{\Delta}\bigr)^{2}.
    \end{equation}
    Furthermore, for all $k \geq 2 \Psi(\bmB_{0})$, we have
    \begin{equation}\label{eq:_linear_phase_linear_rates_1_improved}
        \frac{f(\vx_{k}) - f(\vx_{*})}{f(\vx_{0}) - f(\vx_{*})} \leq \left( 1 - \frac{1}{2 \varkappa \min\{\Gamma, \varkappa\}} \right)^{k}.
    \end{equation}
\end{thm}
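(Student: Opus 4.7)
The plan is to invoke Proposition~\ref{prop:analy_framework_improved} with the weight matrix $\mP = L\mI$, and combine it with (i) the uniform bounds of Lemma~\ref{lem:P_L_q_J_bounds} and (ii) the potential function telescoping \eqref{eq:acc_potential_func} of Proposition~\ref{prop:potential_func_upper}. As the paragraph between Proposition~\ref{prop:analy_framework_improved} and this theorem observes, Lemma~\ref{lem:P_L_q_J_bounds} applies verbatim to Algorithm~\ref{alg:ada_bfgs_improved} because its proof only uses the monotone decrease of the function value, which is ensured here by \eqref{eq:self_con_f_dec_improved}.

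First I would exploit this monotonicity to uniformize the denominator in \eqref{eq:analy_framework_improved}: since $f(\vx_i) - f(\vx_*) \leq \Delta$ for all $i \geq 0$, each factor
\[ \min\bigl\{\bigl(1 + \sqrt{2} M \sqrt{f(\vx_i) - f(\vx_*)}\bigr)^{2},\ \varkappa\bigr\} \leq \min\{\Gamma, \varkappa\}, \]
so it can be pulled out of the geometric mean as the constant $\min\{\Gamma,\varkappa\}$. Next, with $\mP = L\mI$, Lemma~\ref{lem:P_L_q_J_bounds} gives $\hq_i \geq 2/\varkappa$ and $\Norm{\hvy_i}^{2}/(\hvs_i^{\T} \hvy_i) \leq 1$; substituting the latter into \eqref{eq:acc_potential_func} yields $\sum_{i=0}^{k-1} \ln(\cos^{2}(\htheta_i)/\hatm_i) \geq -\Psi(\bmB_0)$. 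Plugging these three ingredients into \eqref{eq:analy_framework_improved} gives
\[ \left(\prod_{i=0}^{k-1} \frac{\hq_i \cos^{2}(\htheta_i)}{\min\{\Gamma, \varkappa\}\, \hatm_i}\right)^{1/k} \geq \frac{2}{\varkappa \min\{\Gamma, \varkappa\}} \exp\bigl(-\Psi(\bmB_0)/k\bigr), \]
and combining with the $(1 - \tfrac{1}{2}(\cdot)^{1/k})^{k}$ form of \eqref{eq:analy_framework_improved} produces \eqref{eq:linear_phase_linear_rates_1_improved}.

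For the enhanced bound \eqref{eq:_linear_phase_linear_rates_1_improved}, I would simply note that when $k \geq 2\Psi(\bmB_0)$, we have $\Psi(\bmB_0)/k \leq 1/2$, hence $\exp(-\Psi(\bmB_0)/k) \geq e^{-1/2} \geq 1/2$ (since $e \leq 4$); this weakens the contraction base from $1 - \exp(-\Psi(\bmB_0)/k)/(\varkappa \min\{\Gamma,\varkappa\})$ to $1 - 1/(2\varkappa \min\{\Gamma,\varkappa\})$. There is no real obstacle in the argument; the only points requiring care are verifying the applicability of Lemma~\ref{lem:P_L_q_J_bounds} to Algorithm~\ref{alg:ada_bfgs_improved} and recognizing that the monotone decrease of $\{f(\vx_k)\}$ is exactly what lets the $k$-dependent denominator in \eqref{eq:analy_framework_improved} be replaced by the constant $\min\{\Gamma,\varkappa\}$, which in turn is what distinguishes this first-phase rate from the corresponding bound for Algorithm~\ref{alg:ada_bfgs}.
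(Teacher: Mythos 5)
Your proposal is correct and follows essentially the same route as the paper's proof: invoking Proposition~\ref{prop:analy_framework_improved} with $\mP = L\mI$, using Lemma~\ref{lem:P_L_q_J_bounds} and \eqref{eq:acc_potential_func} for the bounds $\hq_i \geq 2/\varkappa$ and $\sum_i \ln(\cos^2(\htheta_i)/\hatm_i) \geq -\Psi(\bmB_0)$, and replacing the $i$-dependent denominator by $\min\{\Gamma,\varkappa\}$ via the monotone decrease of the function values. The final step $\exp(-1/2) \geq 1/2$ for $k \geq 2\Psi(\bmB_0)$ is also exactly the paper's argument.
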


\begin{proof}
    We follow the definitions of $\hvy_{k}$, $\hvs_{k}$, and $\hq_{k}$ in \cref{def:wgt_vec,def:q} with $\mP = L \mI$, then Lemma~\ref{lem:P_L_q_J_bounds} implies $\hq_{k} \geq 2 / \varkappa$ and $1 - \Norm{\hvy_{k}}^{2} / ({\hvs_{k}^{\T}\hvy_{k}}) \geq 0$. 
    Therefore, \eqref{eq:acc_potential_func} in Proposition~\ref{prop:potential_func_upper} implies
    \begin{equation*}
        \sum_{i = 0}^{k - 1} \ln \frac{\cos^{2} (\htheta_{i})}{\hatm_{i}} \geq - \Psi(\bmB_{0}) + \sum_{i = 0}^{k - 1}\left( 1 - \frac{\Norm{\hvy_{k}}^{2}}{\hvs_{k}^{\T}\hvy_{k}} \right) \geq - \Psi(\bmB_{0}).
    \end{equation*}
    Note that Lemma~\ref{lem:ada_bfgs_improved_properties} implies $\{f(\vx_{k})\}_{k \geq 0}$ is strictly decreasing, then we have
    \begin{equation*}
        \bigl(1 + \sqrt{2} M \sqrt{f(\vx_{k}) - f(\vx_{*})}\bigr)^{2} 
        \leq \bigl(1 + \sqrt{2} M \sqrt{f(\vx_{0}) - f(\vx_{*})}\bigr)^{2} 
        = (1 + \sqrt{2} M \sqrt{\Delta})^{2} = \Gamma.
    \end{equation*}
    Substituting above results into \eqref{eq:analy_framework_improved} of Proposition~\ref{prop:analy_framework_improved} yields
    \begin{equation*}
    \begin{aligned}
        \frac{f(\vx_{k}) - f(\vx_{*})}{f(\vx_{0}) - f(\vx_{*})} & \leq \left( 1 - \frac{1}{\varkappa \min\{ \Gamma, \varkappa \}} \left( \prod_{i = 0}^{k - 1} \frac{\cos^{2}(\htheta_{i})}{\hatm_{i}} \right)^{\frac{1}{k}} \right)^{k} \\
        & \leq \biggl( 1 - \frac{1}{\varkappa \min\{\Gamma, \varkappa\}} \exp \biggl( - \frac{\Psi(\bmB_{0})}{k} \biggr) \biggr)^{k}.
    \end{aligned}
    \end{equation*}
    Thus, we have proved \eqref{eq:linear_phase_linear_rates_1_improved}. 
    Furthermore, in the case of $k \geq 2 \Psi(\bmB_{0})$, we have
    \begin{align*}
    \exp \left(-\frac{\Psi(\bmB_{0})}{k}\right) \geq \exp \left(-\frac{1}{2}\right) \geq \frac{1}{2}.
    \end{align*}
    Hence, \eqref{eq:_linear_phase_linear_rates_1_improved} follows immediately.
\end{proof}

We then show the second linear convergence phase for {\SABFGS}~(Algorithm~\ref{alg:ada_bfgs_improved}), where we achieve a convergence rate of $\fO((1-1/(2\varkappa))^{k})$, which is faster than the rate of $\fO((1-1/(2\kappa\min\{\Gamma,\varkappa\}))^{k})$ in the first phase (Theorem~\ref{thm:linear_phase_linear_rates_1_improved}).

\begin{thm}\label{thm:linear_phase_linear_rates_2_improved}
    Under Assumptions~\ref{asm:strongly_convex},~\ref{asm:grad_Lip}, and~\ref{asm:self_con}, Algorithm~\ref{alg:ada_bfgs_improved} holds
    \begin{equation}\label{eq:linear_phase_linear_rates_2_improved}
        \frac{f(\vx_{k}) - f(\vx_{*})}{f(\vx_{0}) - f(\vx_{*})} \leq \Biggl( 1 - \frac{1}{\varkappa} \exp \Biggl( - \frac{(\Psi(\bmB_{0})  + 1)(2 \ln \min\{ \Gamma, \varkappa\} + 1) + \varkappa \min \{\Gamma, \varkappa\} \bigl( (\ln \Gamma)^{2} + 24 \bigr) }{k} \Biggr) \Biggr)^{k}
    \end{equation}
    for all $k \geq 1$, where $\Psi(\cdot)$, $\bmB_{0}$, and $\Gamma$ follow the definitions in \cref{def:potential_func,def:bar_B,def:Gamma}, respectively. Furthermore, for all $k \geq 2 (\Psi(\bmB_{0}) + 1) (2 \ln \min\{\Gamma, \varkappa\} + 1) + 2 \varkappa \min\{\Gamma, \varkappa\} ((\ln \Gamma)^{2} + 24)$, we have
    \begin{equation}\label{eq:linear_phase_linear_rates_2_improved_2}
        \frac{f(\vx_{k}) - f(\vx_{*})}{f(\vx_{0}) - f(\vx_{*})} \leq \left( 1 - \frac{1}{2 \varkappa} \right)^{k}.
    \end{equation}
\end{thm}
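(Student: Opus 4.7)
The plan follows the same blueprint as Proposition~\ref{prop:linear_phase_linear_rates1} and Theorem~\ref{thm:linear_phase_linear_rates_1_improved}, but now uses the full strength of the improved per-iteration bound~\eqref{eq:_analy_framework_improved}. First, setting $\mP = L\mI$ and invoking Lemma~\ref{lem:P_L_q_J_bounds} (which gives $\hq_i \geq 2/\varkappa$ and $\Norm{\hvy_i}^2/(\hvs_i^{\T}\hvy_i) \leq 1$), Proposition~\ref{prop:analy_framework_improved} together with Lemma~\ref{lem:ineq_arith_geo} reduces the task to lower-bounding the geometric mean of $\cos^2(\htheta_i)/\bigl(\min\{\phi_i,\varkappa\}\hatm_i\bigr)$, where
\begin{equation*}
\phi_i \defeq \bigl(1 + \sqrt{2}\,M\sqrt{f(\vx_i) - f(\vx_*)}\bigr)^2.
\end{equation*}
Combining Proposition~\ref{prop:potential_func_upper} with $\Norm{\hvy_i}^2 / (\hvs_i^{\T} \hvy_i) \leq 1$ yields $\sum_{i=0}^{k-1}\ln(\cos^2(\htheta_i)/\hatm_i) \geq -\Psi(\bmB_0)$, so everything reduces to proving a sharp upper bound on $\sum_{i=0}^{k-1}\ln\min\{\phi_i,\varkappa\}$.

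The crux of the proof, and the main obstacle, is producing the $\varkappa\min\{\Gamma,\varkappa\}(\ln\Gamma)^2$ dependence in this sum rather than a $\sqrt{\Gamma}\cdot k$-type term that would result from naively applying $\ln\phi_i \leq 2\sqrt{2}M\sqrt{f(\vx_i)-f(\vx_*)}$ to every index. The idea is to feed back the first-phase linear rate already established in Theorem~\ref{thm:linear_phase_linear_rates_1_improved}: for $i \geq 2\Psi(\bmB_0)$, $\sqrt{f(\vx_i)-f(\vx_*)} \leq \sqrt{\Delta}\,\rho^{i/2}$ with $\rho = 1 - 1/(2\varkappa\min\{\Gamma,\varkappa\})$. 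Combined with the pointwise estimate
\begin{equation*}
\ln\min\{\phi_i,\varkappa\} \leq \min\bigl\{2\sqrt{2}\,M\sqrt{f(\vx_i)-f(\vx_*)},\ \ln\min\{\Gamma,\varkappa\}\bigr\},
\end{equation*}
this lets me split the iterations at a threshold $N$ chosen so that both arguments of the inner $\min$ coincide, concretely $N$ of order $2\Psi(\bmB_0) + 2\log_{1/\rho}\bigl(2\sqrt{2}M\sqrt{\Delta}/\ln\min\{\Gamma,\varkappa\}\bigr)$. For $i < N$ I bound each term by $\ln\min\{\Gamma,\varkappa\}$, while for $i \geq N$ the linearization and the geometric tail $\sum_{i \geq N}\rho^{i/2} \leq 1/(1-\sqrt{\rho}) \leq 4\varkappa\min\{\Gamma,\varkappa\}$ contribute only $\fO(\varkappa\min\{\Gamma,\varkappa\}\ln\min\{\Gamma,\varkappa\})$.

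The routine estimates $\log_{1/\rho}(u) \leq 2\varkappa\min\{\Gamma,\varkappa\}\ln u$ and $\ln(2\sqrt{2}M\sqrt{\Delta}) \leq \tfrac{1}{2}\ln\Gamma + \ln 2$ then give $N = \fO(\Psi(\bmB_0) + \varkappa\min\{\Gamma,\varkappa\}\ln\Gamma)$, so the dominating contribution $N\cdot\ln\min\{\Gamma,\varkappa\}$ produces a term of order $\varkappa\min\{\Gamma,\varkappa\}(\ln\Gamma)^2$ after bounding $\ln\min\{\Gamma,\varkappa\}\leq\ln\Gamma$. Carefully tracking numerical constants, and handling the degenerate case $2\sqrt{2}M\sqrt{\Delta}\leq\ln\min\{\Gamma,\varkappa\}$ (where $N$ collapses to $\lceil 2\Psi(\bmB_0)\rceil$), yields the exact expression $(\Psi(\bmB_0)+1)(2\ln\min\{\Gamma,\varkappa\}+1)+\varkappa\min\{\Gamma,\varkappa\}((\ln\Gamma)^2+24)$ inside the exponent of~\eqref{eq:linear_phase_linear_rates_2_improved}. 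The refined linear rate~\eqref{eq:linear_phase_linear_rates_2_improved_2} then follows from the standard observation that when $k$ exceeds twice this quantity, the exponent is at most $-1/2$, so $\exp(-\cdot) \geq 1/\sqrt{e} \geq 1/2$ and therefore $1 - \tfrac{1}{\varkappa}\exp(-\cdot) \leq 1 - 1/(2\varkappa)$; raising to the $k$-th power gives the claim.
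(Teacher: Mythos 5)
Your proposal follows the paper's skeleton up to the decisive step: same reduction via Proposition~\ref{prop:analy_framework_improved}, Lemma~\ref{lem:P_L_q_J_bounds}, and Proposition~\ref{prop:potential_func_upper} to bounding $\sum_{i=0}^{k-1}\ln\min\{\phi_i,\varkappa\}$, the same split of the head at $\lceil 2\Psi(\bmB_0)\rceil$ (giving the $(\Psi(\bmB_0)+1)\ln\min\{\Gamma,\varkappa\}$ contribution), and the same feedback of the Phase~I linear rate from Theorem~\ref{thm:linear_phase_linear_rates_1_improved}. Where you diverge is in controlling the tail: the paper bounds $S_2 \leq 2\sum_{i\geq 1}\ln\bigl(1+\sqrt{2}M\sqrt{\Delta}\,\tgamma^{i/2}\bigr)$ by an explicit integral, substitutes $y=\sqrt{2}M\sqrt{\Delta}\,\tgamma^{u/2}$, splits $\ln(1+y)/y$ into $\ln(1+y)/(1+y)$ (which integrates exactly to $\tfrac12(\ln(1+\sqrt{2}M\sqrt{\Delta}))^2 = \tfrac18(\ln\Gamma)^2$) plus a residual whose integral $E<3$ is bounded in Appendix~\ref{proof:constant_E}; you instead introduce a second threshold $N$ where the linearized bound $2\sqrt{2}M\sqrt{f(\vx_i)-f(\vx_*)}$ crosses $\ln\min\{\Gamma,\varkappa\}$, bound the head crudely by $N\cdot\ln\min\{\Gamma,\varkappa\}$, and the tail by the geometric sum. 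Both routes are sound and yield the same order $\tilde{\fO}(\Psi(\bmB_0)+\varkappa\min\{\Gamma,\varkappa\})$, but your final claim that careful constant-tracking "yields the exact expression" is not substantiated: with $N-2\Psi(\bmB_0)\approx 4\varkappa\min\{\Gamma,\varkappa\}\ln\bigl(2\sqrt{2}M\sqrt{\Delta}/\ln\min\{\Gamma,\varkappa\}\bigr)$ and each head term bounded by its maximum $\ln\min\{\Gamma,\varkappa\}\leq\ln\Gamma$, the leading coefficient comes out near $2\varkappa\min\{\Gamma,\varkappa\}(\ln\Gamma)^2$ rather than $\varkappa\min\{\Gamma,\varkappa\}(\ln\Gamma)^2$ --- the integral evaluation is precisely what exploits the decay of $\ln(1+\sqrt{2}M\sqrt{\Delta}\,\tgamma^{i/2})$ across the head and lands the sharper constant $(\ln\Gamma)^2+24$. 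So your argument proves a theorem of identical form with a slightly larger absolute constant in the exponent; to match the stated constants you would need to replace the two-threshold split by the exact tail summation.
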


\begin{proof}
    Following the proof of Theorem~\ref{thm:linear_phase_linear_rates_1_improved}, we have
    \begin{align*}
     \hq_{k} \geq \frac{2}{\varkappa} \quad\text{and}\quad
     \sum_{i = 0}^{k - 1} \ln \frac{\cos^{2}(\htheta_{i})}{\hatm_{i}} \geq - \Psi(\bmB_{0}).
    \end{align*}
    From \eqref{eq:analy_framework_improved}, it follows that
    \begin{equation}\label{eq:_linear_phase_2_improved}
    \begin{aligned}
        \frac{f(\vx_{k}) - f(\vx_{*})}{f(\vx_{0}) - f(\vx_{*})} & \leq \left( 1 - \frac{1}{\varkappa} \left( \prod_{i = 0}^{k - 1} \frac{1}{\min\bigl\{\bigl(1 + \sqrt{2} M \sqrt{f(\vx_{k}) - f(\vx_{*})}\bigr)^{2}, \varkappa\bigr\}} \right)^{\frac{1}{k}} \exp \left( - \frac{\Psi(\bmB_{0})}{k}\right) \right)^{k} \\
        & = \left( 1 - \frac{1}{\varkappa} \exp \left( - \frac{\Psi(\bmB_{0}) + \sum_{i = 0}^{k - 1} \ln \min\bigl\{\bigl(1 + \sqrt{2} M \sqrt{f(\vx_{k}) - f(\vx_{*})}\bigr)^{2}, \varkappa\bigr\}}{k} \right) \right)^{k}.
    \end{aligned}
    \end{equation}
    Thus, it remains to provide an upper bound for 
    \begin{align}\label{eq:_linear_phase_2_improved_terms}
    \begin{split}
    & \sum_{i = 0}^{k - 1} \ln \min\big\{\bigl(1 + \sqrt{2} M \sqrt{f(\vx_{i}) - f(\vx_{*})}\bigr)^{2}, \varkappa\big\} \\
    = & \underbrace{\sum_{i = 0}^{\ceil*{2 \Psi(\bmB_{0})}} \ln \min\big\{\bigl(1 + \sqrt{2} M \sqrt{f(\vx_{i}) - f(\vx_{*})}\bigr)^{2}, \varkappa\big\}}_{S_1} \\
    & + \underbrace{\sum_{i = \ceil*{2 \Psi(\bmB_{0})} + 1}^{k - 1} \ln \min\big\{\bigl(1 + \sqrt{2} M \sqrt{f(\vx_{i}) - f(\vx_{*})}\bigr)^{2}, \varkappa\big\}}_{S_2}.
    \end{split}
    \end{align}
    For the term $S_{1}$, the strict monotonic decrease of $\{f(\vx_{k})\}_{k \geq 0}$ given by Lemma~\ref{lem:ada_bfgs_improved_properties} yields
    \begin{equation}\label{eq:_linear_phase_2_improved_terms_1}
    \begin{aligned}
        S_{1} \leq \left( \ceil*{2 \Psi(\bmB_{0})} + 1 \right) \ln \min\{(1 + \sqrt{2} M \sqrt{f(\vx_0) - f(\vx_{*})})^{2}, \varkappa\} \leq 2\left( \Psi(\bmB_{0}) + 1 \right) \ln \min\{\Gamma, \varkappa\}.
    \end{aligned}
    \end{equation}
    For the term $S_2$, we use the global linear convergence rate in Theorem~\ref{thm:linear_phase_linear_rates_1_improved} to achieve
    \begin{equation}\label{eq:_linear_phase_2_improved_terms_2_origin}
    \begin{aligned}
        S_{2} & ~=~ \sum_{i = \ceil*{2 \Psi(\bmB_{0})} + 1}^{k - 1} \ln \min\bigl\{\bigl(1 + \sqrt{2} M \sqrt{f(\vx_{i}) - f(\vx_{*})}\bigr)^{2},\varkappa\bigr\} \\
        & ~\leq~ 2 \sum_{i = \ceil*{2 \Psi(\bmB_{0})} + 1}^{k - 1} \ln \bigl(1 + \sqrt{2} M \sqrt{f(\vx_{i}) - f(\vx_{*})}\bigr) \\
        & ~\overset{\mathclap{\eqref{eq:_linear_phase_linear_rates_1_improved}}}{\leq}~ 2 \sum_{i = 1}^{+\infty} \ln \bigl(1 + \sqrt{2} M \sqrt{\Delta} \tgamma^{\frac{i}{2}}\bigr) \\
        & ~\leq~ 2 \int_{0}^{+\infty} \ln \bigl(1 + \sqrt{2} M \sqrt{\Delta} \tgamma^{\frac{u}{2}}\bigr) \, \mathrm{d} u,
    \end{aligned}
    \end{equation}
    where $\tgamma \defeq 1 - 1 / (2 \varkappa \min\{\Gamma, \varkappa\})$. 
    Here the last inequality in above derivation is due to the function $\ln (1 + \sqrt{2} M \sqrt{\Delta} \tgamma^{\frac{u}{2}})$ is decreasing with respect to $u$.
    
    The integral term in above result can be bounded as 
    \begin{equation}\label{eq:_linear_phase_2_improved_terms_2_integral}
    \begin{aligned}
            & \int_{0}^{+\infty} \ln \bigl(1 + \sqrt{2} M \sqrt{\Delta} \tgamma^{\frac{u}{2}}\bigr) \, \mathrm{d} u \\
        = & \frac{2}{\ln \tgamma} \int_{0}^{+\infty} \frac{\ln \bigl(1 + \sqrt{2} M \sqrt{\Delta} \tgamma^{\frac{u}{2}}\bigr)}{\sqrt{2} M \sqrt{\Delta} \tgamma^{\frac{u}{2}}} \, \mathrm{d} \bigl(\sqrt{2} M \sqrt{\Delta} \tgamma^{\frac{u}{2}}\bigr) \\
        = & - \frac{2}{\ln \tgamma} \int_{0}^{\sqrt{2} M \sqrt{\Delta}} \frac{\ln (1 + y)}{y} \, \mathrm{d} y \\
        = & - \frac{2}{\ln \tgamma} \int_{0}^{\sqrt{2} M \sqrt{\Delta}} \frac{\ln (1 + y)}{1 + y} \, \mathrm{d} y - \frac{2}{\ln \tgamma} \int_{0}^{\sqrt{2} M \sqrt{\Delta}} \frac{\ln (1 + y)}{y(1 + y)} \, \mathrm{d} y \\
        = & - \frac{1}{\ln \tgamma} \left( \ln (1 + \sqrt{2} M \sqrt{\Delta}) \right)^{2} - \frac{2}{\ln \tgamma} \int_{0}^{\sqrt{2} M \sqrt{\Delta}} \frac{\ln (1 + y)}{y(1 + y)} \, \mathrm{d} y \\
        \leq & - \frac{1}{4 \ln \tgamma} (\ln \Gamma)^{2} - \frac{2}{\ln \tgamma} \int_{0}^{+\infty} \frac{\ln (1 + y)}{y(1 + y)} \, \mathrm{d} y.
    \end{aligned}
    \end{equation}
    Since it holds
    $\frac{\ln(1 + y)}{y(1 + y)} \to 1$ as $y \to 0$ and $\frac{ y^{\alpha}\ln (1 + y)}{y(1 + y)} \to 0$ as $y \to +\infty$ for any $\alpha\in(1,2)$, we know that $\frac{\ln (1 + y)}{y(1 + y)}$ is integrable over $[0, +\infty]$. We then denote
    \begin{equation}\label{def:E}
        E = \int_{0}^{+\infty} \frac{\ln (1 + y)}{y(1 + y)} \,\mathrm{d} y,
    \end{equation}
    which is smaller than $3$~(see Appendix~\ref{proof:constant_E}). Moreover, since
    \begin{equation*}
        - \frac{1}{\ln \tgamma} = - \frac{1}{\ln \left(1 - 1 / (2 \varkappa \min\{\Gamma, \varkappa\}) \right)} \leq 2 \varkappa \min\{\Gamma, \varkappa \},
    \end{equation*}
    where we use the fact that $\ln (1 - u) \geq - u$ for all $u \in [0, 1)$. 
    We further combine \cref{eq:_linear_phase_2_improved_terms_2_origin,eq:_linear_phase_2_improved_terms_2_integral} with the facts $E < 3$ to obtain an upper bound for $S_{2}$ as 
    \begin{equation}\label{eq:_linear_phase_2_improved_terms_2}
        S_{2} \leq \varkappa \min\{\Gamma, \varkappa\} \bigl((\ln \Gamma)^{2} + 24\bigr).
    \end{equation}
    Combining \cref{eq:_linear_phase_2_improved_terms,eq:_linear_phase_2_improved_terms_1,eq:_linear_phase_2_improved_terms_2}, we obtain
    \begin{equation*}
    \begin{aligned}
             \sum_{i = 0}^{k - 1} \ln \min\bigl\{\bigl(1 + \sqrt{2} M \sqrt{f(\vx_{k}) - f(\vx_{*})}\bigr)^{2}, \varkappa\bigr\} 
        \leq& 2 (\Psi(\bmB_{0}) + 1) \ln \min\{\Gamma, \varkappa\} + \varkappa \min\{\Gamma, \varkappa\} \bigl( (\ln \Gamma)^{2} + 24\bigr).
    \end{aligned}
    \end{equation*}
    Substituting above result into \eqref{eq:_linear_phase_2_improved}, 
    we establish \eqref{eq:linear_phase_linear_rates_2_improved}.

    When $k \geq 2 (\Psi(\bmB_{0}) + 1)(2 \ln \min\{ \Gamma, \varkappa\} + 1) + 2 \varkappa \min \{ \Gamma, \varkappa\}((\ln \Gamma)^{2} + 24)$, we have
    \begin{equation*}
        \frac{f(\vx_{k}) - f(\vx_{*})}{f(\vx_{0}) - f(\vx_{*})} \overset{\eqref{eq:linear_phase_linear_rates_2_improved}}\leq \left( 1 - \frac{1}{\varkappa} \exp \left( -\frac{1}{2} \right) \right)^{k} \leq \left( 1 - \frac{1}{2 \varkappa} \right)^{k},
    \end{equation*}
    which completes the proof.
\end{proof}

\begin{remark}
    Recall that we have define
    $\Gamma = (1 + \sqrt{2} M \sqrt{\Delta})^{2} = \fO(M^{2} \Delta)$,
    then the starting moment of convergence rate $\fO((1 - 1 / \varkappa)^{k})$ achieved by Theorem~\ref{thm:linear_phase_linear_rates_2_improved} is 
    \begin{equation*}
    \begin{aligned}
        & 2 (\Psi(\bmB_{0}) + 1)(2 \ln \min\{\Gamma, \varkappa\} + 1) + 2 \varkappa \min\{\Gamma, \varkappa\} \bigl((\ln \Gamma)^{2} + 24\bigr) \\
        = & \fO\Bigl(\Psi(\bmB_{0}) \ln \min\{ M^{2} \Delta, \varkappa\} + \varkappa \min\{M^{2} \Delta, \varkappa\} \bigl(\ln M \sqrt{\Delta}\bigr)^{2} \Bigr) \\
        = & \tilde{\fO}\Bigl(\Psi(\bmB_{0}) + \min\bigl\{\varkappa M^{2} \Delta, \varkappa^{2}\bigr\} \Bigr).
    \end{aligned}
    \end{equation*}
    We point out that when $M \sqrt{\Delta} \geq \tilde{\Omega}(\max\{\varkappa, \Psi(\bmB_{0}) / \varkappa\})$, Theorem~\ref{thm:linear_phase_linear_rates_2_improved} implies our {\SABFGS}~(Algorithm~\ref{alg:ada_bfgs_improved}) requires fewer iterations than adaptive BFGS~(Algorithm~\ref{alg:ada_bfgs} with Theorem~\ref{thm:linear_phase_linear_rates}) to achieve the phase of global linear convergence rate $\fO((1 - 1 / \varkappa)^{k})$.
\end{remark}

Finally, we present the specific superlinear convergence rate for Algorithm~\ref{alg:ada_bfgs_improved}. 

\begin{thm}\label{thm:superlinear_phase_superlinear_rates_improved}
    Under Assumptions~\ref{asm:strongly_convex},~\ref{asm:grad_Lip},~\ref{asm:self_con}, and~\ref{asm:Hessian_Lip}, Algorithm~\ref{alg:ada_bfgs_improved} holds
    \begin{equation}\label{eq:superlinear_phase_improved_rate}
        \frac{f(\vx_{k}) - f(\vx_{*})}{f(\vx_{0}) - f(\vx_{*})} \leq \Biggl( \frac{\Psi(\tmB_{0}) + 8 \sqrt{2} \tM \sqrt{\Delta} (2 \Psi(\bmB_{0}) + 1 + 4 \varkappa \min\{\Gamma, \varkappa\})}{k} \Biggr)^{k}
    \end{equation}
    for all $k \geq 1$, where $\Psi(\cdot)$, $\bmB_{0}$, $\tmB_{0}$, and $\Gamma$ follow the definitions in \cref{def:potential_func,def:bar_B,def:tilde_B,def:Gamma}, respectively.
\end{thm}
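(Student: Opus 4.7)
The plan is to mirror the structure of the proof of Theorem~\ref{thm:superlinear_phase_superlinear_rates} while upgrading two ingredients: the per-iteration contraction from Proposition~\ref{prop:analy_framework} is replaced by the sharper Proposition~\ref{prop:analy_framework_improved}, and the first-phase linear rate from Theorem~\ref{thm:linear_phase_linear_rates} is replaced by the improved one in Theorem~\ref{thm:linear_phase_linear_rates_1_improved}. Concretely, I would set $\mP = \nabla^{2} f(\vx_{*})$ so that Lemma~\ref{lem:P_star_q_J_bounds} supplies $\hq_{k} \geq 2/(1+C_{k})^{2}$ and $\Norm{\hvy_{k}}^{2}/(\hvs_{k}^{\T}\hvy_{k}) \leq 1 + C_{k}$, with $C_{k} = 2\sqrt{2}\tM\sqrt{f(\vx_{k})-f(\vx_{*})}$. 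Substituting the lower bound on $\hq_{k}$ into~\eqref{eq:_analy_framework_improved} and multiplying the resulting inequality over $i = 0, 1, \dots, k-1$, Lemma~\ref{lem:ineq_arith_geo} produces an AM--GM style upper bound whose logarithm decomposes into three sums.

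I would then control each sum in turn. Proposition~\ref{prop:potential_func_upper} combined with $\Norm{\hvy_{i}}^{2}/(\hvs_{i}^{\T}\hvy_{i}) \leq 1 + C_{i}$ gives $\sum_{i=0}^{k-1}\ln(\cos^{2}(\htheta_{i})/\hatm_{i}) \geq -\Psi(\tmB_{0}) - \sum_{i=0}^{k-1}C_{i}$. Applying $1 - \exp(-u) \leq u$ to the outer $(1-\cdot)^{k}$ factor, and then $\ln(1+u) \leq u$ both to the terms $2\ln(1+C_{i})$ and to $\ln\min\{(1+\sqrt{2}M\sqrt{f(\vx_{i})-f(\vx_{*})})^{2},\varkappa\} \leq 2\sqrt{2}M\sqrt{f(\vx_{i})-f(\vx_{*})}$, all three correction terms fuse---using $C_{i} = 2\sqrt{2}\tM\sqrt{f(\vx_{i})-f(\vx_{*})}$ and $M \leq \tM$---into a single coefficient, yielding
\begin{equation*}
\frac{f(\vx_{k})-f(\vx_{*})}{\Delta} \leq \Biggl(\frac{\Psi(\tmB_{0}) + 8\sqrt{2}\,\tM \sum_{i=0}^{k-1}\sqrt{f(\vx_{i})-f(\vx_{*})}}{k}\Biggr)^{k}.
\end{equation*}

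It remains to bound $\sum_{i=0}^{k-1}\sqrt{f(\vx_{i})-f(\vx_{*})}$, which I would split at index $\ceil{2\Psi(\bmB_{0})}$. For the head, the monotone decrease of $\{f(\vx_{k})\}$ granted by Lemma~\ref{lem:ada_bfgs_improved_properties} bounds each of at most $2\Psi(\bmB_{0})+1$ terms by $\sqrt{\Delta}$. For the tail, Theorem~\ref{thm:linear_phase_linear_rates_1_improved} gives $\sqrt{f(\vx_{i})-f(\vx_{*})} \leq \sqrt{\Delta}(1-1/(2\varkappa\min\{\Gamma,\varkappa\}))^{i/2}$, whose geometric sum is at most $4\varkappa\min\{\Gamma,\varkappa\}\sqrt{\Delta}$ via the elementary inequality $1/(1-\sqrt{1-u}) \leq 2/u$. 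Adding both contributions yields precisely $(2\Psi(\bmB_{0}) + 1 + 4\varkappa\min\{\Gamma,\varkappa\})\sqrt{\Delta}$, matching the target constant. The main obstacle is the careful bookkeeping that collapses three distinct log-type correction terms into the single coefficient $8\sqrt{2}\,\tM$; using Theorem~\ref{thm:linear_phase_linear_rates_1_improved} (which kicks in already at $k \geq 2\Psi(\bmB_{0})$) rather than the sharper-but-later Theorem~\ref{thm:linear_phase_linear_rates_2_improved} is the key design choice that produces the stated $4\varkappa\min\{\Gamma,\varkappa\}\sqrt{\Delta}$ summand instead of a more intricate expression involving $(\ln\Gamma)^{2}$.
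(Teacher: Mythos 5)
Your proposal is correct and follows essentially the same route as the paper's proof: the same choice $\mP = \nabla^{2} f(\vx_{*})$ with Lemma~\ref{lem:P_star_q_J_bounds}, the same fusion of the three log-type correction terms into the coefficient $8\sqrt{2}\,\tM$ via $1-\exp(-u)\leq u$, $\ln(1+u)\leq u$, and $M\leq\tM$, and the same split of $\sum_{i}\sqrt{f(\vx_{i})-f(\vx_{*})}$ at $\ceil{2\Psi(\bmB_{0})}$ with the tail controlled by Theorem~\ref{thm:linear_phase_linear_rates_1_improved}. No gaps.
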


\begin{proof}
    We follow definitions of $\hvy_{k}$, $\hvs_{k}$, $\hq_{k}$ and $C_{k}$ in \cref{def:wgt_vec,def:q,def:C_k} with $\mP = \nabla^{2} f(\vx_{*})$, then Lemma~\ref{lem:P_L_q_J_bounds} implies that 
    \begin{align}\label{eq:_suplinear_phase_improved_q_lower}
        \hq_{k} \geq 2 / (1 + C_{k})^{2} \quad\text{and}\quad 1 - \Norm{\hvy_{k}}^{2} / ({\hvs_{k}^{\T}\hvy_{k}}) \geq - C_{k}.
    \end{align}
    Therefore, \eqref{eq:acc_potential_func} in Proposition~\ref{prop:potential_func_upper} implies
    \begin{equation}\label{eq:_superlinear_acc_potential}
        \sum_{i = 0}^{k - 1} \ln \frac{\cos^{2}(\htheta_{i})}{\hatm_i} \geq - \Psi(\tmB_{0}) - \sum_{i = 0}^{k - 1} C_i.
    \end{equation}
    Substituting the above results into \eqref{eq:analy_framework_improved}, we obtain
    \begin{equation}\label{eq:superlinear_phase_improved_terms}
    \begin{aligned}
        & \frac{f(\vx_{k}) - f(\vx_{*})}{f(\vx_{0}) - f(\vx_{*})} \\
        \leq & \left( 1- \frac{1}{2} \left( \prod_{i = 0}^{k - 1} \frac{\hq_{i}\cos^{2}(\htheta_{i})}{\min\{1 + 2 M \eta_{i}, \varkappa\} \hatm_{i}}\right)^{\frac{1}{k}} \right)^{k} \\
        \leq & \left(1 - \exp \left( -\frac{\Psi(\tmB_{0}) + \sum_{i = 0}^{k - 1} C_{i} + 2 \sum_{i = 0}^{k - 1} \ln (1 + C_{i}) + 2 \sum_{i = 0}^{k - 1} \ln \bigl(1 + \sqrt{2} M \sqrt{f(\vx_{k}) - f(\vx_{*})}\bigr)}{k} \right)  \right)^{k} \\
        \leq & \left( \frac{\Psi(\tmB_{0}) + 3 \sum_{i = 0}^{k - 1} C_{i} + 2 \sqrt{2} M \sum_{i = 0}^{k - 1} \sqrt{f(\vx_{k}) - f(\vx_{*})}}{k} \right)^{k} \\
        \leq & \left( \frac{\Psi(\tmB_{0}) + 8 \sqrt{2} \tM \sum_{i = 0}^{k - 1} \sqrt{f(\vx_{k}) - f(\vx_{*})}}{k} \right)^{k},
    \end{aligned}
    \end{equation}
    where the second inequality is based on \cref{eq:_suplinear_phase_improved_q_lower,eq:_superlinear_acc_potential}, the third inequality employs the facts $1 - \exp (- u) \leq u$ and $\ln (1 + u) \leq u$ for all $u \geq 0$, and the last inequality is due to the definition of $C_{k}$ from \eqref{def:C_k} and the fact $M \leq \tM$.

    We provide the upper bound for $\sum_{i = 0}^{k - 1}\sqrt{f(\vx_{i}) - f(\vx_{*})}$ as follows:
    \begin{equation}\label{eq:_superlinear_phase_sum_sqrt_f_improved}
    \begin{aligned}
        \sum_{i = 0}^{k - 1} \sqrt{f(\vx_{k}) - f(\vx_{*})} = & \sum_{i = 0}^{\ceil{2 \Psi(\bmB_{0})} - 1} \sqrt{f(\vx_{k}) - f(\vx_{*})} + \sum_{i = \ceil{2 \Psi(\bmB_{0})}}^{k - 1} \sqrt{f(\vx_{k}) - f(\vx_{*})} \\
        \leq & \Biggl( \ceil{2 \Psi(\bmB_{0})} + \sum_{i = \ceil{2 \Psi(\bmB_{0})}}^{k - 1} \tgamma^{\frac{i}{2}} \Biggr) \sqrt{\Delta} \\
        \leq & \Biggl( 2 \Psi(\bmB_{0}) + 1 + \sum_{i = 0}^{+\infty} \tgamma^{\frac{i}{2}} \Biggr) \sqrt{\Delta} \\
        = & \biggl( 2 \Psi(\bmB_{0}) + 1 + \frac{1}{1 - \sqrt{\tgamma}} \biggr) \sqrt{\Delta},
    \end{aligned}
    \end{equation}
    where $\tgamma = 1 - 1 / (2 \varkappa \min\{\Gamma, \varkappa\})$. 
    Here the first inequality is due to the monotonic decrease of $\{f(\vx_{k})\}_{k \geq 0}$ derived from \eqref{eq:self_con_f_dec} and the global linear convergence rate stated in Theorem~\ref{thm:linear_phase_linear_rates_1_improved}. 
    
    For the last line of \eqref{eq:_superlinear_phase_sum_sqrt_f_improved}, we have
    \begin{equation*}
        \frac{1}{1 - \sqrt{\tgamma}} = \frac{1}{1 - \sqrt{1 - 1/(2 \varkappa \min\{\Gamma, \varkappa\})}} \leq 4 \varkappa \min\{\Gamma, \varkappa\},
    \end{equation*}
    which implies
    \begin{equation}\label{eq:superlinear_phase_sum_sqrt_f_improved}
        \sum_{i = 0}^{k - 1} \sqrt{f(\vx_{k}) - f(\vx_{*})} \leq (2 \Psi(\bmB_{0}) + 1 + 4 \varkappa \min\{ \Gamma, \varkappa \}) \sqrt{\Delta}.
    \end{equation}
    Substituting \eqref{eq:superlinear_phase_sum_sqrt_f_improved} into \eqref{eq:superlinear_phase_improved_terms} yields the desired result in  \eqref{eq:superlinear_phase_improved_rate}.
\end{proof}

\begin{remark}
    The starting moment of superlinear convergence in Theorem~\ref{thm:superlinear_phase_superlinear_rates_improved} is 
    \begin{equation*}
    \begin{aligned}
        & \Psi(\tmB_{0}) + 8 \sqrt{2} \tM \sqrt{\Delta} (2 \Psi(\bmB_{0}) + 1 + 4 \varkappa \min\{\Gamma, \varkappa\}) \\
        = & \fO \Bigl(\Psi(\tmB_{0}) + \tM \sqrt{\Delta} \bigl( \Psi(\bmB_{0}) + \min\bigl\{\varkappa M^{2} \Delta, \varkappa^{2}\bigr\}\bigr)\Bigr).
    \end{aligned}
    \end{equation*}
    Compared with the result of Theorem~\ref{thm:superlinear_phase_superlinear_rates} for adaptive BFGS~(Algorithm~\ref{alg:ada_bfgs}), the starting moment shown in Theorem~\ref{thm:superlinear_phase_superlinear_rates_improved} implies that our {\SABFGS}~(Algorithm~\ref{alg:ada_bfgs_improved}) requires fewer iterations to achieve the superlinear phase with the convergence rate of $\fO((1/k)^{k})$ when $M \sqrt{\Delta} \geq  \tilde{\Omega}(\varkappa)$.
\end{remark}

\section{Numerical Experiments}\label{sec:experiments}

In this section, we compare the proposed smoothness aided adaptive BFGS ({\SABFGS}, Algorithm~\ref{alg:ada_bfgs_improved}) with adaptive BFGS (A-BFGS, Algorithm~\ref{alg:ada_bfgs}) \cite{gaoQuasiNewtonMethodsSuperlinear2019}, and the BFGS method with Armijo--Wolfe line search (LS-BFGS) \cite{jinNonasymptoticGlobalConvergence2024b}. We focus on the $\ell_2$-regularized logistic regression, which can be formulated by 
\begin{equation}\label{eq:logistic-regression}
    \min_{\vx \in \BR^{n} } f(\vx) := \frac{1}{m} \sum_{i = 1}^{m} \ln (1 + \exp (-b_{i} \va_{i}^{\top} \vx_{i})) + \frac{1}{2 m} \Norm{\vx}^{2},
\end{equation}
where $\va_i\in\BR^n$ is the feature of the $i$-th sample, $b_i\in\{+1,-1\}$ is the corresponding label. We can verify that the self-concordant and smoothness parameters of the objective in \eqref{eq:logistic-regression} have upper bounds
\begin{equation}\label{eq:logistic_M_L}
    M_{\mathrm{lr}} = \frac{A \sqrt{m}}{2} 
    \quad \text{and} \quad 
    L_{\mathrm{lr}} = \frac{\lambda_{\max}(\mA \mA^{\top})}{4 m}  + \frac{1}{m},
\end{equation}
respectively \cite{zhangCommunicationefficientDistributedOptimization2018,linExplicitConvergenceRates2022}.
Here, we denote $A := \max_{i \in [m]} \Norm{\va_{i}}$ and $\mA := [\va_{1}, \va_{2}, \dots, \va_{n}] \in \BR^{n \times m}$. 
In addition, the strongly convex parameter of the objective can be lower bounded by $\mu_{\mathrm{lr}}= 1 / m$. 

We conduct our experiments on three datasets ``mushrooms'' ($n = 112$, $m = 8124$), ``w8a''($n = 300$, $m = 49,749$), and ``real-sim''($n = 20,958$, $m = 72,309$) from LIBSVM repository~\cite{changLIBSVMLibrarySupport2011}. 
We set the initial point as $\vx_{0} = \vone \in \BR^{n}$, and let the initial estimator of the inverse Hessian be either $\mB_{0} = L_{\mathrm{lr}} \mI$ or $\mB_{0} = \mu_{\mathrm{lr}} \mI$.
We tune the parameters $M$ and $L$ for both {\SABFGS} and A-BFGS from $\{M_{\mathrm{lr}}, M_{\mathrm{lr}} / 10, M_{\mathrm{lr}} / 10^{2}, M_{\mathrm{lr}} / 10^{3}\}$ and $\{L_{\mathrm{lr}}, L_{\mathrm{lr}} / 5, L_{\mathrm{lr}} / 5^{2}, L_{\mathrm{lr}} / 5^{3}\}$, respectively, where $M_{\mathrm{lr}}$ and $L_{\mathrm{lr}}$ follow the definitions in \eqref{eq:logistic_M_L}.
For LS-BFGS, we follow the setup of \citet[Algorithm~1]{jinNonasymptoticGlobalConvergence2024b} by using the logarithmic bisection to determine the step size with the line search parameters $\alpha = 0.1$ and $\beta = 0.9$.

We present the results for the function value gap $f(\vx) - f(\vx_{*})$ against the number of iterations in Figure~\ref{fig:logistic_iter}.
We observe that the proposed {\SABFGS} enter the superlinear convergence phase with fewer iterations in most cases compared to the baselines.
On the other hand, each iteration of A-BFGS and {\SABFGS} requires one additional Hessian–vector product (HVP) to determine the step size, while each iteration of LS-BFGS may involve multiple function value and gradient evaluations due to the line search.
Therefore, we also present the results of the function value gap against the number of function value, gradient, and HVP evaluations in Figure~\ref{fig:logistic_eval}.
We observe that the proposed {\SABFGS} performs better than the baseline methods in terms of the number of evaluations. 
In addition, initialization with $\mB_{0} = \mu_{\mathrm{lr}} \mI$ typically yields better convergence behaviors than initialization with $\mB_{0} = L_{\mathrm{lr}} \mI$, which is consistent with the empirical study of \citet{jinNonasymptoticGlobalConvergence2024b}. 

\begin{figure}[t]
    \centering
    \begin{tabular}{ccc}
        \includegraphics[scale=0.38]{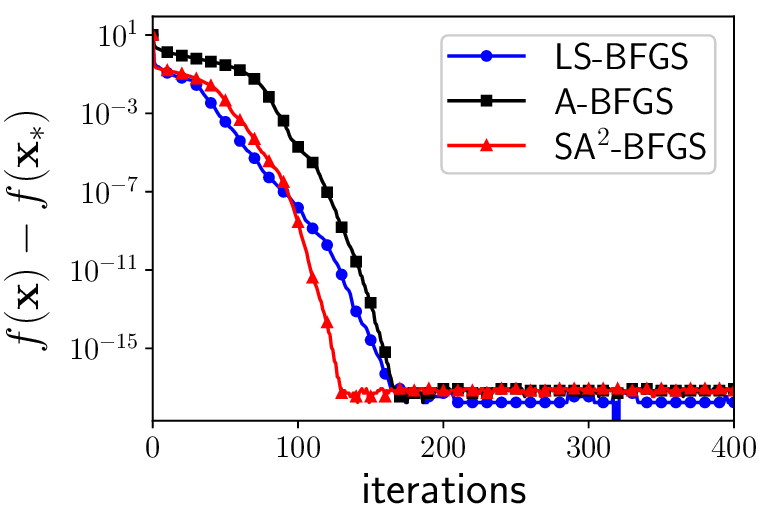}  & 
        \includegraphics[scale=0.38]{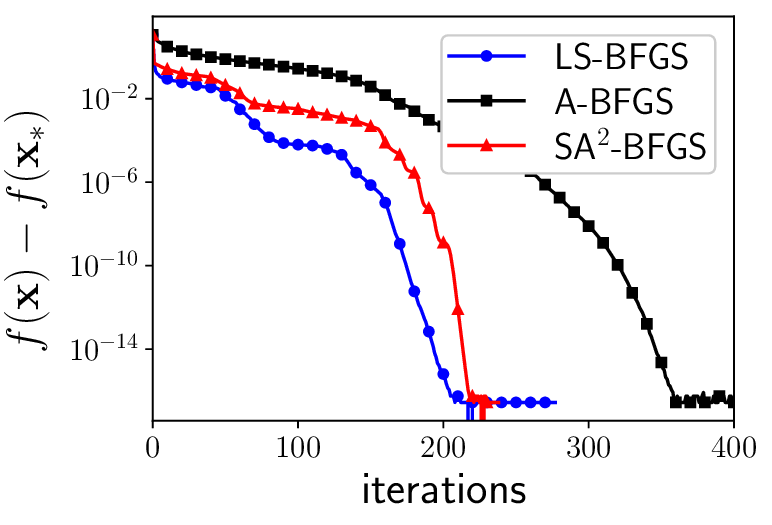} & 
        \includegraphics[scale=0.38]{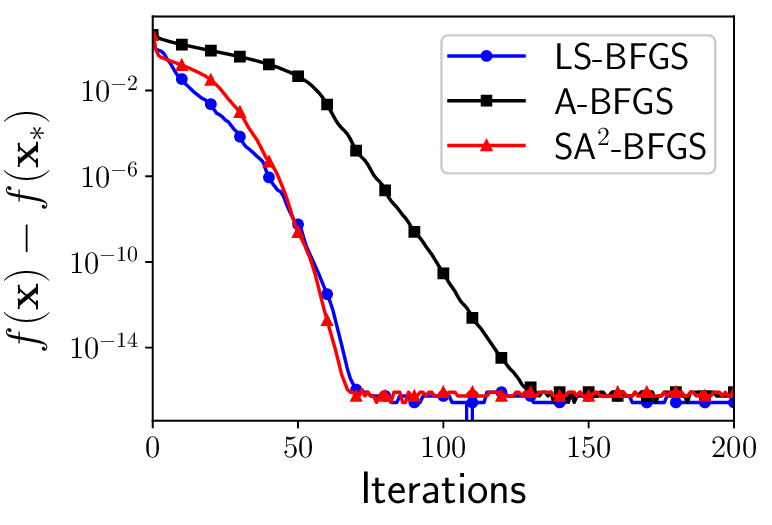} \\
        \small (a) mushrooms with $\mB_0=\mu_{\rm lr}\mI$  & 
        \small (b) w8a with $\mB_0=\mu_{\rm lr}\mI$ & 
        \small (c) real-sim with $\mB_0=\mu_{\rm lr}\mI$ \\[0.25cm]
        \includegraphics[scale=0.38]{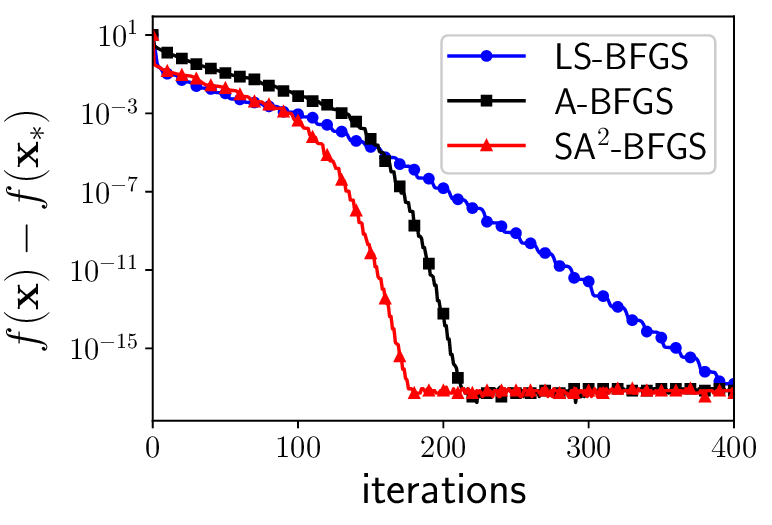}  & 
        \includegraphics[scale=0.38]{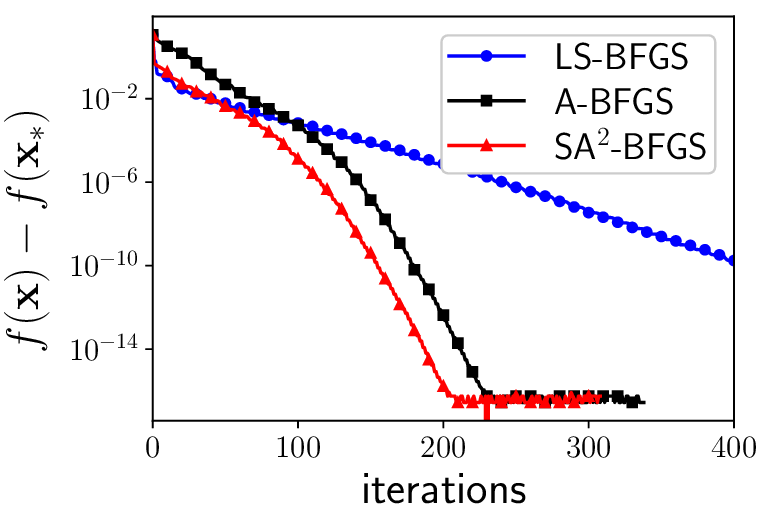}  & 
        \includegraphics[scale=0.38]{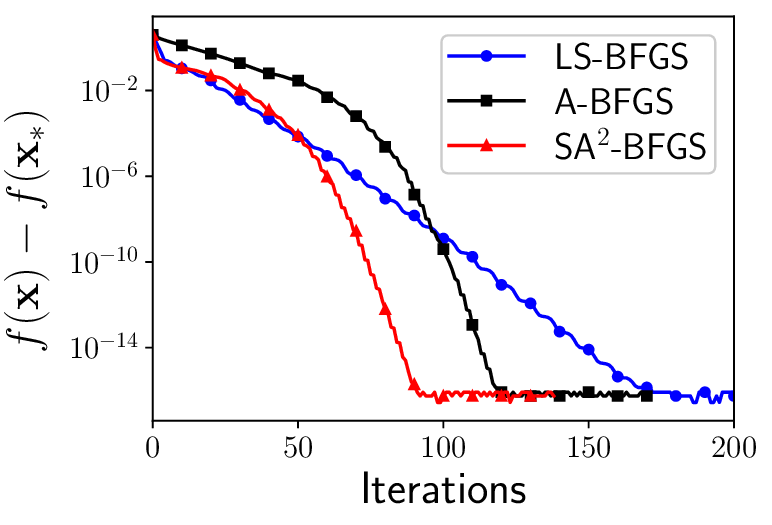} \\
        \small (d) mushrooms with $\mB_0=L_{\rm lr}\mI$  & 
        \small (e) w8a with $\mB_0=L_{\rm lr}\mI$ & 
        \small (f) real-sim with $\mB_0=L_{\rm lr}\mI$ \\
        \end{tabular}
        \caption{We compare our {\SABFGS} with LS-BFGS and A-BFGS in terms of function value gap $f(\vx) - f(\vx_{*})$ versus the number of iterations with the initializations $\mB_{0} = \mu_{\mathrm{lr}} \mI$ and $\mB_{0} = L_{\mathrm{lr}} \mI$.}
        \label{fig:logistic_iter}
\end{figure}

\begin{figure}[t]
    \centering
    \begin{tabular}{ccc}
        \includegraphics[scale=0.38]{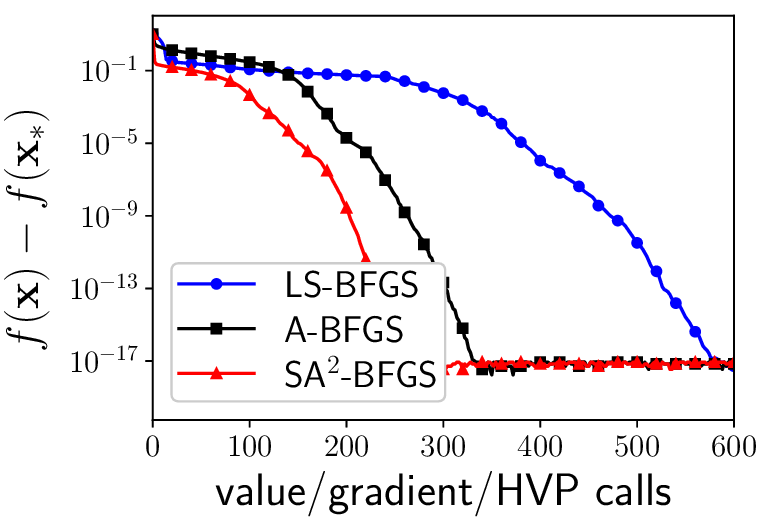}  & 
        \includegraphics[scale=0.38]{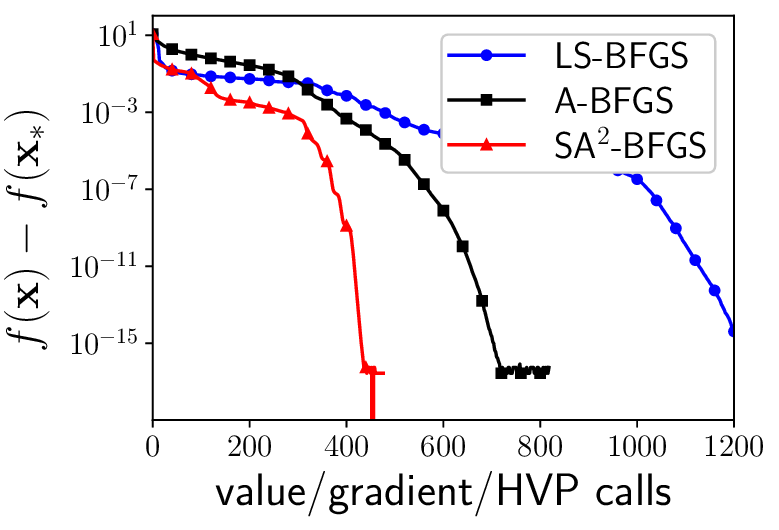}  & 
        \includegraphics[scale=0.38]{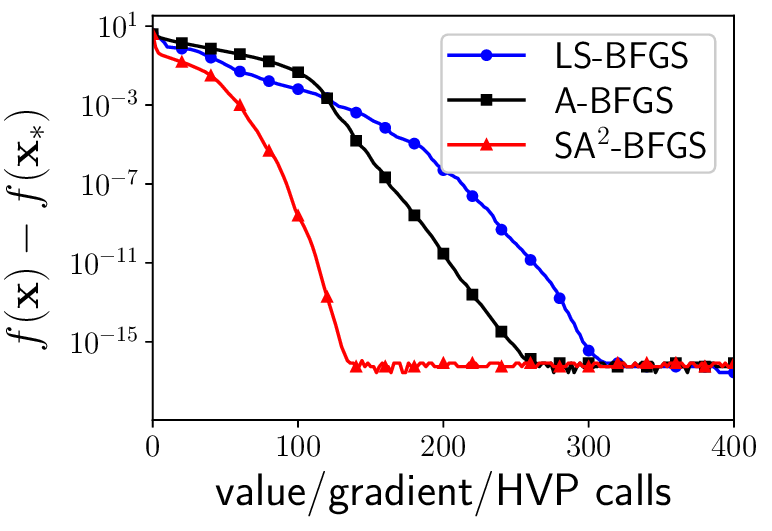} \\
        \small (a) mushrooms with $\mB_0=\mu_{\rm lr}\mI$  & 
        \small (b) w8a with $\mB_0=\mu_{\rm lr}\mI$ & 
        \small (c) real-sim with $\mB_0=\mu_{\rm lr}\mI$ \\[0.25cm]
        \includegraphics[scale=0.38]{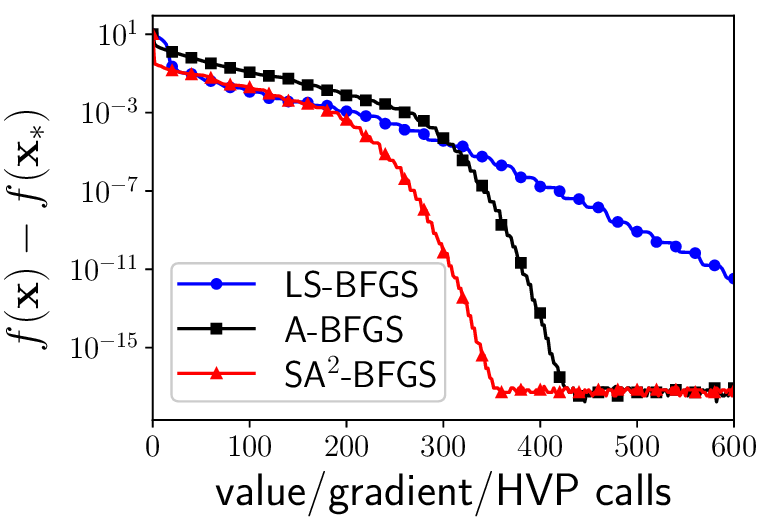}  & 
        \includegraphics[scale=0.38]{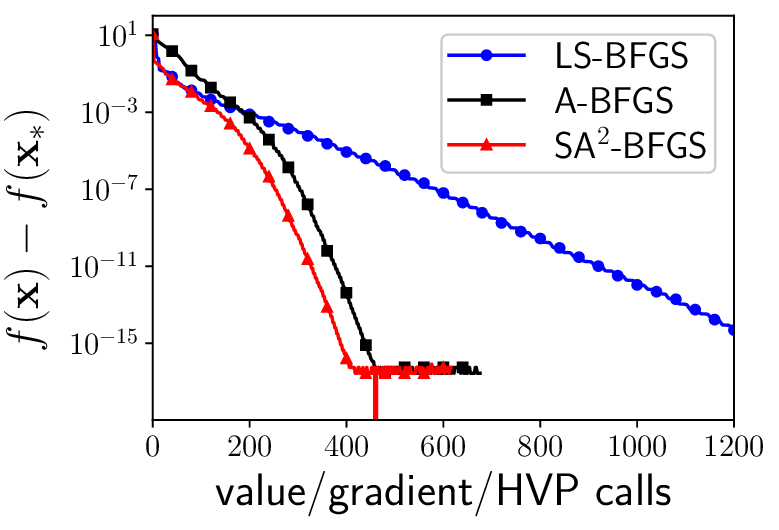}  & 
        \includegraphics[scale=0.38]{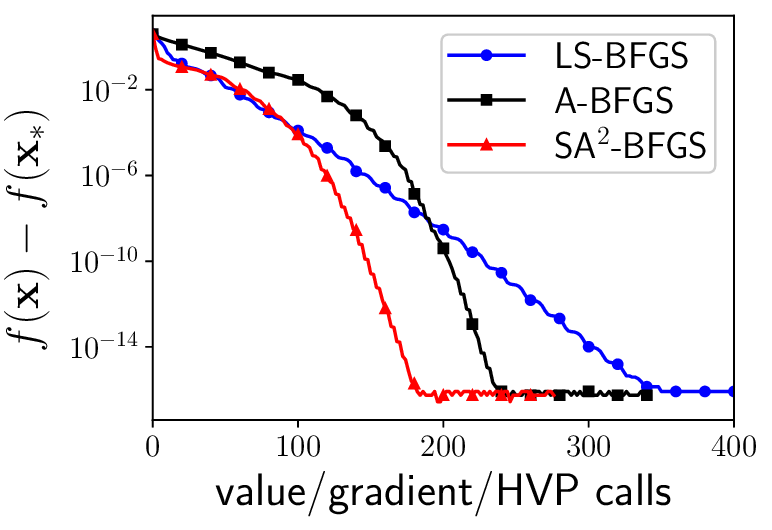} \\
        \small (d) mushrooms with $\mB_0=L_{\rm lr}\mI$  & 
        \small (e) w8a with $\mB_0=L_{\rm lr}\mI$ & 
        \small (f) real-sim with $\mB_0=L_{\rm lr}\mI$ \\
        \end{tabular}
        \caption{We compare our {\SABFGS} with LS-BFGS and A-BFGS in terms of function value gap $f(\vx) - f(\vx_{*})$ versus the
        the number of evaluations on function value, gradient, and Hessian–vector product (HVP) with the initializations $\mB_{0} = \mu_{\mathrm{lr}} \mI$ and $\mB_{0} = L_{\mathrm{lr}} \mI$.}
        \label{fig:logistic_eval}
\end{figure}

\section{Conclusion}

In this work, we revisit the adaptive BFGS method~\cite{gaoQuasiNewtonMethodsSuperlinear2019} by establishing its explicit global linear convergence and superlinear local convergence.
Our result helps to better understand the behavior of BFGS methods without line search, as the previous analysis only provides asymptotic convergence.
We further propose a smoothness aided adaptive BFGS method, enjoying the better convergence behavior.
In future work, we are interested in extending our idea to address the objective that only satisfies the strictly convex assumption~\cite{jinAffineinvariantGlobalNonasymptotic2025} and studying the other quasi-Newton methods such as DFP and SR1.

\appendix

\section{Proofs in Section~\ref{sec:pre}}\label{proof:ada_bfgs_properties}

We present the proof of Lemma~\ref{lem:ada_bfgs_properties}, which extends Theorems 4.1, 6.2 and 6.3 of \citet{gaoQuasiNewtonMethodsSuperlinear2019} to the general self-concordant parameter $M$. 

\begin{proof}
    According to the definition of $\eta_{k}$ in \eqref{def:eta}, the adaptive step size $t_{k}$ can be written as
    \begin{equation}\label{eq:appendix-t-k}
        t_{k} = \frac{\eta_{k}}{\lNorm{\vd_{k}}{\vx_{k}}(1 + M \eta_{k})}.
    \end{equation}
    We substitute \eqref{eq:appendix-t-k} into \eqref{eq:self_con_f_upper} to prove \eqref{eq:self_con_f_dec} as follows:
    \begin{equation*}
    \begin{aligned}
        f(\vx_{k + 1}) & \leq f(\vx_{k}) + \frac{\vg_{k}^{\T} \vd_{k}}{M \lNorm{\vd_{k}}{\vx_{k}}} \frac{M \eta_{k}}{1 + M \eta_{k}} + \frac{1}{M^{2}} \left(- \frac{M \eta_{k}}{1 + M \eta_{k}} - \ln \left( 1 - \frac{M \eta_{k}}{1 + M \eta_{k}} \right) \right) \\
        & = f(\vx_{k}) - \frac{1}{M^{2}} \frac{(M \eta_{k})^{2}}{1 + M \eta_{k}} - \frac{1}{M^{2}} \frac{M \eta_{k}}{1 + M \eta_{k}} + \frac{1}{M^{2}} \ln (1 + M \eta_{k}) \\
        & = f(\vx_{k}) - \frac{1}{M^{2}} \left( 1 + M \eta_{k} - \ln (1 + M \eta_{k}) \right) \\
        & = f(\vx_{k}) - \frac{\omega(M \eta_{k})}{M^{2}}.
    \end{aligned}
    \end{equation*}
    According to \eqref{eq:self_con_f_dec}, proving \eqref{eq:armijo_condition} suffices to show that
    \begin{equation}\label{eq:appendix-omega}
        \omega(M \eta_{k}) \geq \frac{1}{2} M^{2} t_{k} (- \vg_{k}^{\T} \vd_{k}).
    \end{equation}
    The right-hand side of \eqref{eq:appendix-omega} holds that
    \begin{equation*}
        M^{2} t_{k} (- \vg_{k}^{\T} \vd_{k}) = M^{2} \frac{1}{\lNorm{\vd_{k}}{\vx_{k}}} \frac{\eta_{k}}{1 + M \eta_{k}} (- \vg_{k}^{\T} \vd_{k}) = \frac{(M \eta_{k})^{2}}{1 + M \eta_{k}},
    \end{equation*}
    where we again use the definitions of $t_k$ and $\eta_{k}$ in \cref{eq:appendix-t-k,def:eta}, respectively. 
    Thus, it remains to verify that 
    \begin{equation*}
        \omega(M \eta_{k}) \geq \frac{(M \eta_{k})^{2}}{2(1 + M \eta_{k})}.
    \end{equation*}
    From Lemma~5.1.5 in~\cite{nesterovLecturesConvexOptimization2018}, we know that
    \begin{equation*}
        \omega(z) \geq \frac{z^{2}}{2 (1 + z)}
    \end{equation*}
    for all $z \in [0, +\infty)$. Substituting $z = M \eta_{k} > 0$ leads the desired result.
    
    Applying \eqref{eq:self_con_grad_upper} with $\vx=\vx_k$, $\vd=\vd_k$, and $t=t_k$, we have
    \begin{equation}\label{eq:min-t-k}
        \vg_{k + 1}^{\T} \vd_{k} \leq \vg_{k}^{\T} \vd_{k} + \frac{\lNorm{\vd_{k}}{\vx_{k}} \eta_{k} / (1 + M \eta_{k})} {1 - M \eta_{k}/ (1 + M \eta_{k})} = \vg_{k}^{\T} \vd_{k} + \eta_{k} \lNorm{\vd_{k}}{\vx_{k}} = 0.
    \end{equation}
    On the other hand, applying \eqref{eq:self_con_grad_lower} with $\vx=\vx_k$, $\vd=\vd_k$, and $t=t_k$, we have
    \begin{equation*}
        \vg_{k + 1}^\T \vd_{k} \geq \vg_{k}^{\T} \vd_{k} + \frac{\lNorm{\vd_{k}}{\vx_{k}} \eta_{k}/(1 + M \eta_{k})}{1 + M \eta_{k}/(1 + M \eta_{k})} = \vg_{k}^{\T} \vd_{k} - \frac{1}{1 + 2 M \eta_{k}}  \vg_{k}^{\T} \vd_{k} = \frac{2 M \eta_{k}}{1 + 2 M \eta_{k}} \vg_{k}^{\T} \vd_{k}.
    \end{equation*}
    Together with above two inequalities, we establish \eqref{eq:curvature_condition}.
\end{proof}

\begin{remark}
    Recall that the right-hand side of \eqref{eq:self_con_grad_upper} is the derivative of the right-hand side of \eqref{eq:self_con_f_upper} with respect to $t$, and we have shown that it is zero by taking $t=t_k$ in \eqref{eq:min-t-k}. 
    Therefore, taking $t=t_{k}$ minimizes the right-hand side of \eqref{eq:self_con_f_upper} with respect to $t$.
\end{remark}

\section{Proofs in Section~\ref{sec:linear_phase}}

This section provides the missing proofs for the global linear convergence of adaptive BFGS~(Algorithm~\ref{alg:ada_bfgs}).

\subsection{Proof of Lemma~\ref{lem:ineq_arith_geo}}\label{proof:ineq_arith_geo}

\begin{proof}
    We prove this lemma by applying the inequality of arithmetic and geometric means twice, that is,
    \begin{equation*}
        \prod_{i = 0}^{k - 1} (1 - a u_{i}) \leq \left( \frac{\sum_{i = 0}^{k - 1} (1 - a u_{i})}{k} \right)^{k} = \left( 1 - \frac{a\sum_{i = 0}^{k - 1} u_{i}}{k} \right)^{k} \leq \left( 1 - a \left( \prod_{i = 0}^{k - 1} u_{i} \right)^{\frac{1}{k}} \right)^{k}.
    \end{equation*}
\end{proof}

\subsection{Proof of Lemma~\ref{lem:omega_bounds}}\label{proof:omega_bounds}

\begin{proof}[Proof of Lemma~\ref{lem:omega_bounds}]
From Lemma~5.1.5 in~\cite{nesterovLecturesConvexOptimization2018}, it holds that
\begin{equation*}
    \omega(z) \geq \frac{z^{2}}{2 (1 + z)}
\end{equation*}
for all $z \in [0, +\infty)$. 
For $z \in [0, 1)$, above inequality implies
\begin{equation*}
    \omega(z) \geq \eval{\left(\frac{1}{2 (1 + z)}\right)}{z = 1} z^{2} = \frac{z^{2}}{4}.
\end{equation*}
For $z \in [1, +\infty)$, we have 
\begin{equation*}
    \omega(z) \geq \frac{z}{2 (1 + z)} z \geq \eval{\left(\frac{z}{2 (1 + z)}\right)}{z = 1} z = \frac{z}{4},
\end{equation*}
where we use the fact that $z/ (1 + z)$ is increasing in $z$. Combining the above two inequalities yields~\eqref{eq:omega_lower}. 

The proof of~\eqref{eq:omega_inv_upper} can be found in Lemma~A.1 of~\citet{rodomanovGlobalComplexityAnalysis2024}.
\end{proof}

\subsection{Proof of Lemma~\ref{lem:dec_sqrt}}\label{proof:dec_sqrt}

\begin{proof}
    Since the sequence $\{y_{k}\}_{k \geq 0}$ is strictly decreasing, we immediately get
    \begin{equation*}
        \sqrt{y_{k + 1}} - \sqrt{y_{k}} = \frac{y_{k + 1} - y_{k}}{\sqrt{y_{k + 1}} + \sqrt{y_{k}}} \leq \frac{- \gamma_{k} \sqrt{y_{k}}}{2 \sqrt{y_{k}}} = - \frac{\gamma_{k}}{2}.
    \end{equation*}
    This completes the proof.
\end{proof}

\subsection{Proof of Lemma~\ref{lem:dec_aux_func}}\label{proof:dec_aux_func}

\begin{proof}
    we take the derivative on $\ln \xi(u)$ to obtain
    \begin{equation*}
        \diff{\ln \xi(u)}{u} = \ln (1 - a b^{\frac{1}{u}}) - \frac{a b^{\frac{1}{u}}}{u (1 - a b^{\frac{1}{u}})} \ln \frac{1}{b} < 0
    \end{equation*}
    for all $u \in (0, +\infty)$, which means $\xi(u)$ is a decreasing function.
\end{proof}

\subsection{Proof of Lemma~\ref{lem:aux_func_2}}\label{proof:aux_func_2}

\begin{proof}
    Our proof consists of two parts: the first shows that the function $\zeta(u)$ is well-defined, and the second provides the upper bounds for $\zeta(u)$.
    
    \paragraph{The Well-Definedness of $\zeta(u)$}
    We introduce the auxiliary function 
    \begin{equation}\label{eq:aux_func_2_aux1_def}
        \phi_{1}(u) \defeq a \Biggl(b \Bigl(\max\Bigl\{\frac{u}{s}, 1\Bigr\}\Bigr)^{2 u} \biggl(\frac{1}{\varkappa}\biggr)^{u}\Biggr)^{\frac{1}{k - u}},
    \end{equation}
    where $u \in [0, \bar{u})$. It suffices to prove that $\phi_{1}(u) < 1$ for all $u \in [0, \bar{u})$.

    In the case of $\bar{u} \leq s$, we have $0 \leq u < s$ for all $u \in [0, \bar{u})$.
    Consequently, we have 
    \begin{equation}\label{eq:phi1-1}
        \phi_{1}(u) = a \left( \frac{b}{\varkappa^{u}} \right)^{\frac{1}{k - u}} < 1.
    \end{equation}
    In the case of $\bar{u} > s$, we only need to consider the case $u \in (s, \bar{u})$. 
    Otherwise, when $u \in [0, s]$, we can also derive \eqref{eq:phi1-1} by the above derivation. 
    Therefore, we focus on analyzing the behavior of $\phi_1(u)$ on $u \in (s, \bar{u})$ in the remainder. 
    We take the derivation of $\ln\phi_{1}(u)$ with respect to $u$ as
    \begin{equation}\label{eq:aux_func_2_aux1_diff}
    \begin{aligned}
        \diff{\ln \phi_{1}(u)}{u} & = \diff{\Biggl(\ln a + \frac{1}{k - u} \biggl(\ln b + 2 u \ln \frac{u}{\sqrt{\varkappa} s}\biggr)\Biggr)}{u} \\
        & = \frac{1}{(k - u)^2} \underbrace{\biggl(\ln b + 2 k \ln \frac{u}{\sqrt{\varkappa} s} + 2 (k - u) \biggr) }_{\phi_{1}^{1}(u)}.
    \end{aligned}
    \end{equation}
    Observe that
    \begin{equation*}
        \diff{\phi_{1}^{1}(u)}{u} = 
        \diff{\left( \ln b + 2 k \ln \frac{u}{\sqrt{\varkappa} s} + 2 (k - u) \right)}{u} = 2 \left( \frac{k}{u} - 1\right) > 0,
    \end{equation*}
    where we use the definition of $\bar{u}$ in \eqref{def:bar_u} to directly conclude that $k \geq \bar{u} > u$. 
    Therefore, the function $\phi_{1}^{1}(u)$ is strictly increasing on $(s, \bar{u})$. 
    Moreover, we know that $\diff{\ln \phi_{1}(u)}{u}$ has the same sign as $\phi_{1}^{1}(u)$ by \eqref{eq:aux_func_2_aux1_diff}. 
    Therefore, the function $\diff{\ln \phi_{1}(u)}{u}$ can change the sign at most once on $(s,\bar u)$.
    This leads to the following three possibilities:
    \begin{enumerate}
        \item $\diff{\ln \phi_{1}(u)}{u} > 0$ for all $u \in (s, \bar{u})$.
        \item $\diff{\ln \phi_{1}(u)}{u} < 0$ for all $u \in (s, \bar{u})$.
        \item $\diff{\ln \phi_{1}(u)}{u}$ changes sign on $(s, \bar{u})$, that is, there exists $u_{1, *} \in (s, \bar{u})$ such that $\diff{\ln \phi_{1}(u)}{u} < 0$ for $u \in (s, u_{1, *})$ and $\diff{\ln \phi_{1}(u)}{u} > 0$ for $u \in (u_{1, *}, \bar{u})$.
    \end{enumerate}
    Considering the above three possibilities, we have
    \begin{equation*}
        \phi_{1}(u) < \max\Bigl\{ \phi_{1}(s), \lim_{v \to \bar{u}- 0} \phi_{1}(v)\Bigr\}
    \end{equation*}
    for all $u \in (s, \bar{u})$.
    Since we have $\phi_{1}(s) < 1$ and the definition of $\bar{u}$ in \eqref{def:bar_u} implies $\lim_{v \to \bar{u} - 0} \phi_{1}(v) \leq 1$ , it follows that~$\phi_{1}(u) < 1$.

    Thus, we conclude that $\phi_{1}(u) < 1$ for all $u \in [0, \bar{u})$.

    \paragraph{The upper bounds for $\zeta(u)$}
    We now prove \eqref{eq:aux_func_2_upper_1} for $k \geq 2 \ln (1 / b) + 2 \sqrt{\varkappa} s$.
    Based on the definition of $\phi_1(u)$ in \eqref{eq:aux_func_2_aux1_def}, we have 
    \begin{equation}\label{eq:phi_1_u_s}
        \phi_{1}(\sqrt{\varkappa} s) = a b^{\frac{1}{k - \sqrt{\varkappa} s}} < 1.
    \end{equation}
    Furthermore, the definition of $\bar{u}$ in \eqref{def:bar_u} implies that $\bar u$ is the supremum of $\{u\in(0, k):\phi_1(\sqrt{\varkappa} s) < 1\}$.
    Therefore, \eqref{eq:phi_1_u_s} implies $\bar{u} > \sqrt{\varkappa} s$.

    Next, for any $u_{1}, u_{2}$ satisfying $0 \leq u_{1} < u_{2} \leq s$, we have
    \begin{equation*}
    \begin{aligned}
        \zeta(u_{1}) & = \left( 1 - a \left( \frac{b}{\varkappa^{u_{1}}}\right)^{\frac{1}{k - u_{1}}} \right)^{k - u_{1}} \\
        & \leq \left( 1 - a \left( \frac{b}{\varkappa^{u_{2}}}\right)^{\frac{1}{k - u_{1}}} \right)^{k - u_{1}} \\
        & \leq \left( 1 - a \left( \frac{b}{\varkappa^{u_{2}}}\right)^{\frac{1}{k - u_{2}}} \right)^{k - u_{2}} \\
        &= \zeta(u_{2}),
    \end{aligned}
    \end{equation*}
    where the second inequality follows from Lemma~\ref{lem:dec_aux_func}. 
    This implies that $\zeta(u)$ is monotonically increasing on the interval $[0, s]$. 
    Therefore, the upper bound of $\zeta(u)$ cannot be attained when $u\in[0,s)$, that is,
    \begin{equation}\label{eq:aux_func_2_restrict}
        \sup_{u \in [0, \bar{u})} \zeta(u) = \sup_{u \in [s, \bar{u})} \zeta(u).
    \end{equation}
    This means we only need to consider the case of $u\in [s, \bar{u})$. 
    
    For $u \in [s, \bar{u})$, taking the logarithm of $\zeta(u)$ achieves
    \begin{equation}\label{eq:aux_func_2_ineq_1}
    \begin{aligned}
        \ln \zeta(u) \overset{\eqref{eq:aux_func_2_aux1_def}}{=} (k - u) \ln \left( 1 - \phi_{1}(u) \right) 
         \leq - (k - u) \phi_{1}(u),
    \end{aligned}
    \end{equation}
    where the last step is based on the inequality $\ln (1 - z) \leq - z$ for $z \in [0, 1)$.

    We define the auxiliary function 
    \begin{equation}\label{eq:aux_func_2_aux2_def}
    \begin{aligned}
        \phi_{2}(u) & \defeq \ln (k - u) + \ln \phi_{1}(u) \\
        & = \ln (k - u) + \frac{1}{k - u} \left( \ln b + 2 u \ln \frac{u}{\sqrt{\varkappa} s} \right) + \ln a 
    \end{aligned}
    \end{equation}
    for $u\in[s, \bar{u})$.
    Then \eqref{eq:aux_func_2_ineq_1} can be written as
    \begin{equation}\label{eq:aux_func_2_ineq_2}
        \ln \zeta(u) \leq - \exp \left( \phi_{2}(u) \right).
    \end{equation}
    Taking the derivative on $\phi_{2}(u)$, we have
    \begin{equation}\label{eq:aux_func_2_aux2_diff}
        \diff{\phi_{2}(u)}{u} = \frac{1}{(k - u)^{2}} \underbrace{\left( k - u + \ln b + 2 k \ln \frac{u}{\sqrt{\varkappa} s} \right)}_{\phi_{2}^{1}(u)}.
    \end{equation}
    The condition $k > 2 \ln (1 / b) + 2 \sqrt{\varkappa}s$ implies
    \begin{equation*}
        \phi_{2}^{1}(\sqrt{\varkappa} s) = k - \sqrt{\varkappa} s + \ln b > 0.
    \end{equation*}
    The definition of $\bar{u}$ in \eqref{def:bar_u} indicates $k \geq \bar{u} > u$, which leads to 
    \begin{equation*}
        \diff{\phi_{2}^{1}(u)}{u} = \frac{2k}{u} - 1 > 0.
    \end{equation*}
    Hence, we have $\phi_{2}^{1}(u) > 0$ holds on $u\in[\sqrt{\varkappa} s, \bar{u})$. 
    Consequently, we have
    \begin{equation*}
        \diff{\phi_{2}(u)}{u} = \frac{\phi_{2}^{1}(u)}{(k - u)^{2}} > 0 
    \end{equation*}
    for all $u \in [\sqrt{\varkappa} s, \bar{u})$.
    It follows that the minimum value of $\phi_{2}(u)$ over $[s, \bar{u})$ is attained in $[s, \sqrt{\varkappa} s)$. We let~$u_{2, *} \in [s, \sqrt{\varkappa} s)$ be the minimizer of $\phi_2(u)$.
    Then we have
    \begin{equation}\label{eq:aux_func_2_aux2_restrict}
        \inf_{u \in [s, \bar{u})} \phi_{2}(u) = \inf_{u \in [s, \sqrt{\varkappa} s)} \phi_{2}(u) = \phi_{2}(u_{2, *}),
    \end{equation}
    which implies 
    \begin{equation}\label{eq:aux_func_2_aux2_pos}
        \evaldiff{\phi_{2}(u)}{u}{u_{2, *}} \geq 0.
    \end{equation}
    If \eqref{eq:aux_func_2_aux2_pos} does not hold, i.e., $\evaldiff{\phi_{2}(u)}{u}{u_{2, *}} < 0$, then there would exist a sufficiently small $\delta > 0$ such that $u_{2, *} + \delta \in [s, \sqrt{\varkappa}s)$ and $\phi_{2} (u_{2, *} + \delta) < \phi_{2}(u_{2, *})$, contradicting the assumption that $u_{2, *}$ is a minimizer.

    Combining \cref{eq:aux_func_2_aux2_diff,eq:aux_func_2_aux2_pos}, we have
     \begin{equation*}
        \frac{1}{(k - u_{2,*})^{2}} \left( k - u_{2,*} + \ln b + 2 k \ln \frac{u_{2, *}}{\sqrt{\varkappa} s} \right) \geq 0.
    \end{equation*}
    Rearranging above result leads to
    \begin{equation}\label{eq:aux_func_2_aux2_pos_res}
        \ln \frac{u_{2, *}}{\sqrt{\varkappa} s} \geq - \frac{k - u_{2, *}}{2 k} + \frac{1}{2 k} \ln \frac{1}{b}.
    \end{equation}
    Based on the definition of $\phi_{2}(u)$ in \eqref{eq:aux_func_2_aux2_def}, we have
    \begin{equation}\label{eq:aux_func_2_aux2_lower_bound}
    \begin{aligned}
        \phi_{2}(u_{2, *}) & ~=~ \ln (k - u_{2, *}) + \frac{1}{k - u_{2, *}} \left( \ln b + 2 u_{2, *} \ln \frac{u_{2, *}}{\sqrt{\varkappa} s} \right) + \ln a \\
        & ~\overset{\mathclap{\eqref{eq:aux_func_2_aux2_pos_res}}}{\geq}~ \ln (k - u_{2, *}) + \frac{1}{k - u_{2, *}} \left( \ln b - \frac{u_{2, *}(k - u_{2, *})}{k} + \frac{u_{2, *}}{k} \ln \frac{1}{b} \right) + \ln a \\
        & ~=~ \ln (k - u_{2, *}) - \frac{1}{k - u_{2, *}} \left(   \frac{u_{2, *}(k - u_{2, *}) }{k} + \frac{k - u_{2, *}}{k} \ln \frac{1}{b}\right) + \ln a\\
        & ~=~ \ln (k - u_{2, *}) - \frac{u_{2, *} + \ln (1 / b)}{k} + \ln a \\
        & ~>~ \ln (k - \sqrt{\varkappa} s) - \frac{\sqrt{\varkappa} s + \ln (1 / b)}{k} + \ln a \\
        & ~\geq~ \ln (k - \sqrt{\varkappa} s) - \frac{1}{2} + \ln a,
    \end{aligned}
    \end{equation}
    where the second inequality uses the fact that $u_{2, *} \in [s, \sqrt{\varkappa} s)$, and the last inequality is based on the condition $k \geq 2 \ln (1 / b) + 2 \sqrt{\varkappa} s$.
    
    Combining \cref{eq:aux_func_2_restrict,eq:aux_func_2_ineq_2,eq:aux_func_2_aux2_restrict,eq:aux_func_2_aux2_lower_bound} leads to
    \begin{equation}\label{eq:aux_func_2_upper_0}
    \begin{aligned}
        \sup_{u \in [0, \bar{u})} \zeta(u) & ~\overset{\mathclap{\eqref{eq:aux_func_2_restrict}}}{=}~ \sup_{u \in [s, \bar{u})} \zeta(u) \\
        & ~\overset{\mathclap{\eqref{eq:aux_func_2_ineq_2}}}{\leq}~ \sup_{u \in [s, \bar{u})} \exp \left( - \exp \left( \phi_{2}(u) \right) \right) \\
        & ~=~ \exp \biggl(- \exp \biggl(\inf_{u \in [s, \bar{u})} \phi_{2}(u) \biggr)\biggr) \\
        & ~\overset{\mathclap{\eqref{eq:aux_func_2_aux2_restrict}}}{=}~ \exp \left( - \exp \left( \phi_{2}(u_{2, *}) \right) \right) \\
        & ~\overset{\mathclap{\eqref{eq:aux_func_2_aux2_lower_bound}}}{<}~ \exp \biggl(- a \exp \biggl(- \frac{1}{2}\biggr) \bigl(k - \sqrt{\varkappa} s\bigr)\biggr).
    \end{aligned}
    \end{equation}
    Based on the fact that $\exp \left( - 1 / 2 \right) > 1 / 2$, we immediately obtain \eqref{eq:aux_func_2_upper_1}.

    Moreover, when $a \leq 1 / 3$, \eqref{eq:aux_func_2_upper_0} indicates that proving \eqref{eq:aux_func_2_upper_2} only requires showing that 
    \begin{equation*}
        \exp \left( - a \exp \left( - \frac{1}{2} \right) \right) \leq 1 - \frac{a}{2},
    \end{equation*}
    which is equivalent to
    \begin{equation*}
        -a \exp \left( - \frac{1}{2} \right)  \leq \ln \left( 1 - \frac{a}{2} \right).
    \end{equation*}
    It suffices to prove that the inequality 
    \begin{equation}\label{eq:zexpln}
        z \exp \left( - \frac{1}{2} \right) + \ln \left(1 - \frac{z}{2} \right) \geq 0
    \end{equation}
    holds for all $z \in [0, 1 / 3]$, which is easy to verify since the left-hand side of \eqref{eq:zexpln} equals to zero at $z = 0$, and its derivative satisfies
    \begin{align*}
         & \diff{\left( z \exp \left( - \frac{1}{2} \right) + \ln \left(1 - \frac{z}{2}\right) \right)}{z} \\
         = & \exp \left( - \frac{1}{2} \right) - \frac{1}{2 - z}  \\
         \geq & \exp \left( - \frac{1}{2} \right) - \frac{3}{5} > 0
    \end{align*}
    for all $z \in [0, 1 / 3]$. 
    Thus, we have proved \eqref{eq:aux_func_2_upper_2}.
\end{proof}

\section{Proofs in Section~\ref{sec:improved}}

This section provides the missing proofs for the local superlinear convergence of adaptive BFGS~(Algorithm~\ref{alg:ada_bfgs}).

\subsection{Proof of Lemma~\ref{lem:self_con_bound_improved}}\label{proof:self_con_bound_improved}

\begin{proof}
    We define 
    \begin{align*}
        \varphi(t) = \vd^{\T} \nabla^{2} f(\vx + t\vd) \vd.    
    \end{align*}
    According to Assumptions~\ref{asm:strongly_convex} and~\ref{asm:grad_Lip}, we have
    \begin{equation}\label{eq:varphi_mu_L}
        \mu \Norm{\vd}^{2} \leq \varphi(t) \leq L \Norm{\vd}^{2}.
    \end{equation}
    Moreover, Assumption~\ref{asm:self_con} implies the derivative of $\varphi(t)$ satisfies
    \begin{equation*}
        \abs*{\diff{\varphi(t)}{t}} = \abs*{\nabla^{3} f(\vx + t\vd) \left[\vd, \vd, \vd \right]} \leq 2 M \left( \nabla^{2} f(\vx + t \vd) \left( \vd, \vd \right) \right)^{\frac{3}{2}} = 2 M  \varphi(t)^{\frac{3}{2}},
    \end{equation*}
    which implies 
    \begin{equation*}
        \abs*{\diff{\left( (\varphi(t))^{- \frac{1}{2}} \right)}{t}} = \frac{1}{2} \abs*{(\varphi(t))^{- \frac{3}{2}} \diff{\varphi(t)}{t}} \leq M.
    \end{equation*}
    It follows that
    \begin{equation*}
         -M \leq \diff{\left( \left(\varphi(t)\right)^{- \frac{1}{2}} \right)}{t} \leq M.
    \end{equation*} 
    Taking the integral on above inequality over $[0,t]$, we obtain 
    \begin{equation}\label{eq:varphi_inv_bound}
        (\varphi(0))^{- \frac{1}{2}} - M t \leq \varphi(t)^{- \frac{1}{2}} \leq \varphi(0)^{- \frac{1}{2}} + M t.
    \end{equation}
    Since we have $\varphi(0) = \lNorm{\vd}{\vx}^{2}$, rearranging the upper bound in \eqref{eq:varphi_inv_bound} leads to 
    \begin{equation}\label{eq:varphi_self_con_below}
        \varphi(t) \geq \frac{\lNorm{\vd}{\vx}^{2}}{(1 + M \lNorm{\vd}{\vx} t)^{2}}
    \end{equation}
    for all $t \geq 0$.
    Combining results of \cref{eq:varphi_mu_L,eq:varphi_self_con_below}, we have
    \begin{equation}\label{eq:_varphi_below}
        \varphi(t) \geq \max\left\{ \frac{\lNorm{\vd}{\vx}^{2}}{(1 + M \lNorm{\vd}{\vx} t)^{2}}, \mu \Norm{\vd}^{2} \right\}. 
    \end{equation}
    Note that in the case of $t\in[0,t^l]$, the lower bound of $\phi(t)$ shown in \eqref{eq:varphi_self_con_below} is tighter than the one in  \eqref{eq:varphi_mu_L}, which is because $\lNorm{\vd}{\vx}^{2} / (1 + M \lNorm{\vd}{\vx} t)^{2}$ is decreasing with respect to $t$ and $\lNorm{\vd}{\vx}^{2}/(1 + M \lNorm{\vd}{\vx} t^l)^{2} = \mu \Norm{\vd}^{2}$. 
    Therefore, \eqref{eq:_varphi_below} directly implies
    \begin{equation}\label{eq:varphi_below}
        \varphi(t) \geq \begin{cases}
            \dfrac{\lNorm{\vd}{\vx}^{2}}{(1 + M \lNorm{\vd}{\vx} t)^{2}}, & 0 \leq t \leq t^{l}, \\[0.2cm]
            \mu \Norm{\vd}^{2}, & t > t^{l}.
        \end{cases}
    \end{equation}
    Similarly, the lower bound in \eqref{eq:varphi_inv_bound} yields
    \begin{equation}\label{eq:varphi_self_con_upper}
        \varphi(t) \leq \frac{\lNorm{\vd}{\vx}^{2}}{(1 - M \lNorm{\vd}{\vx} t)^{2}}
    \end{equation}
    for all $t\in(0,1 / (M \lNorm{\vd}{\vx})$. 
    We then apply \eqref{eq:varphi_mu_L}, \eqref{eq:varphi_self_con_upper}, and the definition of $t^{u}$ to achieve
    \begin{equation}\label{eq:varphi_upper}
        \varphi(t) \leq \begin{cases}
            \dfrac{\lNorm{\vd}{\vx}^{2}}{(1 - M \lNorm{\vd}{\vx} t)^{2}}, & 0 \leq t \leq t^{u}, \\[0.2cm]
            L \Norm{\vd}^{2}, & t > t^{u}.
        \end{cases}
    \end{equation}
    Integrating \cref{eq:varphi_below,eq:varphi_upper}, we obtain \cref{eq:self_con_grad_lower_improved,eq:self_con_grad_upper_improved}, respectively. 
    Furthermore, we integrate \eqref{eq:self_con_grad_upper_improved} to yields \eqref{eq:self_con_f_upper_improved}.
\end{proof}

\subsection{Derivation of the Smoothness Aided Adaptive Size}\label{proof:ada_stepsize_improved}

In Section~\ref{sec:dsass}, we have claimed that our smoothness aided adaptive step size 
\begin{equation}\label{eq:ttk-appendix}
    \ttt_{k} = \begin{cases}
        \dfrac{\eta_{k}}{(1 + M \eta_{k}) \lNorm{\vd_{k}}{\vx_{k}}}, & (1 + M \eta_{k}) \alpha_{k} \leq 1, \\[0.3cm]
        \dfrac{M \eta_{k} \alpha_{k}^{2} + (1 - \alpha_{k})^{2}}{M \lNorm{\vd_{k}}{\vx_{k}}}, & (1 + M \eta_{k}) \alpha_{k} > 1,
    \end{cases}
\end{equation}
minimizes the right-hand side of \eqref{eq:self_con_f_upper_improved}, where $\eta_{k}$ and $\alpha_{k}$ follow the definitions in \cref{def:eta,def:alpha}, i.e.,
\begin{equation}\label{def:eta-appendix}
    \eta_{k} \defeq - \frac{\vg_{k}^{\T} \vd_{k}}{\lNorm{\vd_{k}}{\vx_{k}}},
\end{equation}
and 
\begin{equation}\label{def:alpha-appendix}
    \alpha_{k} \defeq \frac{\lNorm{\vd_{k}}{\vx_{k}}}{\sqrt{L} \Norm{\vd_{k}}}.
\end{equation}

Now, we formally present and prove this result.

\begin{prop}
We define
\begin{equation}\label{eq:tHkt}
\tH_k(t) \defeq \begin{cases}
    f(\vx_{k}) + t \vg_{k}^{\T} \vd_{k} + \dfrac{\omega_{*}(M t \lNorm{\vd_{k}}{\vx_{k}})}{M^{2}} , & 0 \leq t \leq t_{k}^{u}, \\[0.3cm]
    f(\vx_{k}) + t \vg_{k}^{\T} \vd_{k} + \dfrac{\omega_{*}(M t_{k}^{u} \lNorm{\vd_{k}}{\vx_{k}})}{M^{2}}  + \dfrac{t_{k}^{u} (t - t_{k}^{u}) \lNorm{\vd_{k}}{\vx_{k}}^{2} }{1 - M t_{k}^{u} \lNorm{\vd_{k}}{\vx_{k}}} + \dfrac{1}{2} L(t - t_{k}^{u})^{2} \Norm{\vd_{k}}^{2}, & t > t_{k}^{u},
\end{cases}
\end{equation}
where $t_{k}^{u}$ follows the definition in \eqref{eq:def_t_l_u}, i.e.,
\begin{equation}\label{def:_t_u-appendix}
    t_{k}^{u} \defeq \frac{1}{M \lNorm{\vd_{k}}{\vx_{k}}} \left( 1 - \frac{\lNorm{\vd_{k}}{\vx_{k}}}{\sqrt{L}\Norm{\vd_{k}}} \right).
\end{equation}
Then it holds
\begin{align}\label{eq:optimal-ttk}
    \ttt_k = \argmin_{t\geq 0} \tH_k(t),
\end{align}
where $\ttt_k$ follows the definition in \eqref{eq:ttk-appendix}.
\end{prop}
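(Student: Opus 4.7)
The plan is to show that $\tH_k$ is a continuously differentiable convex function on $[0,\infty)$, compute its first-order condition on each of the two pieces, and then verify by case analysis that the global minimizer matches $\ttt_k$ as given in \eqref{eq:ttk-appendix}. Since $\omega_*'(z) = z/(1-z)$ and $\omega_*''(z) = 1/(1-z)^2 > 0$ on $[0,1)$, the first piece is a convex function of $t$ on $[0, t_k^u]$, and the second piece is manifestly a convex quadratic in $t$. I would next check that the two pieces agree at $t = t_k^u$ (this is immediate from \eqref{eq:tHkt}) and that the one-sided derivatives also agree there, namely both equal $\vg_k^\T \vd_k + t_k^u \lNorm{\vd_k}{\vx_k}^2/(1 - Mt_k^u\lNorm{\vd_k}{\vx_k})$. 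Together these imply $\tH_k$ is $C^1$ and globally convex on $[0,\infty)$, so any critical point is the unique global minimizer.

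\textbf{First-order condition on $[0, t_k^u]$.} On the first piece, setting $\tH_k'(t) = 0$ and using $\omega_*'(z) = z/(1-z)$ together with the definition $\eta_k = -\vg_k^\T\vd_k/\lNorm{\vd_k}{\vx_k}$ from \eqref{def:eta-appendix} gives
\begin{equation*}
    \frac{t\lNorm{\vd_k}{\vx_k}^2}{1 - Mt\lNorm{\vd_k}{\vx_k}} = \eta_k \lNorm{\vd_k}{\vx_k},
\end{equation*}
whose unique solution is $t^* = \eta_k/((1+M\eta_k)\lNorm{\vd_k}{\vx_k})$, matching the first branch of $\ttt_k$. Using the identity $Mt_k^u \lNorm{\vd_k}{\vx_k} = 1-\alpha_k$ from \eqref{def:_t_u-appendix}, the admissibility condition $t^* \leq t_k^u$ is equivalent to $M\eta_k/(1+M\eta_k) \leq 1 - \alpha_k$, which rearranges exactly to $(1+M\eta_k)\alpha_k \leq 1$. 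Hence, in this regime, $t^*$ is the global minimizer.

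\textbf{First-order condition on $(t_k^u, \infty)$.} When $(1+M\eta_k)\alpha_k > 1$, Step~2 shows the first-piece critical point exceeds $t_k^u$, so by convexity the first piece is strictly decreasing on $[0, t_k^u]$, and the global minimizer lies in $(t_k^u, \infty)$. The derivative on the second piece is
\begin{equation*}
    \tH_k'(t) = \vg_k^\T \vd_k + \frac{t_k^u\lNorm{\vd_k}{\vx_k}^2}{1 - Mt_k^u\lNorm{\vd_k}{\vx_k}} + L(t - t_k^u)\Norm{\vd_k}^2.
\end{equation*}
Substituting $1 - Mt_k^u\lNorm{\vd_k}{\vx_k} = \alpha_k$ and $L\Norm{\vd_k}^2 = \lNorm{\vd_k}{\vx_k}^2/\alpha_k^2$ (both from \eqref{def:alpha-appendix} and \eqref{def:_t_u-appendix}), setting $\tH_k'(t) = 0$ yields
\begin{equation*}
    t = t_k^u + \frac{\alpha_k^2 \eta_k}{\lNorm{\vd_k}{\vx_k}} - \frac{\alpha_k(1-\alpha_k)}{M\lNorm{\vd_k}{\vx_k}} = \frac{(1-\alpha_k)^2 + M\eta_k\alpha_k^2}{M\lNorm{\vd_k}{\vx_k}},
\end{equation*}
which matches the second branch of $\ttt_k$.

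\textbf{Main obstacle.} Conceptually the argument is routine because $\tH_k$ is convex and $C^1$, so it reduces to solving $\tH_k'(t)=0$ in each regime. The only mildly delicate point is the algebraic simplification in Step~3: without the substitutions $1 - Mt_k^u\lNorm{\vd_k}{\vx_k} = \alpha_k$ and $L\Norm{\vd_k}^2 = \lNorm{\vd_k}{\vx_k}^2/\alpha_k^2$, the expression for the critical point on the second piece does not collapse into the compact closed form that defines $\ttt_k$. Once these identities are invoked, the two branches of the first-order condition assemble exactly into the piecewise formula \eqref{eq:ttk-appendix}, establishing \eqref{eq:optimal-ttk}.
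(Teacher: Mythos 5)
Your proof is correct and follows essentially the same route as the paper's: establish convexity of $\tH_k$ via its second derivative and then use the first-order condition together with the identities $M t_k^u \lNorm{\vd_k}{\vx_k} = 1 - \alpha_k$ and $L\Norm{\vd_k}^2 = \lNorm{\vd_k}{\vx_k}^2/\alpha_k^2$. The only cosmetic difference is that you solve $\tH_k'(t)=0$ to derive $\ttt_k$, whereas the paper substitutes the given $\ttt_k$ and verifies $\tH_k'(\ttt_k)=0$; the underlying algebra is identical.
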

\begin{proof}   
Based on the definition of $\tH_k(t)$ in \eqref{eq:tHkt}, we can verify its derivative holds 
\begin{equation}\label{eq:tHkt_first_order}
     \tH'_k(t) = \begin{cases}
        \vg_{k}^{\T} \vd_{k} + \dfrac{\lNorm{\vd_{k}}{\vx_{k}}^{2} t}{1 - M \lNorm{\vd_{k}}{\vx_{k}} t}, & 0 \leq t \leq t_{k}^{u}, \\[0.25cm]
        \vg_{k}^{\T} \vd_{k} + \dfrac{\lNorm{\vd_{k}}{\vx_{k}}^{2} t_{k}^{u}}{1 - M \lNorm{\vd_{k}}{\vx_{k}} t_{k}^{u}} + (t - t_{k}^{u}) L \Norm{\vd_{k}}^{2}, & t > t_{k}^{u}.
    \end{cases}
\end{equation}
Recall that the step size 
\begin{equation}\label{eq:tk-appendix}
    t_{k} = \frac{\eta_{k}}{\lNorm{\vd_{k}}{\vx_{k}} (1 + M \eta_{k})}
\end{equation}
for adaptive BFGS~(Algorithm~\ref{alg:ada_bfgs}) corresponds to the minimizer of
\begin{align*}
    H_k(t)= f(\vx_{k}) + t \vg_{k}^{\T} \vd_{k} + \dfrac{\omega_{*}(M t \lNorm{\vd_{k}}{\vx_{k}})}{M^{2}}, 
\end{align*}
which holds that
\begin{equation}\label{eq:Hkt_first_order} 
    H'_k(t) = \frac{\lNorm{\vd_{k}}{\vx_{k}}^{2} t}{1 - M \lNorm{\vd_{k}}{\vx_{k}} t} 
\end{equation}
for $t\in [0, 1/ (M \lNorm{\vd_{k}}{\vx_{k}}))$.

Now, we prove that the point $\ttt_k$ is the minimizer of $\tH_k(t)$ by showing $\tH_k(t)$ is strictly convex and $\tH'_k(\ttt_k)=0$.

\paragraph{The convexity of $\tH_k$}
We can verify that the second derivative of $\tH_{k}(t)$ is given by
\begin{equation}\label{eq:tHkt_second_order}
    \tH''_{k}(t) = \begin{cases}
        \dfrac{\lNorm{\vd_{k}}{\vx_{k}}^{2}}{(1 - M \lNorm{\vd_{k}}{\vx_{k}} t)^{2}}, & 0 \leq t \leq t_{k}^{u}, \\[0.2cm]
        L \Norm{\vd}_{k}^{2}, & t > t_{k}^{u},
    \end{cases}
\end{equation}
which implies $\tH''_{k}(t) > 0$ for all $u \in [0, +\infty)$.
Therefore, the function $\tH_{k}(t)$ is convex on $[0, +\infty)$.

\paragraph{The derivative of $\tH_k(\cdot)$ at $\ttt_k$}

According to \cref{def:alpha-appendix,def:_t_u-appendix}, we can simplify $t_{k}^{u}$ as 
\begin{equation}\label{def:t_u-appendix}
    t_{k}^{u} = \frac{1 - \alpha_{k}}{M \lNorm{\vd_{k}}{\vx_{k}}}.
\end{equation}
We now analyze two cases in the definition of $\ttt_k$ in \eqref{eq:ttk-appendix} as follows.
\begin{enumerate}
    \item In the case of $(1 + M \eta_{k}) \alpha_{k} \leq 1$, we have
        \begin{equation}\label{eq:tk_l_ku}
        \begin{aligned}
            \ttt_{k} & ~\overset{\mathclap{\eqref{eq:ttk-appendix}}}{=}~ \frac{1}{M \lNorm{\vd_{k}}{\vx_{k}}} \frac{M \eta_{k}}{1 + M \eta_{k}} \\
            & ~=~ \frac{1}{M \lNorm{\vd_{k}}{\vx_{k}}} \left( 1 - \frac{1}{1 + M \eta_{k}} \right) \\
            & ~\leq~ \frac{1 - \alpha_{k}}{M \lNorm{\vd_{k}}{\vx_{k}}} \\
            & ~\overset{\mathclap{\eqref{def:t_u-appendix}}}{=}~ t_{k}^{u},
        \end{aligned}
        \end{equation}
        where the the inequality use the condition that $(1 + M \eta_{k}) \alpha_{k} \leq 1$.
        Therefore, substituting the above expression of $\ttt_{k}$ into \eqref{eq:tHkt_first_order} yields
        \begin{equation*}
        \begin{aligned}
            \tH'_{k}(\ttt_{k}) & ~\overset{\mathclap{\eqref{eq:tHkt_first_order}}}{=}~ \vg_{k}^{\T} \vd_{k} + \frac{\lNorm{\vd_{k}}{\vx_{k}}^{2} \ttt_{k}}{1 - M \lNorm{\vd_{k}}{\vx_{k}} \ttt_{k}} \\
            & ~\overset{\mathclap{\eqref{eq:ttk-appendix}}}{=}~ \vg_{k}^{\T} \vd_{k} + \frac{\lNorm{\vd_{k}}{\vx_{k}} \eta_{k} / (1 + M \eta_{k})}{1 - M \eta_{k} / (1 + M \eta_{k})} \\
            & ~=~ \vg_{k}^{\T} \vd_{k} + \lNorm{\vd_{k}}{\vx_{k}} \eta_{k} \\
            & ~\overset{\mathclap{\eqref{def:eta-appendix}}}{=}~ 0,
        \end{aligned}
        \end{equation*}
        where the first equality relies on the \eqref{eq:tk_l_ku}.

    \item In the case of $(1 + M \eta_{k}) \alpha_{k} > 1$, we have
        \begin{equation*}
        \begin{aligned}
            \ttt_{k} & ~\overset{\mathclap{\eqref{eq:ttk-appendix}}}{=}~ \frac{M \eta_{k} \alpha_{k}^{2} + (1 - \alpha_{k})^{2}}{M \lNorm{\vd_{k}}{\vx_{k}}}
            = \frac{(1 + M \eta_{k}) \alpha_{k}^{2} + 1 - 2 \alpha_{k}}{M \lNorm{\vd_{k}}{\vx_{k}}} \\
            & ~>~ \frac{\alpha_{k} + 1 - 2 \alpha_{k}}{M \lNorm{\vd_{k}}{\vx_{k}}}
            = \frac{1 - \alpha_{k}}{M \lNorm{\vd_{k}}{\vx_{k}}} 
            \overset{\eqref{def:t_u-appendix}}{=} t_{k}^{u},
        \end{aligned}
        \end{equation*}
        where the inequality use the condition that $(1 + M \eta_{k}) \alpha_{k} >  1$.
        Substituting the above expression of $\ttt_{k}$ into \eqref{eq:tHkt_first_order}, we obtain
        \begin{equation*}
        \begin{aligned}
            \tH'_{k}(\ttt_{k}) & ~\overset{\mathclap{\eqref{eq:tHkt_first_order}}}{=}~ \vg_{k}^{\T} \vd_{k} + \dfrac{\lNorm{\vd_{k}}{\vx_{k}}^{2} t_{k}^{u}}{1 - M \lNorm{\vd_{k}}{\vx_{k}} t_{k}^{u}} + (\ttt_{k} - t_{k}^{u}) L \Norm{\vd_{k}}^{2} \\
            & ~\overset{\mathclap{\eqref{eq:ttk-appendix}}}{=}~ \vg_{k}^{\T} \vd_{k} + \frac{(1 - \alpha_{k}) \lNorm{\vd_{k}}{\vx_{k}} / M}{1 - (1 - \alpha_{k})} + \frac{M \eta_{k} \alpha_{k}^{2} + (1 - \alpha_{k})^{2} - (1 - \alpha_{k})}{M \lNorm{\vd_{k}}{\vx_{k}}} L \Norm{\vd_{k}}^{2} \\
            & ~=~ \vg_{k}^{\T} \vd_{k} + \frac{\lNorm{\vd_{k}}{\vx_{k}}}{M} \left( \frac{1}{\alpha_{k}} - 1 \right) + \frac{\lNorm{\vd_{k}}{\vx_{k}}}{M} \frac{L \Norm{\vd_{k}}^{2}}{ \lNorm{\vd_{k}}{\vx_{k}}} \left( (1 + M \eta_{k}) \alpha_{k}^{2} - \alpha_{k} \right) \\
            & ~\overset{\mathclap{\eqref{def:alpha-appendix}}}{=}~ \vg_{k}^{\T} \vd_{k} + \frac{\lNorm{\vd_{k}}{\vx_{k}}}{M} \left( \frac{1}{\alpha_{k}} - 1 \right) + \frac{\lNorm{\vd_{k}}{\vx_{k}}}{M} \frac{(1 + M \eta_{k}) \alpha_{k}^{2} - \alpha_{k}}{\alpha_{k}^{2}} \\
            & ~=~ \vg_{k}^{\T} \vd_{k} + \frac{\lNorm{\vd_{k}}{\vx_{k}}}{M} \left( \frac{1}{\alpha_{k}} - 1 + 1 + M \eta_{k} - \frac{1}{\alpha_{k}} \right) \\
            & ~=~ \vg_{k}^{\T} \vd_{k} + \lNorm{\vd_{k}}{\vx_{k}} \eta_{k} \\
            & ~\overset{\mathclap{\eqref{def:eta-appendix}}}{=}~ 0,
        \end{aligned}
        \end{equation*}
        where the first equality use the condition that $\ttt_{k} > t_{k}^{u}$.
\end{enumerate}
Combining above two cases, we conclude that $\tH'_{k}(\ttt_{k}) = 0$. Thus we complete the proof.
\end{proof}

\subsection{Proof of Lemma~\ref{lem:ada_bfgs_improved_properties}}\label{proof:ada_bfgs_improved_properties}

For the proof of Lemma~\ref{lem:ada_bfgs_improved_properties}, we first introduce two auxiliary functions 
\begin{equation*}
    h_{k}(t) \defeq f(\vx_{k}) + t \vg_{k}^{\T} \vd_{k} + \frac{\omega( M t \lNorm{\vd_{k}}{\vx_{k}})}{M^{2}}, 
\end{equation*}
and
\begin{equation*}
    \thh_{k}(t) \defeq \begin{cases}
        f(\vx_{k}) + t \vg_{k}^{\T} \vd_{k} + \dfrac{\omega( M t \lNorm{\vd_{k}}{\vx_{k}})}{M^{2}} , & 0 \leq t \leq t_{k}^{l}, \\[0.3cm]
        f(\vx_{k}) + t \vg_{k}^{\T} \vd_{k} + \dfrac{\omega( M t \lNorm{\vd_{k}}{\vx_{k}})}{M^{2}} + \dfrac{t_{k}^{l} (t - t_{k}^{l}) \lNorm{\vd}{\vx}^{2} }{1 + M t_{k}^{l} \lNorm{\vd_{k}}{\vx_{k}}} + \dfrac{1}{2} \mu(t - t_{k}^{l})^{2} \Norm{\vd}^{2}, & t > t_{k}^{l},
    \end{cases}
\end{equation*}
where $t_{k}^{l}$ follows the definition in \eqref{eq:def_t_l_u}.
We can verify that the derivatives of $h_{k}(t)$ and $\thh_{k}(t)$ are
\begin{equation}\label{eq:hkt_first_order}
    h'_{k}(t) = \vg_{k}^{\T}\vd_{k} + \frac{t \lNorm{\vd_{k}}{\vx_{k}}^{2}}{1 + M t\lNorm{\vd_{k}}{\vx_{k}}}, 
\end{equation}
and
\begin{equation}\label{eq:thkt_first_order}
    \thh'_{k}(t) = \begin{cases}
        \vg_{k}^{\T}\vd_{k} + \dfrac{t \lNorm{\vd_{k}}{\vx_{k}}^{2}}{1 + M t\lNorm{\vd_{k}}{\vx_{k}}}, & 0 \leq t \leq t_{k}^{l}, \\[0.3cm]
        \vg_{k}^{\T}\vd_{k}  + \dfrac{t^{l} \lNorm{\vd_{k}}{\vx_{k}}^{2} }{1 + M t^{l} \lNorm{\vd_{k}}{\vx_{k}}} + \mu (t - t_{k}^{l}) \Norm{\vd_{k}}^{2}, & t > t_{k}^{l}, 
    \end{cases}
\end{equation}
respectively.

We then provide properties for functions $h'_k(t)$, $\thh'_k(t)$, $H'_k(t)$, and $\tH'_k(t)$ as follows.

\begin{lem}\label{lem:ada_bfgs_improved_properties_inter}
    The functions $h'_{k}(t)$ and $\thh'_{k}(t)$ defined in \cref{eq:hkt_first_order,eq:thkt_first_order} hold
    \begin{align}
        & \thh'_{k}(t) \geq \max\{h'_{k}(t), \vg_{k}^{\T} \vd_{k} + t \mu \Norm{\vd_{k}}^{2}\} \label{eq:aux_h_ineq_1} 
    \end{align}
    for all $t \in [0, +\infty)$.
    Additionally, the functions $H'_{k}(t)$ and defined in \cref{eq:Hkt_first_order,eq:tHkt_first_order} hold
    \begin{equation}\label{eq:aux_H_ineq_1}
        \tH'_{k}(t) \leq H'_{k}(t) \quad \text{for all } t \in \left[0, \frac{1}{M \lNorm{\vd_{k}}{\vx_{k}}}\right), 
    \end{equation}
    and
    \begin{equation}\label{eq:aux_H_ineq_2} 
        \tH'_{k}(t) \leq \vg_{k}^{\T} \vd_{k} + t L \Norm{\vd_{k}}^{2} \quad \text{for all } t \in [0, +\infty).
    \end{equation}   
    Finally, the step sizes $t_{k}$ and $\ttt_{k}$ defined in \cref{eq:tk-appendix,eq:ttk-appendix} hold
    \begin{align}
        \tH'_{k}(\ttt_{k}) = H'_{k}(t_{k}) = 0, \label{eq:def_t}
    \end{align}
    and
    \begin{align}
        \ttt_{k} \geq t_{k}. \label{eq:t_ineq}
    \end{align}
\end{lem}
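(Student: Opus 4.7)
The plan is to handle the five assertions in a natural order, building from simple observations about the piecewise-defined derivatives $h'_k, \thh'_k, H'_k, \tH'_k$ toward the comparison $\ttt_k \geq t_k$.

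First I would verify \eqref{eq:aux_h_ineq_1}. On $[0, t_k^l]$ the definitions give $\thh'_k(t) = h'_k(t) = \vg_k^{\T}\vd_k + t\lNorm{\vd_k}{\vx_k}^2/(1 + Mt\lNorm{\vd_k}{\vx_k})$, so the first inequality is trivial, and the second reduces to $\lNorm{\vd_k}{\vx_k}^2/(1 + Mt\lNorm{\vd_k}{\vx_k}) \geq \mu\Norm{\vd_k}^2$, which holds because the left side is decreasing in $t$ and (from the definition of $t_k^l$ in \eqref{eq:def_t_l_u}) equals $\mu\Norm{\vd_k}^2$ precisely at $t = t_k^l$. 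On $(t_k^l, +\infty)$, I would compare the two sides by differentiating in $t$: the right-hand side $h'_k(t)$ has slope $\lNorm{\vd_k}{\vx_k}^2/(1 + Mt\lNorm{\vd_k}{\vx_k})^2$, which is smaller than the constant slope $\mu\Norm{\vd_k}^2$ of $\thh'_k$ on this interval (the two derivatives agree at $t = t_k^l$), so the inequality propagates. The bound $\thh'_k(t) \geq \vg_k^{\T}\vd_k + t\mu\Norm{\vd_k}^2$ on $(t_k^l, +\infty)$ follows from the first one evaluated at $t_k^l$ together with the exact slope $\mu\Norm{\vd_k}^2$ of $\thh'_k$ on this interval.

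For \eqref{eq:aux_H_ineq_1} I would use an entirely parallel argument on the upper side. On $[0, t_k^u]$ the two derivatives agree by definition. At $t = t_k^u$ the definition \eqref{eq:def_t_l_u} gives $1 - Mt_k^u\lNorm{\vd_k}{\vx_k} = \alpha_k$, whence $\lNorm{\vd_k}{\vx_k}^2/(1 - Mt_k^u\lNorm{\vd_k}{\vx_k})^2 = L\Norm{\vd_k}^2$; thus $H''_k(t_k^u) = \tH''_k(t_k^u) = L\Norm{\vd_k}^2$. On $(t_k^u, 1/(M\lNorm{\vd_k}{\vx_k}))$, $H'_k$ has strictly increasing slope $\lNorm{\vd_k}{\vx_k}^2/(1 - Mt\lNorm{\vd_k}{\vx_k})^2$ (exceeding $L\Norm{\vd_k}^2$), while $\tH'_k$ has constant slope $L\Norm{\vd_k}^2$, which yields \eqref{eq:aux_H_ineq_1}. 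For \eqref{eq:aux_H_ineq_2} I would split on $t \leq t_k^u$ versus $t > t_k^u$. On the first region it suffices to show $\lNorm{\vd_k}{\vx_k}^2/(1 - Mt\lNorm{\vd_k}{\vx_k}) \leq L\Norm{\vd_k}^2$, i.e. $1 - Mt\lNorm{\vd_k}{\vx_k} \geq \alpha_k^2$; since $1 - Mt\lNorm{\vd_k}{\vx_k} \geq \alpha_k \geq \alpha_k^2$ (using $\alpha_k \in (0,1]$, which follows from Assumption~\ref{asm:grad_Lip}), we are done. On the second region the same computation at $t = t_k^u$ gives $\tH'_k(t_k^u) = \vg_k^{\T}\vd_k + t_k^u\alpha_k L\Norm{\vd_k}^2 \leq \vg_k^{\T}\vd_k + t_k^u L\Norm{\vd_k}^2$, and extending linearly with slope $L\Norm{\vd_k}^2$ gives the claim.

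Finally, \eqref{eq:def_t} is already effectively established: the previous subsection proved that $\ttt_k$ minimizes the strictly convex function $\tH_k$ (second derivatives given by \eqref{eq:tHkt_second_order} are strictly positive), and the classical derivation recalled around \eqref{eq:tk-appendix} shows that $t_k$ is the unique minimizer of the strictly convex $H_k$, so both first-order conditions hold. For \eqref{eq:t_ineq}, I would verify that $t_k \in [0, 1/(M\lNorm{\vd_k}{\vx_k}))$ by writing $t_k = (1/(M\lNorm{\vd_k}{\vx_k})) \cdot M\eta_k/(1 + M\eta_k)$, then apply \eqref{eq:aux_H_ineq_1} to obtain $\tH'_k(t_k) \leq H'_k(t_k) = 0 = \tH'_k(\ttt_k)$; the strict monotonicity of $\tH'_k$ (from strict convexity of $\tH_k$) gives $t_k \leq \ttt_k$. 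None of the steps poses a real obstacle; the main care is bookkeeping the piecewise definitions across $t_k^l$ and $t_k^u$, and checking that derivatives match continuously at the breakpoints so that the comparison arguments can be propagated cleanly.
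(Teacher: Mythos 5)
Your proposal is correct and follows essentially the same route as the paper: both arguments compare the piecewise second derivatives of the auxiliary functions, use the breakpoint identities from \eqref{eq:def_t_l_u} to match them at $t_{k}^{l}$ and $t_{k}^{u}$ (the paper phrases this as $\thh''_{k}(t)=\max\{h''_{k}(t),\mu\Norm{\vd_{k}}^{2}\}$ and integrates, while you compare slopes on each piece, which is the same mechanism), and then deduce \eqref{eq:t_ineq} exactly as the paper does from $\tH'_{k}(t_{k})\leq H'_{k}(t_{k})=0=\tH'_{k}(\ttt_{k})$ together with the strict monotonicity of $\tH'_{k}$. One cosmetic slip that does not affect validity: on $[0,t_{k}^{l}]$ the quantity $\lNorm{\vd_{k}}{\vx_{k}}^{2}/(1+Mt\lNorm{\vd_{k}}{\vx_{k}})$ does not equal $\mu\Norm{\vd_{k}}^{2}$ at $t=t_{k}^{l}$ (exact equality holds only with the squared denominator, i.e.\ for the second derivative); its value there is $\sqrt{\mu}\Norm{\vd_{k}}\lNorm{\vd_{k}}{\vx_{k}}\geq\mu\Norm{\vd_{k}}^{2}$, so the inequality you need still holds.
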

\begin{proof}
    We can verify that the second derivatives of $h_{k}(t)$ and $\thh_{k}(t)$ have the form of
    \begin{equation}\label{eq:hkt_second_order}
        h''_{k}(t) = \frac{\lNorm{\vd_{k}}{\vx_{k}}^{2}}{(1 + M t \lNorm{\vd_{k}}{\vx_{k}})^{2}},
    \end{equation}
    and
    \begin{equation}\label{eq:thkt_second_order}
        \thh''_{k}(t) = \begin{cases}
            \dfrac{\lNorm{\vd_{k}}{\vx_{k}}^{2}}{(1 + M \lNorm{\vd_{k}}{\vx_{k}} t)^{2}}, & 0 \leq t \leq t_{k}^{l}, \\[0.25cm]
            \mu \Norm{\vd_{k}}^{2}, & t > t_{k}^{l}.
        \end{cases}
    \end{equation}
    Since $h''_{k}(t)$ is strictly decreasing on $[0, +\infty)$ and $\lNorm{\vd}{\vx}^{2} / (1 + M \lNorm{\vd}{\vx} t_{k}^{l})^{2} = \mu \Norm{\vd}^{2}$, we can write $\thh''_{k}(t)$ as
    \begin{equation}\label{eq:_thkt_second_order}
        \thh''_{k}(t) = \max\left\{ h''_{k}(t), \mu \Norm{\vd_{k}}^{2} \right\},
    \end{equation}
    which implies
    \begin{align*}
        \thh'_{k}(t) 
        = \thh'_{k}(0) + \int_{0}^{t} \thh''_{k}(s) \, \mathrm{d} s   
        = \vg_{k}^{\T} \vd_{k} + \int_{0}^{t} \thh''_{k}(s) \, \mathrm{d} s 
        \overset{\eqref{eq:_thkt_second_order}}{\geq} & \vg_{k}^{\T} \vd_{k} + \int_{0}^{t} h''_{k}(s) \, \mathrm{d} s 
        = \thh'_{k}(0) + \int_{0}^{t} h''_{k}(s) \, \mathrm{d} s 
        = h'_{k}(t),
    \end{align*}
    and
    \begin{align*}
        \thh'_{k}(t) 
        = \thh'_{k}(0) + \int_{0}^{t} \thh''_{k}(s) \, \mathrm{d} s 
        = \vg_{k}^{\T} \vd_{k} + \int_{0}^{t} \thh''_{k}(s) \, \mathrm{d} s 
        \overset{\eqref{eq:_thkt_second_order}}{\geq}  \vg_{k}^{\T} \vd_{k} + \int_{0}^{t} \mu \Norm{\vd_{k}}^{2} \, \mathrm{d} s 
        = \vg_{k}^{\T} \vd_{k} + t \mu \Norm{\vd_{k}}^{2}.
    \end{align*}
    Combining above two inequalities leads to \eqref{eq:aux_h_ineq_1}.
    The results of \cref{eq:aux_H_ineq_1,eq:aux_H_ineq_2} can be achieved by a similar derivation to \eqref{eq:aux_h_ineq_1}, so that we omit the details.

    Recall that Section~\ref{proof:ada_stepsize_improved} shows that $\ttt_{k}$ is the minimizer of $\tH_{k}(t)$ and \citet{gaoQuasiNewtonMethodsSuperlinear2019} shows that $t_{k}$ is the minimizer of $H_{k}(t)$. 
    Therefore, we have $H'_{k}(t_{k}) = 0$ and $\tH'_{k}(\ttt_0) = 0$.

    Furthermore, combining \cref{eq:aux_H_ineq_1,eq:def_t}, we have 
    \begin{equation*}
        \tH'_{k}(\ttt_{k}) \overset{\eqref{eq:def_t}}{=} H'_{k}(t_{k}) \overset{\eqref{eq:aux_H_ineq_1}}{\geq} \tH'_{k}(t_{k}),
    \end{equation*}
    where the second inequality is due to $t_{k} \in [0, 1 / (M \lNorm{\vd_{k}}{\vx_{k}}))$. 
    Thus, the strictly increasing property of $\tH'_{k}(t)$ implies $\ttt_{k} \geq t_{k}$.
\end{proof}

We now prove Lemma~\ref{lem:ada_bfgs_improved_properties} as follows.

\begin{proof}
    According to Lemma~\ref{lem:self_con_bound_improved} with $\vx = \vx_{k}$ and $\vd = \vd_{k}$ and Lemma~\ref{lem:ada_bfgs_improved_properties_inter}, we have
    \begin{equation}\label{eq:curvature-1-appendix}
        \vg_{k + 1}^{\T} \vd_{k} = \nabla f(\vx_{k} + \ttt_{k} \vd_{k}) ^{\T} \vd_{k} \overset{\eqref{eq:self_con_grad_lower_improved}, \, \eqref{eq:thkt_first_order}}{\geq} \thh'_{k}(\ttt_{k}) \overset{\eqref{eq:aux_h_ineq_1}}{\geq} h'_{k}(\ttt_{k}) \overset{\eqref{eq:t_ineq}}{\geq} h'_{k}(t_{k}) = \frac{2 M \eta_{k}}{1 + 2 M \eta_{k}} \vg_{k}^{\T} \vd_{k},
    \end{equation}
    where the last inequality is based on the strictly increasing property of $\thh'_k(t)$.
    
    Combining \eqref{eq:def_t} and \eqref{eq:aux_H_ineq_2}, we have
    \begin{equation*}
        \vg_{k}^{\T} \vd_{k} + \ttt_{k} L \Norm{\vd_{k}}^{2} \geq \tH'_{k}(\ttt_{k}) = 0,
    \end{equation*}
    which implies
    \begin{equation}\label{eq:t_ineq_2}
        \ttt_{k} \geq - \vg_{k}^{\T} \vd_{k} / (L \Norm{\vd_{k}}^{2}).
    \end{equation}
    Again using Lemma~\ref{lem:self_con_bound_improved} with $\vx = \vx_{k}$ and $\vd = \vd_{k}$ and Lemma~\ref{lem:ada_bfgs_improved_properties_inter}, we have
    \begin{equation}\label{eq:curvature-2-appendix}
    \begin{aligned}
        \vg_{k + 1}^{\T} \vd_{k} & ~=~ \nabla f(\vx_{k} + \ttt_{k} \vd_{k}) ^{\T} \vd_{k} \overset{\eqref{eq:self_con_grad_lower_improved}, \, \eqref{eq:thkt_first_order}}{\geq} \thh'_{k}(\ttt_{k}) \\
        & ~\overset{\mathclap{\eqref{eq:aux_h_ineq_1}}}{\geq}~  \vg_{k}^{\T} \vd_{k} + \ttt_{k} \mu \Norm{\vd_{k}}^{2} \overset{\eqref{eq:t_ineq_2}}{\geq} \vg_{k}^{\T} \vd_{k} + \frac{- \vg_{k}^{\T} \vd_{k}}{L \Norm{\vd_{k}}^{2}} \mu \Norm{\vd_{k}}^{2} = \left(1 - \frac{1}{\varkappa}\right) \vg_{k}^{\T} \vd_{k}.
    \end{aligned}
    \end{equation}
    Combining \cref{eq:curvature-1-appendix,eq:curvature-2-appendix}, we achieve \eqref{eq:curvature_condition_improved}.

    According to Lemma~\ref{lem:self_con_bound_improved} with $\vx = \vx_{k}$ and $\vd = \vd_{k}$, Lemma~\ref{lem:ada_bfgs_improved_properties_inter}, and the definition of $\tH_{k}$ in \eqref{eq:tHkt}, we have
    \begin{equation}
    \begin{aligned}
        f(\vx_{k + 1}) & ~=~  f(\vx_{k} + \ttt_{k} \vd_{k})
        \overset{\eqref{eq:self_con_f_upper_improved}, \, \eqref{eq:tHkt}}{\leq} \tH_{k}(\ttt_{k}) 
        = \tH_{k}(0) + \int_{0}^{\ttt_{k}} \tH_{k}'(s) \, \mathrm{d} s \\
        & ~=~  f(\vx_{k}) + \int_{0}^{\ttt_{k}} \tH_{k}'(s) \, \mathrm{d} s 
        \overset{\eqref{eq:t_ineq}}{\leq}  f(\vx_{k}) + \int_{0}^{t_{k}} \tH_{k}'(s) \, \mathrm{d} s \\
        & ~\overset{\mathclap{\eqref{eq:aux_H_ineq_1}}}{\leq}~ f(\vx_{k}) + \int_{0}^{t_{k}} H_{k}'(s) \, \mathrm{d} s = H_{k}(0) +  \int_{0}^{t_{k}} H_{k}'(s) \, \mathrm{d} s = H_{k}(t_{k}) \\
        & ~=~ f(\vx_{k}) - \frac{\omega(M \eta_{k})}{M^{2}},
    \end{aligned}
    \end{equation}
    where the second inequality holds because $\tH_{k}'(t)$ is negative for all $t \in [0, \ttt_{k})$ and $\ttt_{k} \geq t_{k}$.
    Hence, we have proved \eqref{eq:self_con_f_dec_improved}.

    To prove \eqref{eq:armijo_condition_improved}, we first demonstrate that $\tH'_{k}(t)$ is convex on the interval $[0, +\infty)$. It suffices to show that for all $s_{1}, s_{2} \in [0, +\infty)$ with $s_{1} \neq s_{2}$~\cite[Theorem~2.1.3]{nesterovLecturesConvexOptimization2018}, we have
    \begin{equation*}
        \bigl(\tH''_{k}(s_{2}) - \tH''_{k}(s_{1})\bigr) (s_{2} - s_{1}) \geq 0.
    \end{equation*}
    Therefore, we only need to prove that $\tH''_{k}(t)$ is increasing on $[0, +\infty)$, which can be directly verified by \eqref{eq:tHkt_second_order}.
    Therefore, we have
    \begin{equation*}
    \begin{aligned}
        f(\vx_{k + 1}) & = f(\vx_{k} + \ttt_{k} \vd_{k})
        \overset{\eqref{eq:self_con_f_upper_improved}, \, \eqref{eq:tHkt}}{\leq} \tH_{k}(\ttt_{k})
        = f(\vx_{k}) + \int_{0}^{\ttt_{k}} \tH'(s) \, \mathrm{d} s \\
        & = f(\vx_{k}) + \int_{0}^{\ttt_{k}} \tH'(s) \, \mathrm{d} s 
        = f(\vx_{k}) + \ttt_{k} \int_{0}^{1} \tH'(\tau \ttt_{k}) \, \mathrm{d} \tau \\
        & \leq f(\vx_{k}) + \ttt_{k} \int_{0}^{1} \bigl((1 - \tau) \tH'(0) + \tau \tH'(\ttt_{k})\bigr) \, \mathrm{d} \tau \\
        & = f(\vx_{k}) + \frac{1}{2} \ttt_{k} \bigl(\tH'(0) + \tH'(\ttt_{k})\bigr) \\
        & = f(\vx_{k}) + \frac{1}{2} \ttt_{k}\vg_{k}^{\T} \vd_{k} \\
        & = f(\vx_{k}) + \frac{1}{2} \vg_{k}^{\T} (\vx_{k + 1} - \vx_{k}),
    \end{aligned}
    \end{equation*}
    where the fourth equality holds by the variable substitution $s = \tau \ttt_{k}$, the second inequality is based on the convexity of $\tH'_{k}(t)$, 
    and the sixth equality uses the facts that $\tH'_{k}(0) = \vg_{k}^{\T} \vd_{k}$ and $\tH'_{k}(\ttt_{k}) = 0$. 
    This proves \eqref{eq:armijo_condition_improved}. 
\end{proof}

\subsection{The Upper Bound of Constant \texorpdfstring{$E$}{E} in the Proof of Theorem~\ref{thm:linear_phase_linear_rates_2_improved}}\label{proof:constant_E}

Based on the definition in \eqref{def:E}, we have
\begin{equation*}
    E = \int_{0}^{+\infty} \frac{\ln (1 + y)}{y(1 + y)} \, \mathrm{d} y = \underbrace{\int_{0}^{1} \frac{\ln (1 + y)}{y(1 + y)} \, \mathrm{d} y}_{E_1} + \underbrace{\int_{1}^{+\infty} \frac{\ln (1 + y)}{y(1 + y)} \, \mathrm{d} y}_{E_2}.
\end{equation*}
For the term $E_1$, we have
\begin{equation*}
    E_1 = \int_{0}^{1} \frac{\ln (1 + y)}{y(1 + y)} \, \mathrm{d} y \leq \int_{0}^{1} \frac{1}{1 + y} \, \mathrm{d} y = \ln 2,
\end{equation*}
where we use the fact that $\ln(1 + y) < y$ for all $y > 0$.

For the term $E_{2}$, we first show that 
\begin{align}\label{eq:lny}
\ln (1 + y) \leq \sqrt{y}     
\end{align}
for all $y \geq 1$. This can be achieved by
\begin{equation*}
    (\ln (1 + y) - \sqrt{y}) \bigr|_{y = 1} = \ln 2 - 1 < 0,
\end{equation*}
and
\begin{equation*}
    \diff{\left( \ln(1 + y) - \sqrt{y} \right)}{y} = \frac{1}{1 + y} - \frac{1}{2 \sqrt{y}} = - \frac{(\sqrt{y} - 1)^{2}}{2 \sqrt{y} (1 + y)} \leq 0
\end{equation*}
for all $y \geq 1$. 
Therefore, we have
\begin{equation*}
    E_{2} = \int_{1}^{+\infty} \frac{\ln (1 + y)}{y(1 + y)} \, \mathrm{d} y \overset{\eqref{eq:lny}}{\leq} \int_{1}^{+\infty} \frac{1}{\sqrt{y}(1 + y)} \, \mathrm{d} y = 2 \int_{1}^{+\infty} \frac{1}{1 + y} \mathrm{d} (\sqrt{y}) = 2 (\arctan{\sqrt{y}}) \bigr|_{y = 1}^{+\infty} = \frac{\pi}{2}.
\end{equation*}
Combining the above upper bounds for $E_1$ and $E_2$, we obtain $E \leq \ln 2 + \pi / 2 < 3$.

\bibliographystyle{plainnat}
\bibliography{reference}
\end{document}